 \numberwithin{equation}{section}
\newtheorem{theorem}{Theorem}[section]
\newtheorem{lemma}[theorem]{Lemma}
\newtheorem{proposition}[theorem]{Proposition}
\newtheorem{question}[theorem]{Question}
\newtheorem{example}[theorem]{Example}
\newtheorem{remark}[theorem]{Remark}
\theoremstyle{definition}
\newtheorem{definition}[theorem]{Definition}
\newtheorem*{BA}{Basic assumption}  %own
\newtheorem*{TheoremA}{Theorem A}
\newtheorem*{TheoremB}{Theorem B}
\newtheorem*{TheoremC}{Theorem C}
\def\id{{\rm id}}
\title[Subsystems with shadowing property for $\mathbb{Z}^{k}$-actions] %Use the shortened version of the full title
      {Subsystems with shadowing property for $\mathbb{Z}^{k}$-actions}
\author[Lin Wang, Xinsheng Wang and Yujun Zhu]{\scshape Lin Wang$^1$, Xinsheng Wang$^2$  and Yujun Zhu$^{3,*}$}
\thanks{\emph{2010 Mathematics Subject Classification.} Primary: 37C50, 37C85, 37D30}
\thanks{\emph{Key words and phrases.} shadowing property; quasi-shadowing property; $\mathbb{Z}^{k}$-action; subdynamics; suspension.}
\email{linwang@sxufe.edu.cn}
\email{xswang@xmu.edu.cn}
\email{yjzhu@xmu.edu.cn}
\thanks{This work is supported by  NSFC (No: 11771118, 11801336).}
\thanks{$^*$ The corresponding author.}
\begin{document}

\maketitle

\date{}
\medskip

 {\footnotesize
% please put the address of the first author
   \centerline{1. School of Applied Mathematics}
   \centerline{Shanxi University of Finance and Economics, Taiyuan, 030006, P.R. China}
   \centerline{2,3. School of Mathematical Sciences}
   \centerline{ Xiamen University, Xiamen, 361005, P.R. China}
} % Do not forget to end the {\footnotesize by the sign }

\bigskip

%The abstract of your paper
\begin{abstract}
In this paper, subsystems with shadowing property for
$\mathbb{Z}^{k}$-actions are investigated. Let $\alpha$ be a continuous $\mathbb{Z}^{k}$-action on a compact metric space $X$. We introduce the notions of pseudo orbit and shadowing property for $\alpha$ along subsets, particularly subspaces, of $\mathbb{R}^{k}$. Combining with another important property ``expansiveness" for subsystems of $\alpha$ which was introduced and systematically investigated by Boyle and Lind in \cite{Boyle}, we show that if $\alpha$ has the shadowing property and is expansive along a subspace $V$ of $\mathbb{R}^{k}$, then so does for $\alpha$ along any subspace $W$ of $\mathbb{R}^{k}$ containing $V$. Let $\alpha$ be a smooth $\mathbb{Z}^{k}$-action on a closed Riemannian manifold $M$, $\mu$ an ergodic probability measure and $\Gamma$ the Oseledec set. We show that, under a basic assumption on the Lyapunov spectrum,  $\alpha$ has the shadowing property and is expansive  on $\Gamma$ along any subspace $V$ of $\mathbb{R}^{k}$ containing a regular vector; furthermore, $\alpha$ has the quasi-shadowing property on $\Gamma$ along any 1-dimensional subspace $V$ of $\mathbb{R}^{k}$ containing a first-type singular vector.  As an application, we also consider the 1-dimensional subsystems (i.e., flows) with shadowing property for the $\mathbb{R}^{k}$-action on the suspension manifold induced by $\alpha$.

\end{abstract}

\renewcommand{\sectionmark}[1]{}

\section{Introduction}

When studying a continuous $\mathbb{Z}^{k}$-action $\alpha$ on a compact metric space, one is naturally led to be interested not only in the ``overall'' dynamics of $\alpha$, but also in the dynamics of its ``subsystems'' defined by subgroups of $\mathbb{Z}^{k}$ and, more generally, the dynamics along subspaces of $\mathbb{R}^{k}$. First steps in this direction were made in a paper by Milnor \cite{Milnor}, where the concept of directional topological entropy was introduced and studied for a special class of continuous actions. To consider the continuity of the directional entropy  and other aspects in \cite{Nasu}, Boyle and Lind \cite{Boyle} provided a framework for studying the dynamics, especially the expansive subdynamics, of $\mathbb{Z}^k$.

The goal of this paper is to investigate the  shadowing property for, especially smooth, $\mathbb{Z}^{k}$-actions and to give the characterization of the subsystems which has the shadowing property. Combining the two multifaceted properties: ``shadowing property" and ``expansiveness", we will have some insight into the interaction between the ``overall" dynamics and the ``subdynamics" of a $\mathbb{Z}^{k}$-action.

It it well known that the property of shadowing describes the behavior of approximate trajectories (i.e., pseudo orbits) of a dynamical system. We say that a system has the shadowing property if there exists a real trajectory staying uniformly close to  any approximate trajectory. The shadowing property was first established by Anosov \cite{Anosov} and Bowen \cite{Bowen} for smooth systems on the hyperbolic sets, and now the shadowing theory has been developing on the powerful basis of the stability theory. For the development of the shadowing theory, please refer to the books \cite{Pilyugin}, \cite{Palmer} and \cite{Pilyugin3}.

For all we know, the shadowing theory for actions of groups more general than  $\mathbb{Z}$ was firstly considered by Pilyugin and  Tikhomirov \cite{Pilyugin1}.
One of their main results is that if there exists a 1-dimensional
subgroup of $\mathbb{Z}^{k}$ which has the shadowing property and is expansive, then so does for the ``overall'' action of $\mathbb{Z}^{k}$. Moreover, they also gave the conditions of the shadowing property for a classical linear $\mathbb{Z}^{k}$-action $\alpha$ on $\mathbb{C}^m$ generated by pairwise commuting matrices.
We can see further study of the shadowing theory for group actions from \cite{Koscielniak}, \cite{Kulczycki}, \cite{Osipov}, \cite{Pilyugin2}, etc.

As we mentioned above, our aim  is to investigate the  shadowing property for the subsystems of a $\mathbb{Z}^{k}$-action $\alpha$. Here, by the term ``subsystem" we  mean not only the action of a subgroup of $\mathbb{Z}^{k}$ but also the restriction of $\alpha$ to a subspace, even a subset,  of $\mathbb{R}^{k}$. Since a subset $F$ of $\mathbb{R}^k$ may be ``invisible"
within $\mathbb{Z}^k$, we will use the technique of thickening $F$ by a positive number $t$. This treatment is inspired by the work of Boyle and Lind \cite{Boyle} in which the expansiveness and continuity of entropy for subsystems of $\mathbb{Z}^{k}$-actions were systematically investigated.
%It was showed in \cite{Boyle} that the subspaces of $\mathbb{R}^{k}$ with the same dimension along which  $\alpha$ is expansive  form an open set in the corresponding Grassmannian
%manifold of $\mathbb{R}^k$, and moreover, by the ingenious techniques of ``coding" and ``shading",  Boyle and Lind showed that some important dynamical quantities, such as %measure-theoretic entropy
%and topological directional entropy, vary in a controlled manner (at least continuously) within
%each connected component of this set.
We will investigate the shadowing property of $\alpha$ along a subspace $V$ of $\mathbb{R}^k$ inductively. We emphasize here that in order to study the shadowing property for a subsystem $V$ of $\alpha$, we must combine the expansivity and the shadowing property for certain carefully selected subsystems of $V^t$ for some $t$ together in the process of induction.

In Section 2, we will state some basic facts about expansive subdynamics from \cite{Boyle}, introduce the definitions of pseudo orbit, shadowing property and topologically Anosov of subsystems for a continuous $\mathbb{Z}^{k}$-action $\alpha$ on  a compact metric space $(X, d)$, and give some basic properties about them (Proposition \ref{BP1}, Proposition \ref{BP2} and Proposition \ref{BP3}). Particularly, we show in Proposition \ref{BP3} that if $\alpha$ has the shadowing property and is expansive along a subspace $V$ of $\mathbb{R}^{k}$, then so does for $\alpha$ along any subspace $W$ of $\mathbb{R}^{k}$ containing $V$. It is a generalization of Theorem 1 of \cite{Pilyugin1} to a broad case. The idea in the proof of this proposition is quite useful for our further discussion. In Section 3, we consider the shadowing property of the subsystems for several classical $\mathbb{Z}^{k}$-action, such as symbolic systems and the ``linear" systems on torus.

Section 4 is devoted to our main results on shadowing property for smooth $\mathbb{Z}^{k}$-actions. Let $M$ be an $m$-dimensional closed Riemannian manifold. We denote by $\langle\!\langle\cdot,\cdot\rangle\!\rangle$,  $\|\cdot\|$ and $d(\cdot,\cdot)$ the inner product, the norm on the tangent spaces and the metric on $M$ induced by the Riemannian metric, respectively. Let $\alpha$ be a $C^r, r\ge 1,$ $\mathbb{Z}^k$-action on $M$ and $\mu$ an ergodic invariant
measure. According to Multiplicative Ergodic Theorem for $\alpha$ (see \cite{Hu}, \cite{Kalinin} and \cite{Brown} for example) the Lyapunov decompositions for individual elements of $\alpha$ have a common
refinement $T_{\Gamma}M=\bigoplus_{j=1}^{s} E_j$, where $\Gamma$ is the Oseledec set with full $\mu$-measure. Let $\{(\lambda_{i,j},m_j): 1\le i\le k, 1\le j\le s\}$ be the spectrum of $\alpha$ on $\Gamma$, where $m_j=\dim E_j$ and $\lambda_{i,j}$ is the Lyapunov exponent of the $i$th generator of $\alpha$ with respect to  $E_j$ for $x\in\Gamma$ defined by
\begin{equation}\label{ULY}
\lim_{n\longrightarrow \pm \infty}\frac{1}{n}\log\|Df_i^n(x)v\|=\lambda_{i,j},\;\;0\neq v\in E_j(x).
\end{equation}
When considering the shadowing property in the classical theory of diffeomorphisms, one often require that the system have some  hyperbolicity in certain uniform senses (see \cite{Palmer}, \cite{Pilyugin} and  \cite{Pilyugin3} for example). So we will consider the shadowing property for $\alpha$ under the following condition on the uniformity of  the convergence in the above limits.

\begin{BA}\label{Basicassumption}
 Let $\alpha$ be a $C^r,r\ge 1,$ $\mathbb{Z}^k$-action  on $M$ and $\mu$ an $\alpha$-ergodic measure.
We assume that the limits in (\ref{ULY})
are \emph{uniform} in the following sense:
for any $a>0$, there exists $N=N(a)>0$ such that when $|n|\ge N$,
\begin{equation}\label{Uniformity}
\lambda_{i,j}-a\le\frac{1}{n}\log\|Df_i^n(x)v\|\le\lambda_{i,j}+a,
\end{equation}
for any $v$ in the unit sphere in $E_j(x)$ , $1\le i\le k,  1\le j\le s, x\in\Gamma$.
\end{BA}

We say a nonzero vector $v=(v_1,\cdots,v_k)\in \mathbb{R}^k$ is \emph{regular} for  $\alpha$ if
$\sum_{i=1}^kv_i\lambda_{i,j}\neq 0\;\;\text{ for any }1\le j\le s$, otherwise we call it is \emph{singular}. Let $\mathbf{v}\in\mathbb{R}^k$ be a singular vector, we say it is a \emph{first-type} singular vector if there exist at least two indexes $j,j'\in \{1,\cdots,s\}$ such that
\begin{equation}\label{1singular}
\sum_{i=1}^kv_i\lambda_{i,j}=0\text{ and }\sum_{i=1}^kv_i\lambda_{i,j'}\neq 0.
\end{equation}

For $1\le i\le k$ and $1\le j\le s$, let $\mathbb{G}_i$ be the Grassmann manifold consists
of $i$-dimensional subspaces of $\mathbb{R}^k$ and $$\mathbb{A}_j(\alpha, \Gamma)=\{V\in \mathbb{G}_j: V \text{ is topologically Anosov  on } \Gamma \text{ for }\alpha\}.$$ The main results of this paper are as follows (one can see the precise definitions and notations in the next sections).

\begin{TheoremA}\label{ThmA}
Let $\alpha$ be a $C^r,r\ge 1,$ $\mathbb{Z}^k$-action  on $M$ and $\mu$ an $\alpha$-ergodic measure which satisfies the basic assumption. Let  $\mathbf{v}=(v_1,\cdots,v_k)\in\mathbb{R}^k$. If $\mathbf{v}$ is regular,
then $L_\mathbf{v}\in\mathbb{A}_1(\alpha, \Gamma)$. Hence for each $1\le i\le k$,
$$
\mathbb{A}_i(\alpha, \Gamma)=\{V\in \mathbb{G}_i : V \text{ contains a regular vector} \}.
$$
\end{TheoremA}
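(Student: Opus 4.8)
The plan is to reduce the full characterization of $\mathbb{A}_i(\alpha,\Gamma)$ to the single assertion that $L_{\mathbf{v}}\in\mathbb{A}_1(\alpha,\Gamma)$ for every regular $\mathbf{v}$, and then to prove that assertion by producing uniform hyperbolicity in the direction $\mathbf{v}$. For the reduction: once $L_{\mathbf{v}}$ is known to be topologically Anosov on $\Gamma$ (i.e. to have the shadowing property and to be expansive along $L_{\mathbf{v}}$), Proposition~\ref{BP3} promotes both properties to every subspace $W\supseteq L_{\mathbf{v}}$; hence any $W\in\mathbb{G}_i$ containing a regular vector $\mathbf{v}$ lies in $\mathbb{A}_i(\alpha,\Gamma)$. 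For the reverse inclusion, suppose $W\in\mathbb{G}_i$ contains no regular vector. Writing $H_j=\{\mathbf{w}\in\mathbb{R}^k:\sum_{i=1}^k w_i\lambda_{i,j}=0\}$ for the $j$-th singular hyperplane, the absence of a regular vector means $W\subseteq\bigcup_{j=1}^s H_j$; since $W$ is a subspace and each $H_j$ is a linear hyperplane, a genericity argument over $\mathbb{R}$ forces $W\subseteq H_{j_0}$ for some $j_0$. Thus the bundle $E_{j_0}$ carries zero directional exponent in every direction of $W$, which I will argue (using the uniformity in the basic assumption) prevents expansiveness along $W$; consequently $W\notin\mathbb{A}_i(\alpha,\Gamma)$. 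These two inclusions yield the displayed formula.

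It remains to prove that a regular $\mathbf{v}$ gives $L_{\mathbf{v}}\in\mathbb{A}_1(\alpha,\Gamma)$. First I split the Oseledec decomposition according to the sign of the $\mathbf{v}$-directional exponent: set $E^u_{\mathbf{v}}=\bigoplus_{\,\sum_i v_i\lambda_{i,j}>0}E_j$ and $E^s_{\mathbf{v}}=\bigoplus_{\,\sum_i v_i\lambda_{i,j}<0}E_j$, so that regularity gives $T_\Gamma M=E^s_{\mathbf{v}}\oplus E^u_{\mathbf{v}}$ with no neutral summand. The heart of the matter is to convert this into genuine uniform hyperbolicity. Choosing $a>0$ smaller than $\tfrac{1}{2}\min_j|\sum_i v_i\lambda_{i,j}|$ and applying the basic assumption, I select an integer vector $\mathbf{n}\in\mathbb{Z}^k$ lying in a thin cone $L_{\mathbf{v}}^t$ about $\mathbf{v}$ with $|\mathbf{n}|\ge N(a)$; then the chain rule together with (\ref{Uniformity}) shows that the single diffeomorphism $g=\alpha^{\mathbf{n}}$ contracts $E^s_{\mathbf{v}}$ and expands $E^u_{\mathbf{v}}$ at uniform exponential rates over all of $\Gamma$. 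In other words, $\Gamma$ is a uniformly hyperbolic set for $g$.

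With this in hand the two required properties follow. Expansiveness of $g$ on $\Gamma$ is automatic from hyperbolicity, and because $\mathbf{n}$ points along $\mathbf{v}$ this upgrades to expansiveness of $\alpha$ on $\Gamma$ along $L_{\mathbf{v}}$ once the finitely many transverse generator-directions are absorbed into the thickening $t$. For the shadowing property I invoke the classical Anosov--Bowen shadowing lemma \cite{Anosov,Bowen} for the hyperbolic set $\Gamma$ of $g$: every $g$-pseudo orbit in $\Gamma$ is shadowed by a true $g$-orbit. A pseudo orbit of $\alpha$ along $L_{\mathbf{v}}^t$ is then sampled at the lattice of multiples of $\mathbf{n}$, shadowed by $g$, and the intermediate points are filled in using the commutativity of $\alpha$ together with the uniform continuity of the finitely many generators on the compact manifold $M$; this produces a genuine $\alpha$-orbit shadowing the given pseudo orbit along $L_{\mathbf{v}}$, establishing the shadowing property. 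Combining expansiveness with shadowing gives $L_{\mathbf{v}}\in\mathbb{A}_1(\alpha,\Gamma)$.

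The main obstacle is precisely the passage, in the two preceding paragraphs, from the \emph{measurable} Oseledec splitting on $\Gamma$ to \emph{uniform} hyperbolic estimates for the single map $g=\alpha^{\mathbf{n}}$: this is exactly what the basic assumption is designed to supply, but care is needed because $\mathbf{v}$ is a real direction whereas the action is defined only over $\mathbb{Z}^k$, so $\mathbf{n}$ cannot be exactly parallel to $\mathbf{v}$ and one must control the resulting angular error through the thickening parameter $t$ and the choice of $a$. A secondary technical point is the converse half of the characterization: turning the vanishing of the $E_{j_0}$-exponent along $W$ into a genuine failure of expansiveness requires ruling out subexponential drift in the neutral bundle $E_{j_0}$, which again leans on the uniform bounds of the basic assumption together with the compactness of $M$.
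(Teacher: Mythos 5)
Your argument for the core assertion hinges on a single diffeomorphism $g=\alpha^{\mathbf{n}}$ with $\mathbf{n}$ approximately parallel to $\mathbf{v}$, with the pseudo orbit ``sampled at the lattice of multiples of $\mathbf{n}$'' and intermediate points filled in by uniform continuity. This works, and essentially reproduces the paper's Proposition \ref{case1}, only when $\mathbf{v}$ is \emph{rational}, i.e.\ $L_\mathbf{v}\cap\mathbb{Z}^k\setminus\{\mathbf{0}\}\neq\emptyset$. When $\mathbf{v}$ is irrational the reduction fails outright: since $\mathbf{n}\notin L_\mathbf{v}$, the multiples $l\mathbf{n}$ drift away from $L_\mathbf{v}$ at speed linear in $|l|$ and exit every tube $L_\mathbf{v}^t$ of fixed thickness, so no choice of $t$ and $a$ ``controls the angular error'' --- the error is unbounded along the $g$-orbit. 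A $\delta$-pseudo orbit of $L_\mathbf{v}^t$ is indexed by $L_\mathbf{v}^t\cap\mathbb{Z}^k$, which for irrational $\mathbf{v}$ is a chain of lattice points whose consecutive increments $\mathbf{n}^{(p+1)}-\mathbf{n}^{(p)}$ never stabilize; there is no sub-lattice $\{l\mathbf{n}:l\in\mathbb{Z}\}$ inside the tube at which to sample, and for the same reason expansiveness of the single map $g$ says nothing about $d_{\alpha}^{L_\mathbf{v}^t}$. This is precisely why the paper, in Proposition \ref{case2}, replaces the single map by the \emph{nonautonomous} system $\mathcal{G}$ generated by $g_p=\alpha^{\mathbf{n}^{(p+1)}-\mathbf{n}^{(p)}}$, with all increments having coordinates of size between $N$ and $2N$ and correct signs, proves uniform hyperbolicity of $\mathcal{G}$ on $\Gamma$, and then establishes a nonautonomous Lipschitz shadowing and expansiveness lemma (Lemma \ref{mainlemma}) from scratch via a contraction argument in a Banach space of vector sequences; your appeal to the classical Anosov--Bowen lemma does not cover a sequence of different maps. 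Note also that in the rational case the thickness is taken \emph{small} (so that $L_\mathbf{v}^{t_0}\cap\mathbb{Z}^k=\{l\mathbf{n}^\diamond\}$); enlarging $t$, as your phrase ``absorbed into the thickening'' suggests, only adds lattice points that a single map cannot track.

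Two secondary points. First, in your converse inclusion the step $W\subseteq\bigcup_j \ker\chi_j\Rightarrow W\subseteq\ker\chi_{j_0}$ is sound (a real vector space is not a finite union of proper subspaces), but the claim that this destroys expansiveness along $W$ is only asserted: vanishing directional exponents on $E_{j_0}$ do not by themselves preclude subexponential separation of orbits, and you give no construction ruling it out; here you attempt more than the paper's printed proof (which derives the displayed equality from Propositions \ref{case1}, \ref{case2} and \ref{BP3}) but do not complete it. Second, the uniformity in (\ref{Uniformity}) is per generator: to get $\|Dg^m(x)v\|\le e^{-bm}\|v\|$ by the chain rule one needs \emph{every} coordinate $|n_i|\ge N$, not merely $|\mathbf{n}|\ge N(a)$ as you require, which forces the paper's preliminary reduction to the case where all $v_i\neq 0$ (passing to a sub-action when some components vanish); your choice of $\mathbf{n}$ does not guarantee this.
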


\begin{TheoremB}\label{ThmB}
Let $\alpha$ be a $C^r,r\ge 1,$ $\mathbb{Z}^k$-action  on $M$ and $\mu$ an $\alpha$-ergodic measure which satisfies the basic assumption. Let  $\mathbf{v}=(v_1,\cdots,v_k)\in\mathbb{R}^k$. If $\mathbf{v}$ is a first-type singular vector,
then $L_\mathbf{v}$ has the quasi-shadowing property for $\alpha$ on $\Gamma$.
\end{TheoremB}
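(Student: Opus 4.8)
The plan is to read $L_{\mathbf v}$ as a \emph{partially hyperbolic} one--parameter subsystem of $\alpha$ and to run a fixed--point (graph--transform) scheme that shadows exactly in the hyperbolic directions while only requiring small corrections in the center direction. First I would record the splitting determined by $\mathbf v$. For each $j$ set the Lyapunov functional $\chi_j:=\sum_{i=1}^k v_i\lambda_{i,j}$, partition $\{1,\dots,s\}$ by the sign of $\chi_j$, and put $E^s_{\mathbf v}=\bigoplus_{\chi_j<0}E_j$, $E^u_{\mathbf v}=\bigoplus_{\chi_j>0}E_j$, $E^c_{\mathbf v}=\bigoplus_{\chi_j=0}E_j$, so that $T_\Gamma M=E^s_{\mathbf v}\oplus E^c_{\mathbf v}\oplus E^u_{\mathbf v}$. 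The first--type hypothesis (\ref{1singular}) is exactly what guarantees $E^c_{\mathbf v}\neq\{0\}$ \emph{and} $E^s_{\mathbf v}\oplus E^u_{\mathbf v}\neq\{0\}$, so the subsystem is genuinely partially hyperbolic (neither Anosov, as in the regular case of Theorem A, nor purely central).

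Next I would convert the basic assumption into \emph{uniform} one--step estimates along the thickened line. Because the generators commute, the derivative cocycle along an integer vector $\mathbf n$ acts on $E_j$ with growth governed by $\sum_i n_i\lambda_{i,j}$; applying (\ref{ULY})--(\ref{Uniformity}) to integer points $\mathbf n\in\mathbb{Z}^{k}\cap L_{\mathbf v}^{\,t}$ yields contraction on $E^s_{\mathbf v}$, expansion on $E^u_{\mathbf v}$, and at most $e^{a|\mathbf n|}$ growth on $E^c_{\mathbf v}$, uniformly over $\Gamma$. These are the \emph{same} hyperbolicity estimates underlying Theorem~A, now restricted to the sub--bundle $E^s_{\mathbf v}\oplus E^u_{\mathbf v}$. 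Since the uniformity only begins after $|\mathbf n|\ge N(a)$, I would pass to an adapted (Lyapunov) norm absorbing the transient so that contraction/expansion become genuinely one--step, and then discretize: choosing $t$ large enough that $\mathbb{Z}^{k}\cap L_{\mathbf v}^{\,t}$ is a coarsely connected net along the line, I encode a $\delta$--pseudo orbit as a bounded sequence in $X$ indexed along that net.

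The heart of the argument is then a fixed--point construction in the Banach space of bounded sequences of tangent vectors, split as $s\oplus c\oplus u$. In $E^s_{\mathbf v}\oplus E^u_{\mathbf v}$ this is the classical hyperbolic shadowing: the stable component is solved forward, the unstable component backward, and together they define a uniform contraction whose fixed point produces a true orbit tracking the pseudo orbit in the hyperbolic directions. The center component \emph{cannot} be shadowed; instead, at each step I insert a correction vector $w_{\mathbf n}\in E^c_{\mathbf v}$ and require only that consecutive points of the corrected orbit differ from their $\alpha$--image by such a center motion — which is precisely the quasi-shadowing conclusion. I would solve for $\{w_{\mathbf n}\}$ simultaneously with the hyperbolic fixed point, keep $\|w_{\mathbf n}\|$ controlled by $\delta$, and finally transfer the estimate from the tangent spaces to $M$ via the exponential map, checking that all distances are $<\varepsilon$ and that the constructed orbit stays inside the region where the uniform estimates apply.

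The main obstacle is the center direction. Because $E^c_{\mathbf v}$ grows only subexponentially, the corrections $w_{\mathbf n}$ neither decay forward nor backward, so I must prevent their accumulation from destroying the bounded--sequence estimates and the contraction; this is exactly what forces the \emph{quasi} weakening and what makes the adapted norm essential, so that the hyperbolic block remains a contraction uniform over the (noncompact) Oseledec set $\Gamma$. A secondary, more technical difficulty is the passage from integer points of $L_{\mathbf v}^{\,t}$ to a statement genuinely \emph{along} $L_{\mathbf v}$: the pseudo orbit lives on an irrational net, and one must verify that the contraction/expansion/center estimates hold uniformly for \emph{all} integer vectors within the thickening, not merely along a single rational sub--ray.
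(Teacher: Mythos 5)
Your proposal follows essentially the same route as the paper: the same sign-based splitting $T_\Gamma M=E^s\oplus E^c\oplus E^u$ determined by the sign of $\chi_j(\mathbf{v})$, uniform partial hyperbolicity along the thickened line extracted from the basic assumption, and a Banach-space fixed-point scheme that shadows in the hyperbolic directions while solving simultaneously for center correction vectors --- which is exactly the content of the paper's Lemma \ref{mainlemma2} (adapted from \cite{Hu1}) applied to the (possibly nonautonomous) subsystem generated by lattice steps in $L_\mathbf{v}^{t_0}$. The only cosmetic difference is technical bookkeeping: the paper splits into rational and irrational cases and takes lattice increments of size at least $N(a)$ so that the uniform estimates (\ref{Uniformity}) apply directly, recovering the intermediate lattice points by equicontinuity of the generators, whereas you propose absorbing the transient into an adapted Lyapunov norm over a single coarsely connected net; both devices serve the same purpose.
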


As an application, we consider, in Section 5, the shadowing property for the $\mathbb{R}^{k}$-action $\tilde{\alpha}$ (the suspension) on a quotient space of $\mathbb{R}^{k}\times M$ induced by $\alpha$. In this case, every subsystem of $\tilde{\alpha}$ along a 1-dimensional subspace $V$ of $\mathbb{R}^k$ induces a flow.

\begin{TheoremC}\label{ThmC}
Let $\alpha$ be a $C^r,r\ge 1,$ $\mathbb{Z}^k$-action  on $M$ and $\mu$ an $\alpha$-ergodic measure which satisfies the basic assumption, and $\tilde{\alpha}$ be the suspension
of $\alpha$. Let  $\mathbf{v}=(v_1,\cdots,v_k)\in\mathbb{R}^k$.

(1) If $\mathbf{v}$ is regular for $\alpha$, then $\tilde{\alpha}$ has the shadowing property along $L_\mathbf{v}$.

(2) If $\mathbf{v}$ is a first-type singular vector for $\alpha$, then $\tilde{\alpha}$ has the quasi-shadowing property along $L_\mathbf{v}$.
\end{TheoremC}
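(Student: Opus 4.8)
The plan is to reduce the flow-shadowing problem on the suspension $\tilde{M}$ to the already-established shadowing (resp.\ quasi-shadowing) of $\alpha$ along $L_\mathbf{v}$ on the base, by exploiting the local product structure of the suspension. Recall that $\tilde{M}=(\mathbb{R}^k\times M)/\!\sim$, where $(\mathbf{s}+\mathbf{n},x)\sim(\mathbf{s},\alpha^{\mathbf{n}}x)$ for $\mathbf{n}\in\mathbb{Z}^k$, and $\tilde{\alpha}^{\mathbf{t}}[\mathbf{s},x]=[\mathbf{s}+\mathbf{t},x]$; the subsystem along $L_\mathbf{v}$ is then the flow $t\mapsto\tilde{\alpha}^{t\mathbf{v}}$. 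First I would fix a Riemannian metric on $\tilde{M}$ for which a neighborhood of each point $[\mathbf{s}_0,x_0]$ is isometric to a product $U\times B$, with $U\subset\mathbb{R}^k$ a small cube around $\mathbf{s}_0$ and $B$ a ball around $x_0$ in $M$. In such charts $\tilde{\alpha}^{t\mathbf{v}}$ is a pure translation in the $\mathbb{R}^k$-factor (an isometry, hence neutral) and carries the $\alpha$-dynamics in the $M$-factor. The relevant invariant set is $\tilde{\Gamma}$, the image of $\mathbb{R}^k\times\Gamma$ in $\tilde{M}$, which is $\tilde{\alpha}$-invariant since $\Gamma$ is $\alpha$-invariant.

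Next, given a pseudo orbit of $\tilde{\alpha}$ along $L_\mathbf{v}$, I would project it through the fibration $\tilde{M}\to\mathbb{T}^k$ and record the successive times at which the $\mathbb{R}^k$-coordinate crosses the integer lattice; the associated holonomies are precisely the generators $\alpha^{\mathbf{n}}$. Reading off the $M$-coordinates at these crossing times produces a pseudo orbit of $\alpha$ along $L_\mathbf{v}$ on $\Gamma$, with error controlled by the $\tilde{\alpha}$-pseudo-orbit constant via the product charts. For part (1), Theorem \ref{ThmA} gives $L_\mathbf{v}\in\mathbb{A}_1(\alpha,\Gamma)$, so $\alpha$ has the shadowing property (and is expansive) along $L_\mathbf{v}$ on $\Gamma$; this yields a true $\alpha$-orbit shadowing the fiber pseudo orbit. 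Lifting this orbit back to $\tilde{M}$ and gluing it with the flow translation in the $\mathbf{v}$-direction produces a candidate shadowing orbit for $\tilde{\alpha}$.

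The main obstacle is the neutral flow direction $\mathbf{v}$: since $\tilde{\alpha}^{t\mathbf{v}}$ neither contracts nor expands along $\mathbf{v}$, one cannot expect exact-time shadowing, and the shadowing orbit must be allowed a monotone time reparametrization $h\colon\mathbb{R}\to\mathbb{R}$ close to the identity. Constructing $h$ so that the lifted orbit stays uniformly close to the pseudo orbit---matching the lattice-crossing times of the pseudo orbit to those of the true orbit---is the heart of the argument; this is the standard passage from diffeomorphism shadowing to flow shadowing, and the expansiveness furnished by Theorem \ref{ThmA} guarantees that the reparametrized orbit is the correct one.

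Finally, for part (2), when $\mathbf{v}$ is a first-type singular vector, Theorem \ref{ThmB} gives only quasi-shadowing of $\alpha$ along $L_\mathbf{v}$: the fiber pseudo orbit can be shadowed only up to a controlled movement along the central subbundle, namely the sum of those $E_j$ with $\sum_{i=1}^k v_i\lambda_{i,j}=0$. Transferring this through the same product charts, the neutral bundle of $\tilde{\alpha}$ along $L_\mathbf{v}$ is the direct sum of this central subbundle and the flow direction $\mathbf{v}$; the quasi-shadowing correction in the base together with the time reparametrization in the flow direction then assemble into a quasi-shadowing orbit for $\tilde{\alpha}$, which gives the conclusion.
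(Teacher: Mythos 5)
Your proposal aims at the wrong target, and the stronger statement you actually try to prove is false for $k\ge 2$. Look at how the paper defines shadowing for $\tilde{\alpha}$ along $L_\mathbf{v}$ in Section 5: a $(\delta,a)$-chain consists of points $(\mathbf{u}_p,x_p)$ with jump vectors $\mathbf{v}_p\in V$, and the shadowing object is a point $x\in M$ required to satisfy $\sup_p d\bigl(x_p,\,\pi_M\tilde{\alpha}^{\sum_{j=0}^{p-1}\mathbf{v}_j}(\mathbf{0},x)\bigr)\le\varepsilon$. Errors are measured \emph{only in the fiber $M$ after applying $\pi_M$}, and at the \emph{exactly matched} accumulated times $\sum_{j<p}\mathbf{v}_j$; the torus coordinates $\mathbf{u}_p$ are never tracked and no reparametrization appears anywhere. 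Since the partial sums $\sum_{j<p}\mathbf{v}_j$ lie on $L_\mathbf{v}$, the lattice points realizing $\pi_M$ lie in a thickened tube $L_\mathbf{v}^t$, so a $(\delta,a)$-chain transcribes verbatim into a $\delta$-pseudo orbit of $L_\mathbf{v}^t$ for $\alpha$ on $\Gamma$ (your lattice-crossing bookkeeping is essentially this transcription), and Theorems A and B apply directly. That is the paper's entire proof: ``By Theorem A and Theorem B, we get Theorem C immediately.'' Your charts, holonomies, and reparametrization $h$ are not needed for the statement as defined.

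The genuine gap is in the part of your argument you call the heart: honest orbit-shadowing in the suspension manifold $S$, with only a monotone time change along $\mathbf{v}$ allowed, cannot work when $k\ge 2$. The derivative of $\tilde{\alpha}^{t\mathbf{v}}$ is the identity on all $k$ of the $\mathbb{R}^k$-orbit directions, so besides the flow direction there are $k-1$ \emph{neutral directions transverse to the flow}. A pseudo orbit whose $\mathbb{T}^k$-coordinate drifts slowly in a direction $\mathbf{w}$ orthogonal to $\mathbf{v}$ has every jump error of size $\delta$, yet its torus coordinate eventually departs from any single flow line by a definite amount, and a reparametrization shifts the true orbit only along $\mathbf{v}$, never along $\mathbf{w}$; so no reparametrized orbit $\varepsilon$-shadows it. For the same reason the flow $\tilde{\alpha}_{L_\mathbf{v}}$ is not expansive on $S$, so the ``standard passage from diffeomorphism shadowing to flow shadowing'' you invoke has no expansiveness to work with --- Theorem A gives expansiveness of $\alpha$ on $\Gamma$ along $L_\mathbf{v}$ in the fiber, not of the suspension flow. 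Relatedly, in part (2) your identification of the neutral bundle as $E^c$ plus the flow direction is incorrect: it is $\mathbb{R}^k\oplus E^c$, i.e.\ it contains all $k$ torus directions. Your construction becomes correct precisely when one measures closeness after projecting out the torus factor via $\pi_M$ --- but that is exactly the definition the paper adopts, under which the theorem is an immediate corollary of Theorems A and B and your elaborate machinery is superfluous.
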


We point out here that it is Pilyugin and  Tikhomirov's work \cite{Pilyugin1}  making us pay attention to this topic and giving us the initial idea. Combining the thickening technique of Boyle and Lind (\cite{Boyle}) and the method in studying shadowing property, especially quasi-shadowing property (\cite{Hu1}),  we investigate the shadowing properties for subsystems of $\mathbb{Z}^{k}$-actions.

\begin{remark}
One can see that the condition (\ref{ULY}) in the basic assumption is strong.  Under such an condition the smooth $\mathbb{Z}^k$-action $\alpha$ is uniformly hyperbolic along the regular direction (resp. partially hyperbolic along the first-type singular direction). Therefore, we can achieve one of the main goals of this paper, that is, to give the description of the subspaces of $\mathbb{R}^{k}$ along which $\alpha$ is topologically Anosov (i.e., $\alpha$ has the shadowing property and is expansive) on $\Gamma$. Once we only consider the dynamics of  the ``entire" but not the ``subsystems" of $\mathbb{Z}^k$-action $\alpha$, we can use the classical Pesin theory to consider the shadowing properties under a weaker condition. We mentioned that Pan, Zhang and Zhou \cite{Pan} did such work for $\mathbb{Z}^d$-actions which has a nonuniformly hyperbolic or partially hyperbolic generator.
\end{remark}

\section{Definitions, notations and basic properties}

Let $(X, d)$ be a compact metric space. Let $\mathbb{Z}^{k}, k>1$, be the additive group and Homeo$(X, X)$ the group of homeomorphisms on $X$. A $\mathbb{Z}^{k}$-\emph{action} $\alpha$ on $X$ is a homomorphism from $\mathbb{Z}^{k}$ to Homeo$(X, X)$. For $\mathbf{n}\in \mathbb{Z}^{k}$, we denote the corresponding homeomorphism
by $\alpha^\mathbf{n}$, so that $\alpha^{\mathbf{n}+\mathbf{m}}=\alpha^{\mathbf{n}}\circ \alpha^{\mathbf{m}}$ and $\alpha^\mathbf{0}$ is the identity on $X$.  We denote the collection of generators of $\alpha$ by
\begin{equation}\label{1.4}
  \{f_{i}=\alpha^{\mathbf{e}_{i}}:1\leq i\leq k\},
\end{equation}
where $\mathbf{e}_{i}=(0,\cdots,\overset{\left(i\right)}{1},\cdots,0)$ is the standard $i$-th generator of $\mathbb{Z}^{k}$. A  Borel probability measure $\mu$ on $X$ is said to be $\alpha$-\emph{invariant}, if $\mu$ is $f_{i}$-invariant for each $i$. We say a subset $\Gamma\subset X$ is  $\alpha$-\emph{invariant} if $f_{i}(\Gamma)=\Gamma$ for $1\leq i\leq k$.  When $X=M$ is a closed smooth Riemannian manifold $M$, we can consider a $C^{r}, r\ge 1,$ $\mathbb{Z}^{k}$-action $\alpha$ on $M$ from $\mathbb{Z}^{k}$ to Diff$^{r}(M, M)$, where Diff$^{r}(M, M)$ is the space of $C^{r}$ diffeomorphisms equipped with the $C^{r}$-topology.

Subdynamics, especially expansive subdynamics, of a $\mathbb{Z}^k$-action are systematically investigated by Boyle and Lind \cite{Boyle}. Some of the following notations and statements concerning expansiveness are derived from \cite{Boyle}.

For a subset $F\subset \mathbb{R}^k$, put
$$
d_{\alpha}^F (x, y) = \sup\{d(\alpha^{\mathbf{n}}(x),\alpha^{\mathbf{n}}(y)) : \mathbf{n}\in  F \cap \mathbb{Z}^{k}\},\;x,y\in X.
$$
If $F \cap \mathbb{Z}^{k}=\emptyset$, then put $d_{\alpha}^F (x, y)=0$.
Let $|\cdot|$
denote the Euclidean norm on $\mathbb{R}^k$, and for
$\mathbf{v}\in\mathbb{R}^k$, define
$$
\mbox{dist}(\mathbf{v},F) =\inf\{|\mathbf{v}-\mathbf{w}|:\mathbf{w}\in F\}.
$$
For $t>0$ put
$$
F^t=\{\mathbf{v}\in\mathbb{R}^k:\mbox{dist}(\mathbf{v},F)\leq t\},
$$
so that $F^t$ is the result of thickening $F$ by $t$. For a subspace $V$ of $\mathbb{R}^k$, let $\pi_V$ denote orthogonal projection to $V$ along its orthogonal complement $V^{\perp}$, so that $\pi_V+\pi_{V^{\perp}}=Id$. Then clearly
$$
V^t=\{\mathbf{v}\in\mathbb{R}^k :  |\pi_{V^{\perp}}(\mathbf{v})|\le t\}.
$$
%We define
%$$
%V^t(r)=\{\mathbf{v}\in\mathbb{R}^k : |\pi_V(\mathbf{v})|\le r \text{ and }  |\pi_{V^{\perp}}(\mathbf{v})|\le t\}.
%$$
For any $\mathbf{v}=(v_1,\cdots,v_k)$ and $r>0$, denote by
$$
B(\mathbf{v}, r)=\{\mathbf{w}:|\mathbf{v}-\mathbf{w}|<r \}
$$
the $r$-ball centered at $\mathbf{v}$.
Let  $\bar{B}(\mathbf{v}, r)$ denote the closure of $B(\mathbf{v}, r)$.

In the remaining of this section, we always assume $\alpha$ is a $\mathbb{Z}^k$-action on $X$, $\Gamma$ is an $\alpha$-invariant subset of $X$ and $F$ is a subset of $\mathbb{R}^k$.

\subsection{Expansiveness}

\begin{definition}

(1) $\alpha$ is called to be \emph{expansive} on $\Gamma$ provided there is an \emph{expansive constant} $\rho_{\Gamma}>0$ such
that for any $x,y\in \Gamma$, $d_{\alpha}^{\mathbb{R}^k}(x, y)\le \rho_{\Gamma}$ implies that $x=y$. In particular, when $\Gamma=X$, we say that $\alpha$ is \emph{expansive}.

(2) We say that $F$ is \emph{expansive} on $\Gamma$ for $\alpha$ if there are \emph{expansive constant} $\rho_{F,\Gamma}>0$ and \emph{thickness} $t=t(F, \rho_{F,\Gamma})>0$ such that for any $x,y\in \Gamma$, $d_{\alpha}^{F^t}(x, y)\le \rho_{F,\Gamma}$ implies that $x=y$.  In particular, when $\Gamma=X$, we say that $F$ is \emph{expansive} for $\alpha$.
\end{definition}

From the above definition, we can see that if $\alpha$ (resp. $F$) is expansive on $\Gamma$ and $\rho_{\Gamma}$ (resp. $\rho_{F,\Gamma}$) is an expansive constant for $\alpha$ (resp. $F$), then any positive number $\rho < \rho_{\Gamma}$ (resp. $\rho < \rho_{F,\Gamma}$) is also an expansive constant for $\alpha$ (resp. $F$). Moreover, for different expansive subsets of $\mathbb{R}^k$ on $\Gamma$ for $\alpha$ usually there are different expansive constants. The following proposition tells us once $\alpha$ is expansive on $\Gamma$ and an expansive constant $\rho_{\Gamma}$ for $\alpha$ is given, via additional thickening, we can use $\rho_{\Gamma}$ as the same expansive constants for all expansive subsets of $\mathbb{R}^k$ for $\alpha$ on $\Gamma$.

\begin{proposition}\label{BP1}

(1) If $F$ is expansive for $\alpha$ on $\Gamma$ and  $E\supset F$, then $E$  is also expansive on $\Gamma$.

(2) If $F$ is expansive for $\alpha$ on $\Gamma$, then every translate $F+\mathbf{v}, \mathbf{v}\in \mathbb{R}^k,$ of $F$  is also expansive for $\alpha$ on $\Gamma$.

(3) Assume $\alpha$ is expansive on $\Gamma$ and $\rho_{\Gamma}$ is an expansive constant of $\alpha$.  Then
 $\rho_{\Gamma}$ is an expansive constant for any expansive subset $F$ of $\mathbb{R}^k$ on $\Gamma$.  In this case,  the corresponding thickness $t=t(F, \rho_{\Gamma})$ is called an \emph{expansive radius} of $F$ on $\Gamma$.

\end{proposition}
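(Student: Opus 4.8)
The plan is to reduce all three parts to the monotonicity of the quantity $d_\alpha^{G}(x,y)$ in the ``window'' $G$: if $G\supset H$ then $G\cap\mathbb{Z}^k\supset H\cap\mathbb{Z}^k$, so taking a supremum over the larger set of lattice points gives $d_\alpha^{G}(x,y)\ge d_\alpha^{H}(x,y)$ for all $x,y$. Enlarging the window can therefore only \emph{destroy} an inequality of the form $d_\alpha^{G}(x,y)\le\rho$, which is exactly the mechanism that preserves expansiveness.

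For part (1), I would observe that $E\supset F$ forces $E^{t}\supset F^{t}$ for every $t$, since $\mathrm{dist}(\mathbf v,E)\le\mathrm{dist}(\mathbf v,F)$. Taking the thickness $t$ and expansive constant $\rho_{F,\Gamma}$ witnessing the expansiveness of $F$, monotonicity gives $d_\alpha^{E^{t}}(x,y)\ge d_\alpha^{F^{t}}(x,y)$, so $d_\alpha^{E^{t}}(x,y)\le\rho_{F,\Gamma}$ forces $d_\alpha^{F^{t}}(x,y)\le\rho_{F,\Gamma}$, hence $x=y$; thus $E$ is expansive with the same data. For part (2) the only point is the elementary inclusion $F^{t}\subset(F+\mathbf v)^{\,t+|\mathbf v|}$, which follows from $\mathrm{dist}(\mathbf u,F+\mathbf v)\le\mathrm{dist}(\mathbf u,F)+|\mathbf v|$. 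Monotonicity again yields $d_\alpha^{(F+\mathbf v)^{\,t+|\mathbf v|}}(x,y)\ge d_\alpha^{F^{t}}(x,y)$, so $F+\mathbf v$ is expansive on $\Gamma$ with constant $\rho_{F,\Gamma}$ and thickness $t+|\mathbf v|$. (Alternatively, one may absorb the integer part of $\mathbf v$ using the $\alpha$-invariance of $\Gamma$ together with the identity $d_\alpha^{(F+\mathbf m)^{t}}(x,y)=d_\alpha^{F^{t}}(\alpha^{\mathbf m}x,\alpha^{\mathbf m}y)$ valid for $\mathbf m\in\mathbb{Z}^k$, reducing to a translation by a vector of length at most $\sqrt{k}/2$.)

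Part (3) is the substantial one. Let $\rho_{F,\Gamma}$ and $t_0$ witness the expansiveness of $F$. The heart of the argument is a uniform separation lemma for the expansive action $\alpha$ on $\Gamma$: for every $\delta>0$ there exists $N=N(\delta)$ such that, for all $x,y\in\Gamma$, if $d(\alpha^{\mathbf n}x,\alpha^{\mathbf n}y)\le\rho_\Gamma$ for every $\mathbf n\in\mathbb{Z}^k$ with $|\mathbf n|\le N$, then $d(x,y)\le\delta$. Granting this with $\delta=\rho_{F,\Gamma}$, I would fix $x\ne y$ in $\Gamma$ and use the expansiveness of $F$ to find $\mathbf m\in F^{t_0}\cap\mathbb{Z}^k$ with $d(\alpha^{\mathbf m}x,\alpha^{\mathbf m}y)>\rho_{F,\Gamma}$. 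Since $\Gamma$ is $\alpha$-invariant, $\alpha^{\mathbf m}x,\alpha^{\mathbf m}y\in\Gamma$, and the contrapositive of the separation lemma produces $\mathbf p$ with $|\mathbf p|\le N$ and $d(\alpha^{\mathbf m+\mathbf p}x,\alpha^{\mathbf m+\mathbf p}y)>\rho_\Gamma$. Putting $\mathbf n=\mathbf m+\mathbf p$ we get $\mathrm{dist}(\mathbf n,F)\le\mathrm{dist}(\mathbf m,F)+|\mathbf p|\le t_0+N$, so $\mathbf n\in F^{\,t_0+N}\cap\mathbb{Z}^k$ and $d(\alpha^{\mathbf n}x,\alpha^{\mathbf n}y)>\rho_\Gamma$. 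Hence $d_\alpha^{F^{\,t_0+N}}(x,y)>\rho_\Gamma$ whenever $x\ne y$, which is exactly the statement that $\rho_\Gamma$ is an expansive constant for $F$ on $\Gamma$, with expansive radius $t=t_0+N$; note no case distinction on the relative size of $\rho_\Gamma$ and $\rho_{F,\Gamma}$ is needed, since in the easy regime the lemma simply returns $N=0$.

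The main obstacle is the uniform separation lemma, and this is where compactness is indispensable. I would prove it by contradiction: for each $N$ choose $x_N\ne y_N$ in $\Gamma$ with $d(\alpha^{\mathbf n}x_N,\alpha^{\mathbf n}y_N)\le\rho_\Gamma$ for all $|\mathbf n|\le N$ but $d(x_N,y_N)>\delta$, pass to a convergent subsequence $x_N\to x_*$, $y_N\to y_*$ using compactness of $X$, and invoke continuity of each $\alpha^{\mathbf n}$ to conclude $d_\alpha^{\mathbb{R}^k}(x_*,y_*)\le\rho_\Gamma$ while $d(x_*,y_*)\ge\delta>0$. This contradicts the expansiveness of $\alpha$ provided the limit pair still lies in the set on which expansiveness is assumed, so the argument is cleanest when $\Gamma$ is closed (hence compact); in general the expansiveness hypothesis on $\Gamma$ should be read so that such limit points cannot violate it. Since no uniform bound on $N$ is available without this compactness input, it is this step, rather than the routine bookkeeping with thickenings in parts (1) and (2), that I expect to be the crux.
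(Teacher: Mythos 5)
Your proposal follows essentially the same route as the paper: parts (1) and (2) via monotonicity of $d_{\alpha}^{G}$ in the window $G$ and the inclusion $F^{t}\subset (F+\mathbf{v})^{t+|\mathbf{v}|}$, and part (3) via exactly the uniform separation fact the paper invokes (the paper's $r(\rho)$ is your $N(\delta)$), yielding the same thickness up to the paper's harmless extra $\sqrt{k}$ slack. You go one step further by actually proving that fact by a compactness/diagonal argument, and your caveat that the limit step requires $\Gamma$ to be closed (or the expansiveness hypothesis to persist at limit points) is a genuine subtlety that the paper, which states the fact without proof for an arbitrary invariant $\Gamma$, passes over in silence.
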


\begin{proof}
This proposition is essentially from \cite{Boyle}, and the only difference is that we consider the subsystem of $\alpha$ which restricts to an $\alpha$-invariant set $\Gamma\subset X$. Since some estimates which are applied to this proposition are also helpful for further discussion, we give the outline of the proof.
(1) follows from the observation that if $E\supset F$ then for any $x,y\in X$,
$$
d_{\alpha}^{F^t}(x, y)\le d_{\alpha}^{E^t}(x, y).
$$
(2) follows from (1) and the
observation that for $F\subset \mathbb{R}^k$ and $\mathbf{v}\in \mathbb{R}^k$,
$$
F^t\subset (F+\mathbf{v})^{t+|v|}.
$$
(3) follows from (2) and the fact: when $\alpha$ is expansive on $\Gamma$ with an expansive constant $\rho_{\Gamma}$, then for any $\rho>0$ there exists $r=r(\rho)>0$ such that for any $x,y\in \Gamma$,
$$
d_{\alpha}^{B(\mathbf{0}, r)}(x,y)\leq\rho_{\Gamma} \Longrightarrow d(x,y)\leq\rho.
$$
In fact, for any expansive set $F$ on $\Gamma$ with expansive constant $\rho_{F,\Gamma}$ and thickness $t(F, \rho_{F,\Gamma})$, from the above fact, we can take $\rho_{\Gamma}$ as new expansive constant with $t=t(F, \rho_{F,\Gamma})+r(\rho_{F,\Gamma})+\sqrt{k}$ as the corresponding thickness of $F$ on $\Gamma$.
\end{proof}

\subsection{Shadowing}

 Let $\mathbf{n}=(n_1,\cdots, n_k)\in F\cap \mathbb{Z}^k$. For each $1\le i\le k$, let $\mathbf{n}_{i_+}$ (resp., $\mathbf{n}_{i_-}$) be the element of $F\cap \mathbb{Z}^k$ which is nearest to $\mathbf{n}$ in the $i$-th positive (resp., negative) direction,  and if there is no such  $\mathbf{n}_{i_+}$ (resp., $\mathbf{n}_{i_-}$), then let $\mathbf{n}_{i_+}=\mathbf{n}$ (resp., $\mathbf{n}_{i_-}=\mathbf{n}$).
Hence, the set
$$
\{\mathbf{n}\}\cup\{\mathbf{n}_{i_+}, \mathbf{n}_{i_-}:1\le i\le k\}
$$
 is a ``small" neighborhood of  $\mathbf{n}$ in $F\cap \mathbb{Z}^k$ consisting of  $\mathbf{n}$ and its adjacent elements along the axis directions.

%Now we introduce the definitions of pseudo orbits and shadowing property for the restriction of $\alpha$ to $F$ on $\Gamma$.
Let $\delta>0$ and $\varepsilon>0$, a set of points $\xi=\{x_{\mathbf{n}} : \mathbf{n}\in  F\cap\mathbb{Z}^k\}$ is called a $\delta$-\emph{pseudo orbit of $F$ for $\alpha$ on $\Gamma$} if $\xi\subset \Gamma$ and
$$
\sup_{\mathbf{n}\in F\cap\mathbb{Z}^k}\max_{1\le i\le k}\max\{d(x_{\mathbf{n}}, \alpha^{\mathbf{n}-\mathbf{n}_{i_-}}(x_{\mathbf{n}_{i_-}})), d(x_{\mathbf{n}_{i_+}}, \alpha^{\mathbf{n}_{i_+}-\mathbf{n}}(x_{\mathbf{n}}))\}\le \delta.
$$
In particular, when $F=\mathbb{R}^k$, the above inequality becomes
$$
\sup_{\mathbf{n}\in \mathbb{Z}^k}\max_{1\le i\le k}d(x_{\mathbf{n}+\mathbf{e}_{i}}, \alpha^{\mathbf{e}_{i}}(x_{\mathbf{n}}))\le \delta.
$$
A point $x\in X$ is called $\varepsilon$-\emph{shadows} the above pseudo orbit $\xi$ if
$$
\sup_{\mathbf{n}\in F\cap\mathbb{Z}^k}d(x_{\mathbf{n}}, \alpha^{\mathbf{n}}(x))\le \varepsilon.
$$

\begin{definition}

(1) We say that $\alpha$ \emph{has the shadowing property on $\Gamma$} provided for any $\varepsilon>0$ there exists $\delta>0$ such that every $\delta$-pseudo orbit for $\alpha$ in $\Gamma$ can be $\varepsilon$-shadowed by some point $x\in X$. In particular, when $\Gamma=X$, we say that $\alpha$ \emph{has the shadowing property}.

(2) We say that $F$ \emph{has the shadowing property for $\alpha$} on $\Gamma$ provided  there exists  $t>0$ satisfying the following property: for any $\varepsilon>0$ there exists $\delta>0$ such that every $\delta$-pseudo orbit of $F^t$ for $\alpha$ in $\Gamma$ can be $\varepsilon$-shadowed by some point $x\in X$. In particular, when $\Gamma=X$, we say that $F$ \emph{has the shadowing property} for $\alpha$.
\end{definition}

\begin{proposition}\label{BP2}
 Assume that $F$ has the shadowing property for $\alpha$ on $\Gamma$.
Then every  translate of $F$ by a vector with integer entries  also has the shadowing property for $\alpha$ on $\Gamma$, i.e.,  for any $\mathbf{m}\in \mathbb{Z}^k$, $F+\mathbf{m}$ has the shadowing property on $\Gamma$.
\end{proposition}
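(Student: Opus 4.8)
The plan is to exploit the fact that, for an integer translation vector $\mathbf{m}\in\mathbb{Z}^k$, thickening commutes with translation and the integer lattice is left invariant, so that a pseudo orbit of $F+\mathbf{m}$ is nothing but a relabelling of a pseudo orbit of $F$. First I would record the two elementary identities that drive everything: since $\mathrm{dist}(\mathbf{v},F+\mathbf{m})=\mathrm{dist}(\mathbf{v}-\mathbf{m},F)$ one has $(F+\mathbf{m})^t=F^t+\mathbf{m}$, and because $\mathbf{m}$ has integer entries, $(F^t+\mathbf{m})\cap\mathbb{Z}^k=(F^t\cap\mathbb{Z}^k)+\mathbf{m}$. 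I would therefore keep the very same thickness $t$ that witnesses the shadowing property of $F$ as the thickness for $F+\mathbf{m}$.

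Next I would check that the adjacency structure used in the definition of a pseudo orbit is translation invariant. For $\mathbf{n}\in F^t\cap\mathbb{Z}^k$ the nearest neighbour in the $i$-th positive (resp. negative) direction, computed inside $(F+\mathbf{m})^t\cap\mathbb{Z}^k$ at the point $\mathbf{n}+\mathbf{m}$, is exactly $\mathbf{n}_{i_+}+\mathbf{m}$ (resp. $\mathbf{n}_{i_-}+\mathbf{m}$), because adding $\mathbf{m}$ is a bijection of $F^t\cap\mathbb{Z}^k$ onto $(F+\mathbf{m})^t\cap\mathbb{Z}^k$ preserving order along each axis. Given a $\delta$-pseudo orbit $\eta=\{y_{\mathbf{p}}:\mathbf{p}\in(F+\mathbf{m})^t\cap\mathbb{Z}^k\}\subset\Gamma$ of $F+\mathbf{m}$, I would set $x_{\mathbf{n}}:=y_{\mathbf{n}+\mathbf{m}}$ for $\mathbf{n}\in F^t\cap\mathbb{Z}^k$. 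Using $(\mathbf{n}+\mathbf{m})-(\mathbf{n}_{i_-}+\mathbf{m})=\mathbf{n}-\mathbf{n}_{i_-}$ and its positive-direction analogue, each term of the defining supremum for $\eta$ at $\mathbf{n}+\mathbf{m}$ equals the matching term for $\xi=\{x_{\mathbf{n}}\}$ at $\mathbf{n}$; hence $\xi\subset\Gamma$ is a $\delta$-pseudo orbit of $F^t$.

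Finally I would invoke the shadowing property of $F$: there is $x\in X$ with $d(x_{\mathbf{n}},\alpha^{\mathbf{n}}(x))\le\varepsilon$ for all $\mathbf{n}\in F^t\cap\mathbb{Z}^k$. Setting $z:=\alpha^{-\mathbf{m}}(x)$ and writing a general index of $\eta$ as $\mathbf{p}=\mathbf{n}+\mathbf{m}$, the commutativity of the action gives $\alpha^{\mathbf{p}}(z)=\alpha^{\mathbf{n}}(\alpha^{\mathbf{m}}(z))=\alpha^{\mathbf{n}}(x)$, so that $d(y_{\mathbf{p}},\alpha^{\mathbf{p}}(z))=d(x_{\mathbf{n}},\alpha^{\mathbf{n}}(x))\le\varepsilon$; that is, $z$ $\varepsilon$-shadows $\eta$. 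Assigning to each $\varepsilon$ the same $\delta$ that works for $F$ then shows $F+\mathbf{m}$ has the shadowing property on $\Gamma$. There is no genuine analytic obstacle here; the only point demanding care is the bookkeeping in the middle step, namely verifying that the nearest-neighbour operation commutes with the integer translation — which is exactly where the hypothesis $\mathbf{m}\in\mathbb{Z}^k$, rather than an arbitrary real vector, is used.
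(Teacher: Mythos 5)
Your proposal is correct and follows essentially the same route as the paper's proof: keep the same thickness $t$, relabel the given pseudo orbit of $(F+\mathbf{m})^t$ by the shift $\mathbf{n}\mapsto\mathbf{n}+\mathbf{m}$ to get a pseudo orbit of $F^t$, shadow it, and translate the shadowing point back by $\alpha^{-\mathbf{m}}$. You merely spell out the bookkeeping the paper declares ``clear'' (that $(F+\mathbf{m})^t=F^t+\mathbf{m}$, that integrality of $\mathbf{m}$ makes the lattice points and the nearest-neighbour structure translation-equivariant), and your shadowing point $z=\alpha^{-\mathbf{m}}(x)$ corrects what is evidently a typo in the paper, which writes $x=\alpha^{-\mathbf{n}}(y)$ where $\alpha^{-\mathbf{m}}(y)$ is meant.
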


\begin{proof}
Let $F$ have the shadowing property for $\alpha$ on $\Gamma$, then there exists a thickness $t$ of $F$ satisfying the following \emph{property}: for any $\varepsilon>0$ there exists $\delta=\delta(\varepsilon)>0$ such that every $\delta$-pseudo orbit of $F^t$ for $\alpha$ in $\Gamma$ can be $\varepsilon$-shadowed by some point $x\in X$.

Let $\mathbf{m}\in \mathbb{Z}^k$. We will show that $F+\mathbf{m}$ has the shadowing property on $\Gamma$ with the same thickness $t$  as that of $F$. For $\varepsilon>0$, let $\delta>0$ be as above and $\xi=\{x_{\mathbf{n}} : \mathbf{n}\in  (F+\mathbf{m})^t\cap\mathbb{Z}^k\}$ be a $\delta$-pseudo orbit of $(F+\mathbf{m})^t$ in $\Gamma$ for $\alpha$. Let $y_{\mathbf{n}}=x_{\mathbf{n+m}}$ for $\mathbf{n}\in  F^t\cap\mathbb{Z}^k$. It is clearly that $\eta=\{y_{\mathbf{n}} : \mathbf{n}\in  F^t\cap\mathbb{Z}^k\}$ is a $\delta$-pseudo orbit of $F^t$ in $\Gamma$ for $\alpha$. So there exists a point $y\in X$ which $\varepsilon$-shadows $\eta$. Let $x=\alpha^{-\mathbf{n}}(y)$. It is clear that $\xi$ can be $\varepsilon$-shadowed by $x$.
\end{proof}

\subsection{Topologically Anosov}
Combining expansiveness and shadowing together, one has another notion named topologically Anosov.
\begin{definition}
(1) We say that $\alpha$ \emph{is topologically Anosov on $\Gamma$} provided $\alpha$ is expansive and has the shadowing property on $\Gamma$. In particular, when $\Gamma=X$, we say that $\alpha$ \emph{is topologically Anosov}.

(2) We say that $F$ \emph{is topologically Anosov on $\Gamma$ for $\alpha$} provided $F$ is expansive and has the shadowing property for $\alpha$ on $\Gamma$. In particular, when $\Gamma=X$, we say that $F$ \emph{is topologically Anosov} for $\alpha$.
\end{definition}

From now on we shall only be concerned with subsets that are subspaces of $\mathbb{R}^k$.
In order to discuss sets of subspaces, we recall the Grassmann manifold $\mathbb{G}_j=\mathbb{G}_{j,k}$
of all $j$-dimensional subspaces (or $j$-planes) of $\mathbb{R}^k$. The topology of $\mathbb{G}_j$ is induced
by the metric for which the distance between two subspaces is the Hausdorff metric
distance between their intersections with the unit sphere in $\mathbb{R}^k$. Then $\mathbb{G}_j$ is a
compact manifold of dimension $j(k-j)$.

\begin{definition}
For each $1\le j\le k$, define
\begin{align*}
\mathbb{E}_j(\alpha, \Gamma)&=\{V\in \mathbb{G}_j : V \text{ is expansive on } \Gamma \text{ for }\alpha\};\\
\mathbb{S}_j(\alpha, \Gamma)&=\{V\in \mathbb{G}_j: V \text{ has the shadowing property  on } \Gamma \text{ for }\alpha\};\\
\mathbb{A}_j(\alpha, \Gamma)&=\{V\in \mathbb{G}_j: V \text{ is topologically Anosov  on } \Gamma \text{ for }\alpha\}.
\end{align*}
\end{definition}

Clearly, we have that
$\mathbb{A}_j(\alpha, \Gamma)=\mathbb{E}_j(\alpha, \Gamma)\cap \mathbb{S}_j(\alpha, \Gamma).$
From \cite{Boyle}, $\mathbb{E}_j(\alpha, \Gamma)$ is always open in $\mathbb{G}_j$.
 %In particular, when $\Gamma=X$, we replace the above notations by $\mathbb{E}_j(\alpha)$, $\mathbb{S}_j(\alpha)$ and $\mathbb{A}_j(\alpha)$, respectively.

\begin{proposition}\label{BP3}

 (1) If $V\in \mathbb{E}_j(\alpha, \Gamma)$ and $W$ is a subspace of $\mathbb{R}^k$ containing $V$, then $W\in \mathbb{E}_{\dim W}(\alpha, \Gamma)$. Hence, once $\mathbb{E}_j(\alpha, \Gamma)\neq \emptyset$ for some $j$ then $\mathbb{E}_i(\alpha, \Gamma)\neq\emptyset,\;j\le i\le k$.

(2) If $V\in \mathbb{A}_j(\alpha, \Gamma)$,  $W$ is a subspace of $\mathbb{R}^k$ containing $V$, then $W\in \mathbb{A}_{\dim W}(\alpha, \Gamma)$. Hence, once $\mathbb{A}_j(\alpha, \Gamma)\neq \emptyset$ for some $j$ then $\mathbb{A}_i(\alpha, \Gamma)\neq\emptyset,\;j\le i\le k$.

\end{proposition}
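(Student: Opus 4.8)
\emph{Part (1) and the reductions.} The first assertion of (1) is immediate from Proposition \ref{BP1}(1): since $V\subseteq W$ as subsets of $\mathbb{R}^k$, taking $F=V$ and $E=W$ shows that $W$ is expansive on $\Gamma$, i.e. $W\in\mathbb{E}_{\dim W}(\alpha,\Gamma)$. For the ``Hence'' in (1), given $V\in\mathbb{E}_j(\alpha,\Gamma)$ and any $i$ with $j\le i\le k$, enlarge $V$ to a subspace of dimension $i$ by adjoining basis vectors; it is expansive by what we just proved, so $\mathbb{E}_i(\alpha,\Gamma)\neq\emptyset$. The very same extension argument reduces the ``Hence'' in (2) to the first assertion of (2), so the real content is: \emph{if $V\in\mathbb{A}_j(\alpha,\Gamma)$ and $W\supseteq V$, then $W$ has the shadowing property on $\Gamma$} (its expansiveness is already given by (1)).

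\emph{Setup for the shadowing of $W$.} First I would fix uniform constants. Since $V$ is expansive, applying (1) to $V\subseteq\mathbb{R}^k$ shows that $\alpha$ itself is expansive on $\Gamma$ with some constant $\rho_{\Gamma}$; by Proposition \ref{BP1}(3) I may then use $\rho_{\Gamma}$ as a common expansive constant for $V$, with a thickness $a$ so that $d_{\alpha}^{V^{a}}(x,y)\le\rho_{\Gamma}$ forces $x=y$ for $x,y\in\Gamma$ (and increasing $a$ only helps, so I may assume $a$ is also a shadowing thickness for $V$). I would argue by induction on $\dim W-\dim V$, the base case being trivial. Since each inductive step again yields a space that is both expansive (by (1)) and shadowing, the conclusion of the step is ``topologically Anosov,'' so it suffices to treat $W=V\oplus\mathbb{R}\mathbf{u}$ with $\mathbf{u}$ a unit vector in $W\cap V^{\perp}$, and then iterate.

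\emph{Slicing and collapse.} Fix $\varepsilon>0$ and choose $\varepsilon_1\le\min\{\varepsilon,\rho_{\Gamma}/3\}$. By the shadowing property of $V$ there is $\delta_1>0$ so that every $\delta_1$-pseudo orbit of $V^{a}$ is $\varepsilon_1$-shadowed, and by Proposition \ref{BP2} the same holds for every integer translate $V^{a}+\mathbf{m}$. I would then thicken $W$ to $W^{t_W}$ for a large $t_W$ and fix a discrete family of integer base points $\mathbf{m}_l$ lying near the line $\mathbb{R}\mathbf{u}$, chosen fine enough that the slabs $(V^{a}+\mathbf{m}_l)\cap\mathbb{Z}^k$ cover $W^{t_W}\cap\mathbb{Z}^k$ while consecutive slabs overlap in a region still containing a full translate of $V^{a}$; using uniform continuity of the generators $f_i^{\pm1}$ I choose $\delta\le\delta_1$ so that the restriction of any $\delta$-pseudo orbit of $W^{t_W}$ to each slab is a $\delta_1$-pseudo orbit of $V^{a}+\mathbf{m}_l$. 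Shadowing the $l$-th slab gives a point $y_l$ staying within $\varepsilon_1$ of the data there. On the overlap of slabs $l$ and $l+1$, the orbits of $\alpha^{\mathbf{m}_{l+1}-\mathbf{m}_l}(y_l)$ and of $y_{l+1}$ both remain $\varepsilon_1$-close to the same pseudo-orbit points along a full $V^{a}$-translate, hence are within $2\varepsilon_1\le\rho_{\Gamma}$ of each other there; invoking expansiveness (constant $\rho_{\Gamma}$, thickness $a$) together with commutativity of the $\alpha^{\mathbf{n}}$ should force these to coincide, collapsing the family $\{y_l\}$ to a single orbit. The central point $y_0$ would then $\varepsilon_1\le\varepsilon$-shadow the whole pseudo orbit on $W^{t_W}$, giving $W\in\mathbb{S}_{\dim W}(\alpha,\Gamma)$ and hence $W\in\mathbb{A}_{\dim W}(\alpha,\Gamma)$.

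\emph{The main obstacle.} The heart of the matter is this collapse step: expansiveness is exactly what turns the independently chosen slab-shadowing points into one coherent orbit, in the spirit of Theorem 1 of \cite{Pilyugin1}. The delicate point I must handle is that expansiveness is available only on the invariant set $\Gamma$, whereas the shadowing points $y_l$ a priori lie in $X$; so I have to arrange the comparison that triggers expansiveness to be anchored at the pseudo-orbit points in $\Gamma$, and to fix all constants in the correct dependency order ($\rho_{\Gamma}\to a\to\varepsilon_1\to\delta_1\to$ net spacing $\to\delta,t_W$) so that every intermediate estimate closes. The remaining bookkeeping---guaranteeing that the slabs cover and overlap (possibly with a shadowing thickness slightly larger than the expansive thickness $a$), and the uniform-continuity estimate relating a $W^{t_W}$-pseudo orbit to its $V^{a}$-slices---is routine, and is precisely where the thickening technique of \cite{Boyle} is genuinely needed.
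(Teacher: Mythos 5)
Your proposal is correct and follows essentially the same route as the paper's own proof: part (1) from Proposition \ref{BP1}, and for part (2) covering a thickened copy of $W$ by integer translates of a thickened $V$ (each shadowed via Proposition \ref{BP2}) and then collapsing the slab shadowing points into a single orbit by expansiveness on the overlaps, which is exactly the paper's ``move $V^{t_1}$ along $W$ step by step'' scheme with matched expansive and shadowing thicknesses. The subtlety you flag---that expansiveness is stated only for pairs of points in $\Gamma$ while the shadowing points a priori lie in $X$---is present, and equally glossed over, in the paper's own argument.
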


\begin{proof}
(1) follows from Proposition \ref{BP1}. We only need to prove (2).

Let $V\in \mathbb{A}_j(\alpha, \Gamma)$ and $W$ be a subspace of $\mathbb{R}^k$ containing $V$. Note that $\mathbb{A}_j(\alpha, \Gamma)=\mathbb{E}_j(\alpha, \Gamma)\cap \mathbb{S}_j(\alpha, \Gamma)$. By $V\in \mathbb{E}_j(\alpha, \Gamma)$, there are constants $\rho>0$ and thickness $t_1>0$ such that for any $x,y\in \Gamma$,
\begin{equation}\label{expan}
d_{\alpha}^{V^{t_1}}(x, y)\le \rho \Longrightarrow x=y.
\end{equation}
By $V\in \mathbb{S}_j(\alpha, \Gamma)$, there exists  $t_2>0$ satisfying the following property: for any $\varepsilon_1>0$ there exists $\delta_1>0$ such that every $\delta_1$-pseudo orbit of $V^{t_2}$ for $\alpha$ in $\Gamma$ can be $\varepsilon_1$-shadowed by some point $x\in X$. Generally, we do not know $t_1$ and $t_2$ which is bigger. However, from the fact we have used to get (3) of Proposition \ref{BP1}, we can increase $t_1$ and meanwhile decrease $\rho$ properly such that (\ref{expan}) holds. Therefore, we can assume $t_1>t_2+\sqrt{k}$.

We firstly show that $V$ is expansive and has the shadowing property for $\alpha$ on $\Gamma$ with the same thickness $t_1$.  Since $t_1>t_2+\sqrt{k}$, we can take a neighborhood $\mathbf{U}(\mathbf{0})$ of $\mathbf{0}$ in $\mathbb{R}^k$ such that
$$
V^{t_1}=\bigcup_{\mathbf{n}\in \mathbf{U}(\mathbf{0})\cap\mathbb{Z}^k}(V+\mathbf{n})^{t_2}.
$$
For each $\mathbf{n}\in \mathbf{U}(\mathbf{0})\cap\mathbb{Z}^k$, denote $V_{\mathbf{n}}=V+\mathbf{n}$. By Proposition \ref{BP2}, each above $V_{\mathbf{n}}$ has the shadowing property with the thickness $t_2$.
Since the generators $f_i, 1\le j\le k$, of $\alpha$ are equi-continuous, for any $0<\varepsilon<\frac{\rho}{4}$ we can take positive numbers $\varepsilon_1=\varepsilon_1(\varepsilon), \delta_1=\delta_1(\varepsilon_1)$ and $\delta$ satisfying the following properties:
for any $\delta$-pseudo orbit $\xi=\{x_{\mathbf{n}} : \mathbf{n}\in  V^{t_1}\cap\mathbb{Z}^k\}$ in $\Gamma$,  its subset $\xi_{\mathbf{n}}=\{x_{\mathbf{m}} : \mathbf{m}\in  V_{\mathbf{n}}^{t_2}\cap\mathbb{Z}^k\}$, for each $\mathbf{n}\in \mathbf{U}(\mathbf{0})\cap\mathbb{Z}^k$,  is a $\delta_1$-pseudo orbit of $V_{\mathbf{n}}^{t_2}$, hence $\xi_{\mathbf{n}}$ can be $\varepsilon_1$-shadowed by some point $z_{\mathbf{n}}\in X$, and therefore $\xi$ can be $\varepsilon$-shadowed by  $\alpha^{-\mathbf{n}}(z_{\mathbf{n}})$. By the choices of $\varepsilon$, we get that $\alpha^{-\mathbf{n}}(z_{\mathbf{n}})=z_{\mathbf{0}}$ for each $\mathbf{n}\in \mathbf{U}(\mathbf{0})\cap\mathbb{Z}^k$. Thus, we conclude that $\xi$ can be $\varepsilon$-shadowed by the point $z_{\mathbf{0}}$. Now, we get that $V$ is expansive and has the shadowing property for $\alpha$ on $\Gamma$ with the same thickness $t_1$. In this case we also say that $\alpha$ \emph{is topologically Anosov on} $V^{t_1}$ on $\Gamma$.

Using a similar method to move  $V^{t_1}$ along $W$ step by step, we can get larger and larger sets on which $\alpha$ is topologically Anosov on $\Gamma$. Eventually, we get that $\alpha$ is topologically Anosov on $W^{t_1}$  on $\Gamma$. Hence, $W\in \mathbb{E}_{\dim W}(\alpha, \Gamma)$.

This completes the proof of (2).
\end{proof}

\begin{remark}\label{BP4}
 Using the similar idea in proving (2) of Proposition \ref{BP3}, we have the following property:
if $F$ is topologically Anosov for $\alpha$ on $\Gamma$  then every translate $F+\mathbf{v}, \mathbf{v}\in \mathbb{R}^k,$ of $F$  is also topologically Anosov for $\alpha$ on $\Gamma$. It is a generalization of the result in Proposition \ref{BP2} under a stronger condition.
\end{remark}

\section{\texorpdfstring{Shadowing for certain classical $\mathbb{Z}^{k}$-actions}{Shadowing for certain classical Z\^{}k-actions}}

In this section, we consider some classical examples for $\mathbb{Z}^{k}$-actions. One type is symbolic system, the other type is the ``linear" system on torus.

\subsection{Symbolic systems}

Let $\mathcal{A}$ be a finite alphabet, equipped with the discrete topology. Define a metric $\varrho$ on $\mathcal{A}$ by $\varrho(a,b)=1$ if $a\neq b$ and $0$ otherwise.
Put $X=\mathcal{A}^{\mathbb{Z}^k}$, equipped with the
product topology. Thus a point $x\in X$ has the form
$(x(\mathbf{i}))_{\mathbf{i}\in \mathbb{Z}^k}$.
Define a metric $d$ on $X$  by
\begin{equation}\label{metric}
d(x, y)=\sum_{\mathbf{i}\in \mathbb{Z}^k}\frac{\varrho(x(\mathbf{i}),y(\mathbf{i}))}{2^{|\mathbf{i}|}}
\end{equation}
for $x=(x(\mathbf{i}))_{\mathbf{i}\in \mathbb{Z}^k}, y=(y(\mathbf{i}))_{\mathbf{i}\in \mathbb{Z}^k}$.
Define the \emph{shift action} $\alpha$ of $\mathbb{Z}^k$ on $X$ by
$$
(\alpha^{\mathbf{n}}x)(\mathbf{i}) = x(\mathbf{n}+\mathbf{i}).
$$

From \cite{Boyle}, $\alpha$ is expansive and $\mathbb{E}_j(\alpha)=\emptyset$ for $0\le j \le k-1$ (i.e., there is no proper subspace $V\subset \mathbb{R}^{k}$ can be expansive for $\alpha$). So, $\mathbb{A}_j(\alpha)=\emptyset$ for $0\le j \le k-1$. Now we consider the shadowing property for subspaces of $\mathbb{R}^{k}$.

\begin{proposition}\label{prop1}
Let $\alpha$ be the $\mathbb{Z}^k$ shift action on $X=\mathcal{A}^{\mathbb{Z}^k}$. Then $\alpha$ has the shadowing property and hence is topologically Anosov. Moreover, $\mathbb{S}_j(\alpha)=\mathbb{G}_j$ for $0\le j \le k-1$.
\end{proposition}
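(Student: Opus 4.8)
The plan is to establish the full-action shadowing first and then reuse the same \emph{central-symbol} construction, upgraded by a gluing argument, for the subspace statement. For the full action ($F=\mathbb{R}^k$) I would exploit the one-to-one dictionary between the metric $d$ and symbol agreement: from (\ref{metric}), $d(y,z)<2^{-L}$ forces $y(\mathbf{i})=z(\mathbf{i})$ for all $|\mathbf{i}|\le L$, while agreement on $\{|\mathbf{i}|\le L\}$ gives $d(y,z)\le\sum_{|\mathbf{i}|>L}2^{-|\mathbf{i}|}$. Given a $\delta$-pseudo orbit $\xi=\{x_{\mathbf{n}}\}_{\mathbf{n}\in\mathbb{Z}^k}$ I define the candidate shadowing point by $x(\mathbf{n}):=x_{\mathbf{n}}(\mathbf{0})$. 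If $\delta<2^{-L}$, each one-step relation $d(x_{\mathbf{n}+\mathbf{e}_i},\alpha^{\mathbf{e}_i}x_{\mathbf{n}})\le\delta$ reads $x_{\mathbf{n}+\mathbf{e}_i}(\mathbf{j})=x_{\mathbf{n}}(\mathbf{j}+\mathbf{e}_i)$ for $|\mathbf{j}|\le L$. Chaining these relations along an axis path of length $|\mathbf{i}|_1$ from $\mathbf{n}$ to $\mathbf{n}+\mathbf{i}$, the window of validity shrinks by one per step, so as long as $L\ge\sqrt{k}\,R$ one obtains $x(\mathbf{n}+\mathbf{i})=x_{\mathbf{n}}(\mathbf{i})$ for all $|\mathbf{i}|\le R$. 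Choosing $R$ with $\sum_{|\mathbf{i}|>R}2^{-|\mathbf{i}|}\le\varepsilon$ and then $\delta<2^{-\sqrt{k}R}$ makes $x$ an $\varepsilon$-shadowing point; since $\alpha$ is expansive by \cite{Boyle}, it is topologically Anosov.

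For the ``moreover'', fix $V\in\mathbb{G}_j$ with $0\le j\le k-1$ and a thickness $t=t(V)$ to be chosen large. Since $V^t$ is convex, the nearest neighbour $\mathbf{n}_{i_\pm}$ of an interior lattice point $\mathbf{n}$ is exactly $\mathbf{n}\pm\mathbf{e}_i$ whenever the latter lies in $V^t$, so a $\delta$-pseudo orbit $\{x_{\mathbf{n}}\}_{\mathbf{n}\in V^t\cap\mathbb{Z}^k}$ again satisfies honest one-step relations. Given $\varepsilon$, choose $R$ as above and try to build $x$ by gluing the patches $\{(\mathbf{n}+\mathbf{i},\,x_{\mathbf{n}}(\mathbf{i})):|\mathbf{i}|\le R\}$ over all $\mathbf{n}\in V^t\cap\mathbb{Z}^k$ and extending $x$ arbitrarily off their union. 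This is well defined precisely when the patches agree on overlaps, i.e. $x_{\mathbf{n}}(\mathbf{i})=x_{\mathbf{n}'}(\mathbf{i}')$ whenever $\mathbf{n}+\mathbf{i}=\mathbf{n}'+\mathbf{i}'$ with $\mathbf{n},\mathbf{n}'\in V^t\cap\mathbb{Z}^k$ and $|\mathbf{i}|,|\mathbf{i}'|\le R$; as in the previous paragraph this agreement follows by chaining the one-step relations along an axis path from $\mathbf{n}$ to $\mathbf{n}'$ \emph{inside} $V^t\cap\mathbb{Z}^k$, provided $\delta$ is small relative to $R$ and the length of that path.

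The step I expect to be the main obstacle is the purely lattice-geometric claim underlying this gluing: for $t(V)$ large enough, $V^t\cap\mathbb{Z}^k$ is connected by unit axis-steps that remain in $V^t$, with a bound on the connecting path length that is a fixed (essentially linear) function of the endpoint distance $|\mathbf{n}-\mathbf{n}'|\le 2R$. Translation invariance is unavailable here, as $V\cap\mathbb{Z}^k$ may be trivial for irrational $V$, so I would argue through the projections $\pi_V(\mathbf{n})$ and $\pi_{V^\perp}(\mathbf{n})$: the slab condition $|\pi_{V^\perp}(\mathbf{n})|\le t$ selects in each lattice ``column'' a box of integer points of diameter comparable to $t$, and for $t$ large the boxes of adjacent columns overlap, so one can pass from column to column by a single axis-step after a bounded adjustment, yielding a path of length $O(|\mathbf{n}-\mathbf{n}'|)$. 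Granting this, set $L$ larger than $R$ plus the maximal such path length for endpoints at distance $2R$, take $\delta<2^{-L}$, and all overlaps are consistent; the glued $x$ then $\varepsilon$-shadows $\xi$, giving $V\in\mathbb{S}_j(\alpha)$ and hence $\mathbb{S}_j(\alpha)=\mathbb{G}_j$. The extreme case $j=0$, where $V^t$ is a ball and $\xi$ is a finite pseudo orbit, is a degenerate instance of the same gluing.
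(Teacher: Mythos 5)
Your proposal is correct and takes essentially the paper's own route: the paper likewise defines the shadowing point by the central symbols $x^{*}(\mathbf{i})=x_{\mathbf{i}}(\mathbf{0})$, using that $\delta<2^{-L}$ forces symbol agreement on an $L$-window of the origin, and then dispatches the subspace case with the words ``by a similar discussion.'' The lattice-connectivity lemma you flag as the main obstacle is precisely what that phrase hides, and it does hold --- for instance, since $\sum_{i}\langle w,\mathbf{e}_i\rangle^{2}=|w|^{2}$ for $w=\pi_{V^{\perp}}(\mathbf{n})$, some axis step decreases $|\pi_{V^{\perp}}|^{2}$ by at least $1$ whenever $|\pi_{V^{\perp}}(\mathbf{n})|\ge\sqrt{k}$, so one can greedily descend inside $V^{t}$ to the core slab $V^{\sqrt{k}}$ and then staircase the straight segment between core points, giving path lengths affine in $|\mathbf{n}-\mathbf{n}'|$ for $t\ge 3\sqrt{k}$ --- so your gluing argument legitimately completes the paper's sketch.
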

\begin{proof}
From the definition of the metric $d$ on $X=\mathcal{A}^{\mathbb{Z}^k}$, we can see that two points are close in this metric provided their coordinates agree in a large neighborhood of the origin, and in particular, $d(x, y)< 1$ implies that $x(\mathbf{0})= y(\mathbf{0})$. Precisely, for any $\varepsilon>0$ there exists $r>0$ such that for $x=(x(\mathbf{i}))_{\mathbf{i}\in \mathbb{Z}^k}, y=(y(\mathbf{i}))_{\mathbf{i}\in \mathbb{Z}^k}\in X$,
$$
x(\mathbf{i})=y(\mathbf{i}) \text{ for }\mathbf{i}\in B(\mathbf{0}, r)\Longrightarrow d(x,y)\le \varepsilon.
$$
For the above $r$, take $\delta>0$
$$
d(x,y)\le \delta\Longrightarrow x(\mathbf{i})=y(\mathbf{i}) \text{ for }\mathbf{i}\in B(\mathbf{0}, r+2\sqrt{k}).
$$
By a standard discussion we get that for any $\delta$-pseudo orbit $\xi=\{x_{\mathbf{n}} : \mathbf{n}\in \mathbb{Z}^k\}$ of $\alpha$, there exists a point $x^*=(x^*(\mathbf{i}))_{\mathbf{i}\in \mathbb{Z}^k}$, which is defined by $x^*(\mathbf{i})=x_{\mathbf{i}}({\mathbf{0}})$,  $\varepsilon$-shadows it. Therefore, $\alpha$ has the shadowing property. Moreover, since $\alpha$ is expansive (assume $\rho$ is an expansive constant), once $\varepsilon<\frac{\rho}{2}$ we conclude that the shadowing point $x^*$ is unique.

By a similar discussion, we get $\mathbb{S}_j(\alpha)=\mathbb{G}_j, 0\le j \le k-1$ immediately.
\end{proof}

Now we consider the example due to Ledrappier(\cite{Ledrappier}).

\begin{example}\label{example1}
Let $\mathcal{A}=\mathbb{Z}/2\mathbb{Z}$ and $\alpha$ be the $\mathbb{Z}^2$ shift action on $X=\mathcal{A}^{\mathbb{Z}^2}$. Consider the compact $\alpha$-invariant subset $\Gamma$ of $X$
defined by the condition
\begin{equation}\label{Ledrappier}
 x((i, j)) + x((i+1, j)) + x((i, j+1))=0\;\;(\emph{\mbox{mod} }2)
\end{equation}
for all $i, j \in \mathbb{Z}$. Let $L_{\theta}$ denote the line making angle $\theta$ with
the positive horizontal axis, then $\mathbb{G}_1=\{L_{\theta} : 0\le \theta <\pi\}$. From \cite{Boyle}, $\alpha$ is expansive on $\Gamma$ and
$$
\mathbb{E}_1(\alpha, \Gamma)=\mathbb{G}_1\setminus\{L_{0},L_{\frac{\pi}{4}},L_{\frac{3\pi}{4}}\}.
$$
By a similar discussion as in Proposition \ref{prop1}, we have that $\alpha$ has the shadowing property on $\Gamma$, and $\mathbb{S}_1(\alpha, \Gamma)=\mathbb{G}_1$.
\end{example}

\subsection{\texorpdfstring{Linear systems on $\mathbb{R}^m$ and linear reduced systems on $\mathbb{T}^m$}{Linear systems on R\^{}m and linear reduced systems on T\^{}m}}

In \cite{Pilyugin1}, the shadowing property for linear $\mathbb{Z}^k$-actions on $\mathbb{C}^m$ is considered. Let $A_i, 1\le i\le k$ be pairwise commuting upper triangular matrices with complex entries and $\alpha$ be the induced $\mathbb{Z}^k$-action on $\mathbb{C}^m$.
Denote by $\lambda_{i,j}$ the $j$th diagonal element (i.e., the $(j, j)$ entry) of $A_i$. Applying the matrix analysis technique, Pilyugin and  Tikhomirov \cite{Pilyugin1} gave a characterization of the linear $\mathbb{Z}^k$-actions on $\mathbb{C}^m$ which has the shadowing property.

\begin{proposition} [Theorem 2 of \cite{Pilyugin1}]
The following statements are equivalent for the above $\alpha$:

(1) $\alpha$ has the Lipschitz shadowing property in the following sense: there exists
a constant $L>0$ such that for any $\delta$-pseudo orbit $\xi=\{x_{\mathbf{n}} : \mathbf{n}\in  \mathbb{Z}^k\}$
there is a point $x$ satisfying
$$
|\alpha^\mathbf{n}(x)-x_{\mathbf{n}}|\le L\delta,\;\; \mathbf{n}\in\mathbb{Z}^k;
$$

(2) for any $j\in \{1,\cdots,m\}$ there exists $i\in \{1,\cdots,k\}$ such that $|\lambda_{i,j}|\neq1$;

(3) there is no vector $v\neq0$ such that
$$
A_iv=\mu_iv, \;\;i=1,\cdots,k,\;\; \text{ where } |\mu_i|=1.
$$
\end{proposition}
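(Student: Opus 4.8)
The plan is to prove $(2)\Leftrightarrow(3)$ as a purely linear-algebraic fact and then to establish $(3)\Rightarrow(1)$ and $\neg(2)\Rightarrow\neg(1)$, which carry the dynamical content. The organizing tool will be the common primary decomposition of the pairwise commuting, simultaneously upper triangular matrices $A_1,\dots,A_k$: one writes $\mathbb{C}^m=\bigoplus_\ell W_\ell$ into joint generalized eigenspaces, each $A_i$-invariant, on which $A_i|_{W_\ell}=\mu_i^{(\ell)}\mathrm{Id}+N_i^{(\ell)}$ with the $N_i^{(\ell)}$ commuting and nilpotent, and the scalar tuple $(\mu_1^{(\ell)},\dots,\mu_k^{(\ell)})$ equal to one of the diagonal tuples $(\lambda_{1,j},\dots,\lambda_{k,j})$. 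Indeed, for a common eigenvector $v$ with $A_iv=\nu_iv$, reading the top nonzero coordinate of $v$ in the triangular normal form forces $\nu_i=\lambda_{i,j_0}$ for the largest index $j_0$ with $v_{j_0}\neq0$; conversely every block $W_\ell$ contains a genuine common eigenvector, namely any nonzero element of $\bigcap_i\ker N_i^{(\ell)}$. Hence a vector $v$ as in $(3)$ exists precisely when some diagonal tuple has all entries of modulus $1$, which is exactly $\neg(2)$; this yields $(2)\Leftrightarrow(3)$.

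For $(3)\Rightarrow(1)$ I would argue block by block. Writing $A^{\mathbf{n}}=\prod_iA_i^{n_i}$ and, for a $\delta$-pseudo orbit $\{x_{\mathbf{n}}\}$, the normalized sequence $z_{\mathbf{n}}=A^{-\mathbf{n}}x_{\mathbf{n}}$, the pseudo-orbit inequality becomes $z_{\mathbf{n}+\mathbf{e}_i}-z_{\mathbf{n}}=A^{-(\mathbf{n}+\mathbf{e}_i)}\varepsilon_{\mathbf{n},i}$ with $|\varepsilon_{\mathbf{n},i}|\le\delta$. On $W_\ell$, condition $(2)$ supplies an index $i^{*}=i^{*}(\ell)$ with $|\mu_{i^{*}}^{(\ell)}|\neq1$; I would telescope $z$ purely along $\pm\mathbf{e}_{i^{*}}$ (toward $+\infty$ if $|\mu_{i^{*}}^{(\ell)}|>1$, toward $-\infty$ otherwise) and define the $W_\ell$-component $x^{(\ell)}$ of the shadowing point as the resulting limit, which one checks is independent of the base point. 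The decisive computation is that after applying $A^{\mathbf{n}}$ the long factors cancel, leaving $x_{\mathbf{n}}-A^{\mathbf{n}}x^{(\ell)}=-\sum_{t\ge0}A_{i^{*}}^{\mp(t+1)}\varepsilon_{\mathbf{n}+t\mathbf{e}_{i^{*}},\,i^{*}}$, so the bound reduces to the summability of $\sum_t\|A_{i^{*}}^{\mp(t+1)}|_{W_\ell}\|$, which is finite because $|\mu_{i^{*}}^{(\ell)}|\neq1$ makes $A_{i^{*}}^{\mp(t+1)}$ exponentially contracting, dominating the polynomial growth from the nilpotent parts $N_{i^{*}}^{(\ell)}$. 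Summing over the finitely many blocks produces a uniform Lipschitz constant $L$.

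For the converse $\neg(2)\Rightarrow\neg(1)$ I would select the block $W_{j_0}$ whose eigenvalues $\mu_i=\lambda_{i,j_0}$ all have modulus $1$, take a common left eigenvector $\psi$ with $\psi A_i=\mu_i\psi$ (extended by zero off $W_{j_0}$, so $\psi\neq0$ on $\mathbb{C}^m$), and construct a $\delta$-pseudo orbit whose $\psi$-coordinate drifts linearly along one axis while all $k$ per-step errors remain $\le\delta$. This is done by following the orbit equation exactly in the lower Jordan directions and injecting a fixed increment $\delta$ into the top of the tower at each step. Since $|\psi(A^{\mathbf{n}}x)|=|\psi(x)|$ is constant for every genuine point $x$ (as $|\mu_i|=1$), no orbit can track a coordinate tending to infinity, so shadowing fails.

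\textbf{Main obstacle.} The crux is the cancellation underlying $(3)\Rightarrow(1)$: hyperbolicity lives only along a single generator direction $i^{*}$, whereas the index set is the full lattice $\mathbb{Z}^k$, and a naive estimate through $\|A^{\mathbf{n}}\|\cdot\|A^{-\mathbf{n}}\|$ loses the Jordan polynomial factors and only gives an $\mathbf{n}$-dependent, growing bound. The argument succeeds only because telescoping along the one hyperbolic axis makes $A^{\mathbf{n}}$ meet $A^{-(\mathbf{n}+(t+1)\mathbf{e}_{i^{*}})}$ and collapse to the single contracting operator $A_{i^{*}}^{\mp(t+1)}$. Extracting a genuinely uniform $L$ this way, and verifying that the drift construction in the converse keeps the errors small in \emph{all} lattice directions despite nontrivial Jordan blocks, are the steps I expect to demand the most care.
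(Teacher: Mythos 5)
First, note that the paper contains no proof of this statement: it is imported verbatim as Theorem 2 of \cite{Pilyugin1}, so your proposal can only be measured against the cited matrix-analysis argument and against correctness on its own terms. Your positive direction $(2)\Rightarrow(1)$ is sound and is in the spirit of the cited proof: the joint generalized eigenspace decomposition, the normalization $z_{\mathbf{n}}=A^{-\mathbf{n}}x_{\mathbf{n}}$, the telescoping along the single hyperbolic generator $\mathbf{e}_{i^*}$ with the exact cancellation $A^{\mathbf{n}}A^{-(\mathbf{n}+(t+1)\mathbf{e}_{i^*})}=A_{i^*}^{-(t+1)}$, base-point independence of the limit, and the geometric-dominates-polynomial summability all check out and yield a uniform $L$. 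In $(2)\Leftrightarrow(3)$ you justify only the inclusion ``block tuples $\subseteq$ diagonal tuples''; the direction $\neg(2)\Rightarrow\neg(3)$ needs the converse, that every diagonal tuple is realized on some joint block. This is true (e.g., apply a generic linear combination $\sum c_iA_i$, whose eigenvalues are $\sum_ic_i\lambda_{i,j}$, to separate tuples and get multiset equality), but as written it is asserted rather than proved; this is a minor, repairable omission.

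The genuine gap is in $\neg(2)\Rightarrow\neg(1)$. Your construction --- inject a fixed increment $\delta w$ with $\psi(w)\neq 0$ at the ``top of the tower'' along one axis, follow the orbit equation exactly in the other lattice directions, and detect drift with the common left eigenvector $\psi$ --- fails whenever the unimodular block has nontrivial joint Jordan structure. Take $k=2$, the block $W=\mathbb{C}^2$ with
\begin{equation*}
A_1=\mu_1 I,\qquad A_2=\begin{pmatrix}\mu_2 & 1\\ 0 & \mu_2\end{pmatrix},\qquad |\mu_1|=|\mu_2|=1 .
\end{equation*}
Here the common left eigenvector is $\psi=e_2^{T}$, so ``top of the tower'' means $w=e_2$. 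But if you inject $\delta$-increments proportional to $e_2$ along direction $1$ and propagate exactly in direction $2$, the direction-$1$ error at $(n_1,n_2)$ is $\delta A_2^{n_2}(\text{phase}\cdot e_2)$, whose norm grows like $\delta|n_2|$ since $A_2^{n_2}e_2=\mu_2^{n_2}e_2+n_2\mu_2^{n_2-1}e_1$: your sequence is not a $\delta$-pseudo orbit over $\mathbb{Z}^2$ at all. Keeping all errors bounded forces the injected increments into $K=\bigcap_{i\ge 2}\ker N_i$ (here $K=\mathbb{C}e_1$), on which $\psi$ vanishes identically, so the invariant $|\psi(A^{\mathbf{n}}x)|=|\psi(x)|$ detects nothing; one can check (the bounded-error solutions of these unimodular linear recursions are polynomial-plus-bounded, and a two-variable polynomial ansatz leads to a contradiction) that no bounded-error pseudo orbit with drifting $\psi$-coordinate exists in this example. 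Worse, drift along the bottom common eigenvector is not automatically fatal either: for a single Jordan generator $A=\mu I+N$ with $Nx=\delta\mu u$, the pseudo orbit $x_n=\delta n\mu^n u$ is exactly $L$-shadowed by $x$, so absorption by Jordan vectors must be excluded. A correct converse has to choose the injected vector adapted to the joint nilpotent structure, e.g. $\zeta\in K\setminus N_1K$ (in the example $\zeta=e_1$, for which non-shadowability follows from $A^{\mathbf{n}}x=\rho(\mathbf{n})\bigl((x_1+n_2\mu_2^{-1}x_2)e_1+x_2e_2\bigr)$ having no $n_1$-dependence in the $e_1$-coefficient), and replace the $\psi$-argument by a detection mechanism that survives on $K$. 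As it stands, the two requirements of your sketch --- errors bounded in all lattice directions, and drift visible to $\psi$ --- are mutually exclusive precisely in the Jordan cases you flagged, so this direction is not merely delicate but needs a different construction.
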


The above Proposition and Theorem 1 of \cite{Pilyugin1} tell us that a linear $\mathbb{Z}^k$-action on $\mathbb{C}^m$ has the Lipschitz shadowing property if and only if there exists at least one hyperbolic 1-dimensional \emph{rational}, subspaces of $\mathbb{R}^k$, here we say the a 1-dimensional subspace $V$ of $\mathbb{R}^k$ is rational we mean that $V$ passes through some integer lattices except for $\mathbf{0}$. When each of the above generators $A_i, 1\le i\le k,$ has integer entries and whose determinant is equal to $\pm 1$, we can get, from the above theorem, a characterization of the induced $\mathbb{Z}^k$-action $\alpha$ on the torus $\mathbb{T}^m$ which has the shadowing property.

 A natural question is as follows.
\begin{question}
Can we give a condition under which a smooth $\mathbb{Z}^k$-action $\alpha$ on a closed Riemannian manifold has the shadowing property?  Furthermore, can we get a characterization for the subsystems of $\alpha$ which has the shadowing property?
\end{question}

The main task of this paper is to answer the above question, and the next section is devoted to this topic. From the main results (Theroem A and Theorem B) of next section, we have more information concerning shadowing properties for the following particular examples.

\begin{example}\label{example3}
Let $\alpha$ be the ${\mathbb{Z}}^2$-action  on the torus
$\mathbb{T}^2$ with the generators $\{f_1,f_2\}$ which are induced by the hyperbolic automorphisms
$$
A_1=\left(
         \begin{array}{rr}
              2 &1\\
              1 &1
          \end{array} \right)\;\; \mbox{and}\;\;
          A_2=A_1^{-1}=\left(
         \begin{array}{rr}
              1 &-1\\
              -1 &2
          \end{array} \right),
$$
 respectively. The eigenvalues
of $A_1$ are $\lambda_1=\frac{3+\sqrt{5}}{2}$ and $\lambda_2=\frac{3-\sqrt{5}}{2}$, and let $E_1$ and $E_2$ be the corresponding
eigenspaces in ${\mathbb{R}}^2$. Since $A_2$ is the inverse of $A_1$, $A_2$ has eigenvalues $\mu_1=\lambda_2$ and $\mu_2=\lambda_1$, with
eigenspaces $F_1=E_2$ and $F_2=E_1$. We can see that
$$
\mathbb{E}_1(\alpha)=\mathbb{S}_1(\alpha)=\mathbb{A}_1(\alpha)=\mathbb{G}_1\setminus \{L_1\},
$$
 where $L_1$ is the line in $\mathbb{G}_1$ with slope 1. $\alpha$ does not have the shadowing property along $L_1$ since $\alpha^{(1,1)}=Id$. Since $\mathbb{A}_1(\alpha)\neq \emptyset$, by Proposition \ref{BP3}, $\alpha$ is topologically Anosov.
\end{example}

\begin{example}\label{example3.5}
Let $\alpha$ be the ${\mathbb{Z}}^2$-action  on the torus
$\mathbb{T}^3$ with the generators $\{f_1,f_2\}$ which are induced by the partially hyperbolic automorphisms
$$
A_1=\left(
         \begin{array}{rrr}
               1 &0 &0\\
              0 &2 &1\\
              0 &1 &1
          \end{array} \right)\;\; \mbox{and}\;\;
          A_2=A_1^{-1}=\left(
         \begin{array}{rrr}
              1 &0 &0\\
              0 &1 &-1\\
              0 &-1 &2
          \end{array} \right),
$$
 respectively. We can see that $\mathbb{E}_1(\alpha)=\mathbb{S}_1(\alpha)=\mathbb{A}_1(\alpha)=\emptyset$, and $\alpha$ does not have the shadowing property.
 Let $L_1$ be the line in $\mathbb{G}_1$ with slope 1, then for any $L\in \mathbb{G}_1\setminus \{L_1\}$, $\alpha$ has the quasi-shadowing property (see Definition \ref{QS}) along $L$.
\end{example}

\begin{example}\label{example4}[Example 2.10 of \cite{Boyle}]
Let
$$
A_1=\left(
         \begin{array}{rr}
              1 &1\\
              2 &1
          \end{array} \right)\;\; \mbox{and}\;\;
          A_2=\left(
         \begin{array}{rr}
              2 &1\\
              3 &2
          \end{array} \right).
$$
Clearly, they induce automorphisms of $\mathbb{T}^2$ (which do not commute). The eigenvalues
of $A_1$ are $\lambda_1=1+\sqrt{2}$ and $\lambda_2=1-\sqrt{2}$, and let $E_1$ and $E_2$ be the corresponding
eigenspaces in ${\mathbb{R}}^2$. Similarly, $A_2$ has eigenvalues $\mu_1=2+\sqrt{3}$ and $\mu_2=2-\sqrt{3}$, with
eigenspaces $F_1$ and $F_2$.

Let
$$
B_1=A_1\otimes I=\left(
         \begin{array}{rrrr}
              1&1&0&0\\
              2&1&0&0\\
              0&0& 1&1\\
              0&0&2&1
          \end{array} \right)\;\; \mbox{and}\;\;
B_2=I\otimes A_2=\left(
         \begin{array}{rrrr}
              2&0&1&0\\
              0&2&0&1\\
              3&0& 2&0\\
              0&3&0&2
          \end{array} \right).
$$
Both $B_1$ and $B_2$ act on
$$
{\mathbb{R}}^2\otimes {\mathbb{R}}^2=\bigoplus_{i,j=1}^2E_i\otimes F_j
$$
and they commute. Hence
they define a ${\mathbb{Z}}^2$-action on $\mathbb{T}^4$  given by $\alpha^{(n_1,n_2)} = B_1^{n_1}B_2^{n_2} = A_1^{n_1}\otimes A_2^{n_2}$. The
1-dimensional spaces $E_i\otimes F_j$ are common eigenspaces for $B_1$ and $B_2$, and $\alpha^{(n_1,n_2)}$  on
$E_i\otimes F_j$ is multiplication by $\lambda_i^{n_1}\mu_j^{n_2}$.

For $i=1, 2$, let $L_i$ be the line in ${\mathbb{R}}^2$ with slope $-\log |\lambda_i|= \log |\mu_1|$. We can see that
$$
\mathbb{E}_1(\alpha)=\mathbb{S}_1(\alpha)=\mathbb{A}_1(\alpha)=\mathbb{G}_1\setminus \{L_1, L_2\}.
$$
Similar to that in Example \ref{example3}, $\alpha$ does not have the shadowing property along $L_1$ and $L_2$. Since $\mathbb{A}_1(\alpha)\neq \emptyset$, by Proposition \ref{BP3}, $\alpha$ is topologically Anosov.
%For fixed $t>0$, consider the strip $L^t_i$
%of width $t$ around $L_i$. A lattice point $(n_1, n_2)$ occurs in $L^t_i$ exactly when
%$$
%|n_1\log |\lambda_i|+n_2 \log |\mu_1||\le t|\log |\lambda_i||:=c,
%$$
%which is equivalent to $e^{-c}\le\lambda_i^{n_1}\mu_1^{n_2}\le e^c$.
\end{example}

\section{\texorpdfstring{Shadowing in smooth $\mathbb{Z}^{k}$-actions}{Shadowing in smooth Z\^{}k-actions}}

In this section, we investigate the shadowing property for smooth $\mathbb{Z}^k$-actions on and give the characterization of the subspace $V\subset\mathbb{R}^k$ along which $\alpha$ has the shadowing property.

We first recall some fundamental properties of  $C^r,r\ge 1,$ $\mathbb{Z}^k$-actions. Let $M$ be an $m$-dimensional closed Riemannian manifold.
Let $\alpha:\mathbb{Z}^k\longrightarrow C^r(M, M)$ be a $C^r$ $\mathbb{Z}^k$-action on $M$  whose generators, $f_i, 1\le i\le k$, are pairwise commuting $C^{r}$ diffeomorphisms.
A Borel probability measure $\mu$ on $M$ is called  \emph{$\alpha$-invariant} (resp. \emph{ergodic}) if
 it is invariant (resp. ergodic) with respect to each $f_i,1\le i\le k$.

Let $\mu$ be an $\alpha$-ergodic measure,  $\Gamma$ the Oseledec set of $\alpha$ and
$$
\{(\lambda_{i,j},m_j): 1\le i\le k, 1\le j\le s\}
$$
the pectrum of $\alpha$ (see \cite{Hu}, \cite{Kalinin} and \cite{Brown} for example).
The following basic notions come from Kalinin, Katok and Hertz's paper \cite{Kalinin1}. $T_{\Gamma}M=\bigoplus_{j=1}^{s} E_j$ is called the \emph{Lyapunov decomposition} for $\alpha$. By For each Lyapunov distribution $E_j, 1\le j\le s$, one can define a linear functional on $\mathbb{Z}^k$ and then extend it to a linear functional on $\mathbb{R}^k$ as follows,
$$
\chi_j: \mathbb{R}^k \longrightarrow \mathbb{R},\;\;\mathbf{v}=(v_1,\cdots,v_k)\longmapsto\sum_{i=1}^kv_i\lambda_{i,j}.
$$
 The hyperplanes ker$\chi_j\subset\mathbb{R}^k, 1\le j\le s$, are called
the \emph{Lyapunov hyperplanes} and the connected components of $\mathbb{R}^k\setminus\cup_{j=1}^s$ ker$\chi_j$ are
called the \emph{Weyl chambers} of $\alpha$. Clearly, each Weyl chamber is an open convex cone in $\mathbb{R}^k$. The elements (or say, vectors) in the union of the Lyapunov
hyperplanes are called \emph{singular}, and the elements in the union of the Weyl
chambers are called \emph{regular}. So, a singular  vector $\mathbf{v}=(v_1,\cdots,v_k)\in\mathbb{R}^k$ satisfies  the following condition
\begin{equation}\label{Singular}
 \sum_{i=1}^kv_i\lambda_{i,j}= 0\;\;\text{ for at least one }1\le j\le s;
\end{equation}
while, a regular vector $\mathbf{v}=(v_1,\cdots,v_k)\in\mathbb{R}^k$ satisfies  the following condition
\begin{equation}\label{Regular}
 \sum_{i=1}^kv_i\lambda_{i,j}\neq 0\;\;\text{ for any }1\le j\le s.
\end{equation}

%In \cite{Pilyugin1},  Pilyugin and Tikhomirov investigate the shadowing property for linear $\mathbb{Z}^k$-actions on $\mathbb{C}^m$.

For a nonzero vector $\mathbf{v}=(v_1,\cdots,v_k)\in\mathbb{R}^k$, let $L_\mathbf{v}$ be the line in which the vector $\mathbf{v}$ lies.
We call $L_\mathbf{v}$ (or $\mathbf{v}$)  is \emph{rational} if $L_\mathbf{v}\cap \mathbb{Z}^k\setminus \{\mathbf{0}\}\neq \emptyset$, i.e., the line $L_\mathbf{v}$ passes through some integer lattices except for $\mathbf{0}$. Otherwise, we say $L_\mathbf{v}$ (or $\mathbf{v}$) is \emph{irrational}.

Lemma 1 of \cite{Pilyugin1} gives us a tool to justify  when there exist regular vectors and hence the union of the Weyl chambers is not empty. Precisely, Lemma 1 of \cite{Pilyugin1} shows that if for each $1\le j\le s$, there exists $1\le i\le k$ such that $\lambda_{i,j}\neq 0$, then there exists an integer (and hence, rational) vector $\mathbf{v}=(v_1,\cdots,v_k)\in\mathbb{Z}^k\setminus \{\mathbf{0}\}$ satisfying (\ref{Regular}). In fact, since each Weyl chamber is an open convex cone in $\mathbb{R}^k$, once there is a regular vector, there is a neighbourhood of it in which all the vectors are regular.

\subsection{\texorpdfstring{Shadowing for $\alpha$ along subspace containing regular vector(Proof of Theorem A)}{Shadowing for α along subspace containing regular vector(Proof of Theorem A)}}

The strategy to prove Theorem A is as follows: firstly choose a ``thin tube" around the 1-dimensional subspace $V$ of $\mathbb{R}^k$ which contains $\mathbf{v}$, establish a Lipschitz shadowing result for a carefully selected subsystem in this tube, then deduce a shadowing property for the restriction of $\alpha$ to this tube, and hence by Proposition \ref{BP3} we get the shadowing property for any higher dimensional subspace of $\mathbb{R}^k$ containing $\mathbf{v}$ inductively.

We will show Theorem A in two cases: (1) $\mathbf{v}$ is rational; (2) $\mathbf{v}$ is irrational. We first consider the easier case.

\begin{proposition}\label{case1}
Let $\alpha$ be a $C^r$ $\mathbb{Z}^k$-action on $M$ as in Theorem A. If $\mathbf{v}=(v_1,\cdots,v_k)\in\mathbb{R}^k$ is a rational regular vector, then $L_\mathbf{v}\in\mathbb{A}_1(\alpha, \Gamma)$.
\end{proposition}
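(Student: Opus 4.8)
The plan is to reduce the statement to the shadowing and expansiveness of a single hyperbolic diffeomorphism and then to spread these properties over a thin tube around $L_\mathbf{v}$. Since $\mathbf{v}$ is rational, the line $L_\mathbf{v}$ contains a nonzero integer vector; let $\mathbf{w}\in L_\mathbf{v}\cap\mathbb{Z}^k$ be a primitive one and put $g=\alpha^\mathbf{w}$. Because the $E_j$ are invariant under every generator $f_i$, the Lyapunov exponent of $g$ on $E_j$ is $\chi_j(\mathbf{w})=\sum_{i=1}^k w_i\lambda_{i,j}$, which is nonzero for every $j$ since $\mathbf{w}$ is a nonzero multiple of the regular vector $\mathbf{v}$. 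Grouping the indices according to the sign of $\chi_j(\mathbf{w})$ yields a $Dg$-invariant splitting $T_\Gamma M=E^s\oplus E^u$ with $E^s=\bigoplus_{\chi_j(\mathbf{w})<0}E_j$ and $E^u=\bigoplus_{\chi_j(\mathbf{w})>0}E_j$.

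First I would upgrade the basic assumption, which is phrased for the generators $f_i$, to a uniform estimate for $g$. Writing $g^n=\alpha^{n\mathbf{w}}=f_1^{nw_1}\circ\cdots\circ f_k^{nw_k}$ and tracking a unit vector of $E_j$ through this composition (each factor preserving $E_j$), the uniform bounds (\ref{Uniformity}) for the individual factors telescope, so that for each $a>0$ there is $N$ with $\chi_j(\mathbf{w})-a\sum_i|w_i|\le \frac1n\log\|Dg^n(x)v\|\le\chi_j(\mathbf{w})+a\sum_i|w_i|$ for all $|n|\ge N$, all $x\in\Gamma$, and all unit $v\in E_j(x)$. Choosing $a$ small enough relative to $\min_j|\chi_j(\mathbf{w})|$ gives uniform hyperbolicity of $g$ on $\Gamma$: there are $C\ge1$ and $0<\lambda<1$ with $\|Dg^n|_{E^s}\|\le C\lambda^{\,n}$ and $\|Dg^{-n}|_{E^u}\|\le C\lambda^{\,n}$ for all $n\ge0$ on $\Gamma$.

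Next I would prove that a diffeomorphism that is uniformly hyperbolic on the invariant set $\Gamma$ in this sense is expansive and has the Lipschitz shadowing property on $\Gamma$. Expansiveness is the usual consequence: two points whose $g$-orbits stay within a small constant force the difference of their orbits, read in the splitting, to be simultaneously forward- and backward-bounded, hence zero. For shadowing I would pass to an adapted (Lyapunov) metric in which the splitting is orthogonal and the rates hold with constant $1$, the uniformity in the basic assumption being exactly what makes this metric uniformly equivalent to the Riemannian one on $\Gamma$; given a $\delta$-pseudo orbit of $g$, the shadowing point is then obtained as the fixed point of a graph-transform-type contraction on the Banach space of uniformly bounded correction sequences split along $E^s\oplus E^u$, which is a uniform contraction by hyperbolicity and yields $d(g^n(x),x_n)\le L\delta$.

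Finally I would transfer these two properties to $L_\mathbf{v}$ for a suitable thickness $t$. The geometric point is that every $\mathbf{n}\in (L_\mathbf{v})^t\cap\mathbb{Z}^k$ lies within a bounded distance $R=R(\mathbf{w},t)$ of some $m\mathbf{w}$, so $\mathbf{n}=m\mathbf{w}+\mathbf{r}_\mathbf{n}$ with $\mathbf{r}_\mathbf{n}$ ranging over the finite set of integer vectors of norm $\le R$. Given a $\delta$-pseudo orbit $\{x_\mathbf{n}\}$ of $(L_\mathbf{v})^t$, chaining the axiswise pseudo-orbit inequalities along a path inside the tube from $m\mathbf{w}$ to $(m+1)\mathbf{w}$ shows that $\{x_{m\mathbf{w}}\}_{m\in\mathbb{Z}}$ is a $C'\delta$-pseudo orbit for $g$; its shadowing point $x$ then satisfies $d(\alpha^\mathbf{n}(x),x_\mathbf{n})\le d(\alpha^{\mathbf{r}_\mathbf{n}}(g^m(x)),\alpha^{\mathbf{r}_\mathbf{n}}(x_{m\mathbf{w}}))+d(\alpha^{\mathbf{r}_\mathbf{n}}(x_{m\mathbf{w}}),x_\mathbf{n})$, which is small by equicontinuity of the finitely many maps $\alpha^{\mathbf{r}}$ and a further chaining of pseudo-orbit inequalities, giving $L_\mathbf{v}\in\mathbb{S}_1(\alpha,\Gamma)$. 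Since the orbit $\{m\mathbf{w}:m\in\mathbb{Z}\}$ sits inside every thickened tube, expansiveness of $g$ immediately forces expansiveness of $(L_\mathbf{v})^t$, so $L_\mathbf{v}\in\mathbb{E}_1(\alpha,\Gamma)$ and hence $L_\mathbf{v}\in\mathbb{A}_1(\alpha,\Gamma)$. I expect the main obstacle to be the third paragraph: running the Lipschitz shadowing argument on the \emph{non-compact} Oseledec set $\Gamma$, where the only substitute for compactness is the uniformity built into the basic assumption, and in particular verifying that the adapted metric (equivalently, the angle between $E^s$ and $E^u$) remains uniformly controlled on $\Gamma$.
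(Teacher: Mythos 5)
Your proposal is correct and takes essentially the same route as the paper: both reduce the statement to uniform hyperbolicity of a single element $\alpha^{\mathbf{n}}$ with $\mathbf{n}\in L_\mathbf{v}\cap\mathbb{Z}^k$ (splitting $T_\Gamma M$ by the sign of $\sum_i v_i\lambda_{i,j}$), invoke the classical shadowing-plus-expansiveness lemma for hyperbolic sets, and then transfer both properties to a thin tube around $L_\mathbf{v}$. The only differences are implementational: the paper sidesteps your adapted-metric step by choosing a far multiple $\mathbf{n}^*$ with every $|n_i^*|>N$, so that hyperbolicity holds with constant $C=1$ in the given metric, then pulls the Lipschitz shadowing constants back to the primitive vector $\mathbf{n}^\diamond$; and it chooses the thickness $t<t_0$ so small that $L_\mathbf{v}^t\cap\mathbb{Z}^k=\{l\mathbf{n}^\diamond: l\in\mathbb{Z}\}$, which makes your bounded-remainder decomposition and chaining argument unnecessary (your version of that step is, in effect, what the paper does in the irrational case).
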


Let us describe the idea to prove Proposition \ref{case1}.  For a rational regular vector $\mathbf{v}$, we will take a suitable non-zero $\mathbf{n}=(n_1,\cdots,n_k)\in\mathbb{Z}^k\cap L_\mathbf{v}$ such that the diffeomorphism $\alpha^{\mathbf{n}}$ is hyperbolic on $\Gamma$. By the classical properties of hyperbolic set of a diffeomorphism, we have that $\alpha^{\mathbf{n}}$ has the Lipschitz shadowing property and is expansive on $\Gamma$, and then by choosing an appropriate thickness of $L_\mathbf{v}$ we deduce $L_\mathbf{v}\in\mathbb{A}_1(\alpha, \Gamma)$. So we first state some basic concepts and properties of hyperbolic set of a diffeomorphism.

\begin{definition}
Let $g:M\longrightarrow M$ be a diffeomorphism and $\Lambda\subset M$ (where $\Lambda$ is not necessarily compact) be $g$-invariant.  We call that $\Lambda$ is a \emph{hyperbolic set} of $g$ if there exist an invariant splitting $T_{\Lambda}M=E^{s}\bigoplus E^{u}$ and a constant $0<\lambda<1$ such that for $m\ge 0$,
$$
\|Dg^m(x)v\|\leq \lambda^m\|v\|\;\; \mbox{for}\;\; v\in E^s(x)
$$
and
$$
\|Dg^m(x)v\|\geq \lambda^{-m}\|v\| \;\; \mbox{for}\;\; v\in E^u(x).
$$
We call the number $\lambda$ a \emph{hyperbolicity constant} of $g$
on $\Lambda$.
\end{definition}
 By Theorem 4.3 of the monograph of Wen \cite{Wen}, for the hyperbolic set $\Lambda$ of $g$, the corresponding hyperbolic decomposition $T_{\Lambda}M=E^s\bigoplus E^u$ is continuous, and moreover, it can be continuously extended to the closure  of $\Lambda$ such that $g_{\overline{\Lambda}}$, which is the restriction of $g$ on $\overline{\Lambda}$, is also hyperbolic. By this observation, we have that there exists a positive number $r$
 \begin{equation}\label{angle}
 \sup_{x\in \Lambda} \angle(E^s(x),E^u(x))\ge r.
 \end{equation}

Let $\varepsilon,\delta>0$. A sequence of points $\xi=\{x_{p}\}_{p\in\mathbb{Z}}$ is said to be a
$\delta$\emph{-pseudo orbit} for $g$ on $\Lambda$ if $\xi\subset \Lambda$ and $\sup_{p\in\mathbb{Z}}d(g(x_p),x_{p+1})\leq \delta.$
We say that $x\in M$ $\varepsilon$-\emph{shadows}  $\xi$ if $\sup_{p\in\mathbb{Z}}d(g^p(x),x_p)\leq \varepsilon.$

 We say that $g$ \emph{has the Lipschitz shadowing property on $\Lambda$} provided there exist $\delta_0, L>0$ such that any  $\delta$-pseudo orbit for $g$ in $\Lambda$ with $\delta\le \delta_0$ can be $L\delta$-shadowed by some point $x\in M$.
 We say that $g$ is \emph{expansive} on $\Lambda$ provided there is an \emph{expansive constant} $\rho_{\Lambda}>0$ such
that for any $x,y\in \Lambda$, $\sup_{p\in\mathbb{Z}}d(g^p(x), g^p(y))\le \rho_{\Lambda}$ implies that $x=y$.
We say that $g$ \emph{is topologically Anosov on $\Lambda$} provided $g$ is expansive and has the shadowing property on $\Lambda$.

The following properties about  hyperbolic set of a diffeomorphism are classical, we can see them in many books on differentiable dynamical systems (see, \cite{Katok}, \cite{Pilyugin} and \cite{Wen}, for example).
\begin{lemma}\label{Wen}
Let $g:M\longrightarrow M$ be a diffeomorphism and $\Lambda\subset M$ a hyperbolic set of $g$. Then  $g$ has the Lipschitz shadowing property and is expansive,  and hence is topologically Anosov on $\Lambda$.
\end{lemma}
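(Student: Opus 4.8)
The plan is to reduce to a compact hyperbolic set and then carry out the classical contraction-mapping proof of the shadowing lemma. First I would replace $\Lambda$ by its closure: since $M$ is compact, $\overline{\Lambda}$ is a compact invariant set, and by the cited Theorem 4.3 of \cite{Wen} the splitting $E^s\oplus E^u$ extends continuously over $\overline{\Lambda}$ with $g|_{\overline{\Lambda}}$ hyperbolic for the same constant $\lambda$, while (\ref{angle}) gives a uniform lower bound on the angle between $E^s$ and $E^u$. A $\delta$-pseudo orbit in $\Lambda$ is in particular a $\delta$-pseudo orbit in $\overline{\Lambda}$, and expansiveness on $\overline{\Lambda}$ restricts to $\Lambda$; hence it suffices to establish both properties on $\overline{\Lambda}$, where the hyperbolic data are continuous and all the relevant constants (rates, angle bounds, injectivity radius) are uniform.

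\emph{Expansiveness.} Given $x,y\in\overline{\Lambda}$ whose full orbits stay $\rho$-close for $\rho$ below the injectivity radius, I would write $g^p(y)=\exp_{g^p(x)}(w_p)$ with $w_p\in T_{g^p(x)}M$ uniformly small. The conjugacy relation $g(g^p(y))=g^{p+1}(y)$ linearizes to $w_{p+1}=Dg(g^p(x))\,w_p+R_p(w_p)$, where $R_p$ is higher-order and uniformly Lipschitz-small on small balls. Splitting $w_p$ along $E^s\oplus E^u$ and using the uniform contraction forward and expansion backward, the only uniformly bounded solution is $w_p\equiv 0$; thus $x=y$, which furnishes an expansive constant $\rho_{\overline{\Lambda}}$.

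\emph{Lipschitz shadowing.} This is the core of the argument. Given a $\delta$-pseudo orbit $\{x_p\}_{p\in\ZZ}\subset\overline{\Lambda}$, I would seek a genuine orbit of the form $g^p(z)=\exp_{x_p}(u_p)$ with $u_p\in T_{x_p}M$. Expanding $g(\exp_{x_p}(u_p))=\exp_{x_{p+1}}(u_{p+1})$ and using $d(g(x_p),x_{p+1})\le\delta$, the orbit condition becomes
\[
u_{p+1}=A_p u_p+\Delta_p+N_p(u_p),
\]
where $A_p=Dg(x_p)$, the term $\Delta_p$ with $\|\Delta_p\|=O(\delta)$ records the pseudo-orbit error, and $N_p$ is higher-order with arbitrarily small Lipschitz constant on a small ball. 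The key linear fact is that the difference operator $(\mathcal{L}u)_p=u_{p+1}-A_p u_p$ is invertible on the space of bounded sequences with $\|\mathcal{L}^{-1}\|\le C$ \emph{independent of $\delta$}: this is precisely the hyperbolicity of the cocycle $\{A_p\}$, where the continuous splitting over $\overline{\Lambda}$ and the uniform angle bound (\ref{angle}) make the projections onto the stable and unstable summands uniformly bounded, so $\mathcal{L}^{-1}$ is assembled from the convergent forward (stable) and backward (unstable) geometric sums. Recasting the orbit equation as the fixed-point problem $u=\mathcal{L}^{-1}\big(\Delta+N(u)\big)$ on the ball $\{\|u\|_\infty\le K\delta\}$ and invoking the contraction mapping theorem — using $\|\Delta\|_\infty=O(\delta)$ together with the small Lipschitz constant of $N$ — yields a unique bounded solution with $\|u\|_\infty\le L\delta$. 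Then $z=\exp_{x_0}(u_0)$ $L\delta$-shadows the pseudo orbit, giving the Lipschitz shadowing property.

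Combining expansiveness and Lipschitz (hence ordinary) shadowing on $\overline{\Lambda}$, and restricting back to $\Lambda$, yields that $g$ is topologically Anosov on $\Lambda$. I expect the genuinely delicate step to be the uniform invertibility of $\mathcal{L}$ with a bound independent of $\delta$, coupled with the uniform control of the nonlinear remainder $N$ on small balls; once the reduction to the compact set $\overline{\Lambda}$ secures uniform hyperbolic constants, the remaining estimates are routine.
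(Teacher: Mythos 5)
Your proposal is correct and follows exactly the classical route the paper has in mind: the paper states Lemma \ref{Wen} without proof, citing \cite{Katok}, \cite{Pilyugin} and \cite{Wen}, and its proof of the nonautonomous generalization (Lemma \ref{mainlemma}) is precisely your scheme --- pass to the compact closure via Theorem 4.3 of \cite{Wen} and the angle bound (\ref{angle}), linearize along the pseudo orbit in the Banach space of bounded tangent-vector sequences, and solve the fixed-point equation for $\Psi=A+\Phi$ with $\|\mathcal{L}^{-1}\|$ bounded uniformly in $\delta$ and $\mathrm{Lip}\,\Phi\to 0$. The only (inessential) divergence is in expansiveness, where the paper's analogue argues via the transversal intersection $W^s_r(x)\cap W^u_r(x)=\{x\}$ of local invariant manifolds while you use uniqueness of bounded solutions of the linearized equation; the two are interchangeable, and your version has the small merit of reusing the same operator $\mathcal{L}$ already inverted for shadowing, provided you note (as your projection remark implicitly does) that $A_p=Dg(x_p)$ preserves the splitting only up to an error controlled by the uniform continuity of $E^s\oplus E^u$ on $\overline{\Lambda}$, which is absorbed for small $\delta$.
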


\begin{proof}[Proof of Proposition \ref{case1}]
Let $\mathbf{v}=(v_1,\cdots,v_k)$ be a rational nonzero vector which satisfies (\ref{Regular}). If there exists some components in $\mathbf{v}$ are zero, say $v_1, \cdots, v_l=0$ for some $1\le l<k$, we may first consider the $\mathbb{Z}^{k-l}$-action $\alpha'$ on $M$ generated by $\{f_{l+1},\cdots,f_k\}$ and show that $L_\mathbf{v'}\in\mathbb{A}_1(\alpha', \Gamma)$ for $\mathbf{v}'=(v_{l+1},\cdots,v_k)$, and hence get $L_\mathbf{v}\in\mathbb{A}_1(\alpha, \Gamma)$.
So, we assume that each component $v_i$ in $\mathbf{v}$ is not $0$ in the following.

By (\ref{Regular}), we can write the set $\{1,\cdots,s\}$ into a disjoint union of the following two subsets
\begin{equation}\label{J1J2}
J_{1}=\{j:\sum_{i=1}^kv_i\lambda_{i,j}<0\}\;\;\text{ and } J_{2}=\{j:\sum_{i=1}^kv_i\lambda_{i,j}>0\}.
\end{equation}
Take $a>0$ small enough such that
\begin{equation}\label{bv12}
b_{\mathbf{v},1}:=\max_{j\in J_1}\sum_{i=1}^kv_i(\lambda_{i,j}+ a)<0\text{ and }
b_{\mathbf{v},2}:=\min_{j\in J_2}\sum_{i=1}^kv_i(\lambda_{i,j}-a)>0.
\end{equation}
Observe that, $b_{l\mathbf{v},1}$ decreases while $b_{l\mathbf{v},2}$ increases,  as $l>0$ increases.
Let
\begin{equation}\label{bv}
b_{\mathbf{v}}=\min\{-b_{\mathbf{v},1}, b_{\mathbf{v},2}\}.
\end{equation}
By the basic assumption, the convergence in (\ref{ULY}) is uniform,
hence for the above $a>0$ there exist $N>0$ such that when $|n|\ge N$, the inequalities in (\ref{Uniformity}) hold.
Since we have assumed that each component $v_i$ in $\mathbf{v}$ is not $0$, we can take a nonzero vector $\mathbf{n}^*=(n^*_1,\cdots,n^*_k)\in L_\mathbf{v}\cap\mathbb{Z}^k$ such that $|n^*_i|>N$ for each $i=1,\cdots,k$.

We \emph{claim} that for the above $\mathbf{n}^*$, the diffeomorphism $g:=\alpha^{\mathbf{n}^*}$ on $M$ is hyperbolic on $\Gamma$. For each $x\in \Gamma$, denote
\begin{equation}\label{EsEu}
E^s(x)=\bigoplus_{j\in J_{1}}E_j(x)\;\text{ and }\;E^u(x)=\bigoplus_{j\in J_{2}}E_j(x).
\end{equation}
From the choice of $\mathbf{n}^*$ we conclude that for each $0\neq v\in E^s(x)$ and $m>0$,
$$
\|Dg^m(x)v\|=\|D(\alpha^{\mathbf{n}^*})^m(x)v\|=\|D(f_1^{n^*_1}\circ\cdots\circ Df_k^{n^*_k})^m(x)v\|\le (e^{-b_{\mathbf{n}^*}})^m\|v\|;
$$
and for each $0\neq u\in E^u(x)$ and $m>0$,
$$
\|Dg^m(x)u\|=\|D(\alpha^{\mathbf{n}^*})^m(x)u\|=\|D(f_1^{n^*_1}\circ\cdots\circ Df_k^{n^*_k})^m(x)v\|\ge (e^{b_{\mathbf{n}^*}})^m\|u\|.
$$
Note that $b_{\mathbf{n}^*}>0$, hence $e^{-b_{\mathbf{n}^*}}<1$ and $e^{b_{\mathbf{n}^*}}>1$.  Thus the claim holds.

By Lemma \ref{Wen}, we have that the above $\alpha^{\mathbf{n}^*}$ has the Lipschitz shadowing property (with the corresponding constants $\delta^*_0, L^*>0$) and is expansive (with the expansiveness constant $\rho^*>0$) on $\Gamma$. Let $\mathbf{n}^\diamond=(n^\diamond_1,\cdots,n^\diamond_k)$ be one of the two elements in  $L_\mathbf{v}\cap \mathbb{Z}^k\setminus \{\mathbf{0}\}$ with the smallest norm. Let $l^*=\frac{|\mathbf{n}^\diamond|}{|\mathbf{n}^*|}$, then $l^*$ is an integer and $\mathbf{n}^*=l^*\mathbf{n}^\diamond$.
%Since the generators $f_i, 1\le j\le k$, of $\alpha$ are equi-continuous, we get that the diffeomorphism $\alpha^{\mathbf{n}^\diamond}$ on $M$ also has the shadowing property and is expansive too on $\Gamma$.
Define
$$
L_1=\max_{x\in M}\|D\alpha^{\mathbf{n}^\diamond}(x)\|, L_2=\max\{1,\frac{L^{l^*}_1-1}{L_1-1}\} \text{ and }\delta_0=\frac{\delta_0^*}{L_2}.
$$
Let $\xi=\{x_{p}\}_{p\in\mathbb{Z}}$ be a $\delta$-pseudo orbit for $\alpha^{\mathbf{n}^\diamond}$ in $\Gamma$ with $\delta\le \delta_0$, then its subsequence $\xi'=\{x_{l^*p}\}_{p\in\mathbb{Z}}$ must be a $L_2\delta(<\delta_0^*)$-pseudo orbit for $\alpha^{\mathbf{n}^*}$, and hence can be $L^*L_2\delta$-shadowed by some point $x\in M$ for $\alpha^{\mathbf{n}^*}$. Therefore, $\xi$  can be $(1+L^*L_1^{l^*})L_2\delta$-shadowed by $x$ with respect to $\alpha^{\mathbf{n}^\diamond}$. Denote $L=(1+L^*L_1^{l^*})L_2$. Thus $\alpha^{\mathbf{n}^\diamond}$ has the Lipschitz shadowing property with the corresponding constants $\delta_0, L>0$. Furthermore, it is not difficult to see that $\alpha^{\mathbf{n}^\diamond}$ is expansive on $\Gamma$ with the expansiveness constant $\rho=\frac{\rho^*}{\max\{1,L_1^{l^*}\}}$.

Let
$$
t_0=\min\{\pi_{L_\mathbf{v}}(\mathbf{n})\;:\;\mathbf{n}=(n_1,\cdots,n_k)\in \mathbb{Z}^k \text{ with } 0<|n_i|\le|n^\diamond_i|, 1\le i\le k\}.
$$
Clearly, for any $0<t<t_0$, we have
$$
L^t_\mathbf{v}\cap \mathbb{Z}^k=\{l\mathbf{n}^\diamond:l\in \mathbf{Z}\}.
$$
Therefore, we have that $L_\mathbf{v}\in\mathbb{A}_1(\alpha, \Gamma)$ and hence completes the proof of this proposition.
\end{proof}

In the following we will consider the harder case.

 \begin{proposition}\label{case2}
Let $\alpha$ be a $C^r$ $\mathbb{Z}^k$-action on $M$ as in Theorem A. If $\mathbf{v}=(v_1,\cdots,v_k)\in\mathbb{R}^k$ is an irrational regular vector, then $L_\mathbf{v}\in\mathbb{A}_1(\alpha, \Gamma)$.
\end{proposition}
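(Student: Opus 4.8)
The plan is to reduce the irrational case to the hyperbolic estimates already used in Proposition \ref{case1}, by working inside a thin tube $L_\mathbf{v}^t$ and extracting from it a \emph{carefully selected subsystem} whose one--step maps are all uniformly hyperbolic with the \emph{same} invariant splitting. I keep the stable/unstable bundles $E^s(x)=\bigoplus_{j\in J_1}E_j(x)$ and $E^u(x)=\bigoplus_{j\in J_2}E_j(x)$ attached to $\mathbf{v}$ as in (\ref{J1J2})--(\ref{EsEu}); because the Lyapunov decomposition is a common refinement for the whole action, these bundles are $D\alpha^\mathbf{n}$--invariant for \emph{every} $\mathbf{n}\in\mathbb{Z}^k$. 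Fix $a>0$ and $b_\mathbf{v}>0$ as in (\ref{bv12})--(\ref{bv}). The first ingredient is a hyperbolicity estimate for near--parallel integer vectors: writing $\chi_j(\mathbf{n})=\sum_i n_i\lambda_{i,j}$, one has $\chi_j(\mathbf{n})=\frac{\langle\mathbf{n},\mathbf{v}\rangle}{|\mathbf{v}|^2}\chi_j(\mathbf{v})+\chi_j(\pi_{L_\mathbf{v}^{\perp}}(\mathbf{n}))$, so that for $\mathbf{n}\in L_\mathbf{v}^t\cap\mathbb{Z}^k$ whose parallel projection has magnitude at least some threshold $R$ (with $t$ small), $\chi_j(\mathbf{n})$ keeps the sign of $\chi_j(\mathbf{v})$ and $|\chi_j(\mathbf{n})|\ge cR$. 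Combined with the basic assumption (the few individual exponents below the level $N$ contributing only a bounded distortion), this makes $\alpha^\mathbf{n}$ contract $E^s$ and expand $E^u$ at rates $\ge e^{cR}$, uniformly on $\Gamma$.

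The combinatorial heart is to organise the tube's integer points. I would first show that $\mathbb{Z}^k\cap L_\mathbf{v}^t$ projects \emph{syndetically} onto $L_\mathbf{v}$: there is $G>0$ such that every segment of $L_\mathbf{v}$ of length $G$ contains the projection of some lattice point of the tube. For $k=2$ this is the boundedness of return times of an irrational rotation to a fixed interval (three--gap theorem); in general one restricts to the smallest rational subspace $H\supset L_\mathbf{v}$ (lattice points off $H$ are bounded away from $L_\mathbf{v}$, hence escape a thin enough tube) and applies the corresponding equidistribution statement for the totally irrational direction inside $H\cap\mathbb{Z}^k$. Using syndeticity I then select a bi--infinite sequence $\{\mathbf{n}_p\}_{p\in\mathbb{Z}}\subset L_\mathbf{v}^t\cap\mathbb{Z}^k$ with $\mathbf{n}_0=\mathbf{0}$ whose consecutive jumps $\mathbf{w}_p=\mathbf{n}_{p+1}-\mathbf{n}_p$ have parallel projection in $[R,R+G]$ (hence are uniformly hyperbolic by the previous step) and uniformly bounded norm, and such that every point of $L_\mathbf{v}^t\cap\mathbb{Z}^k$ lies within a bounded distance of some $\mathbf{n}_p$.

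The sequence $(g_p)_{p\in\mathbb{Z}}$, $g_p=\alpha^{\mathbf{w}_p}$, is then a sequence of diffeomorphisms sharing the single continuous invariant hyperbolic splitting $E^s\oplus E^u$ (extended to $\overline{\Gamma}$ as in Proposition \ref{case1} via \cite{Wen}) with uniform constants, so the non--autonomous analogue of Lemma \ref{Wen}, proved exactly as in the autonomous case, yields Lipschitz shadowing for $(g_p)$. Given an arbitrary $\delta$--pseudo orbit $\xi=\{x_\mathbf{n}:\mathbf{n}\in L_\mathbf{v}^t\cap\mathbb{Z}^k\}$, I would chain the nearest--neighbour pseudo orbit inequalities along a bounded--length lattice path from $\mathbf{n}_p$ to $\mathbf{n}_{p+1}$ and use equicontinuity of the generators to see that $\{x_{\mathbf{n}_p}\}$ is a $C\delta$--pseudo orbit for $(g_p)$; its shadowing point $x\in M$ then satisfies $d(\alpha^{\mathbf{n}_p}(x),x_{\mathbf{n}_p})\le LC\delta$ for all $p$, and since every $\mathbf{n}$ is boundedly close to some $\mathbf{n}_p$, a further use of equicontinuity and the pseudo orbit condition gives $d(\alpha^{\mathbf{n}}(x),x_\mathbf{n})\le\varepsilon$ for all $\mathbf{n}\in L_\mathbf{v}^t\cap\mathbb{Z}^k$. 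Hence $L_\mathbf{v}$ has the shadowing property with thickness $t$. Expansiveness along $L_\mathbf{v}$ comes from the same sequence: if $d_\alpha^{L_\mathbf{v}^t}(x,y)$ is below the expansive constant then $d(\alpha^{\mathbf{n}_p}(x),\alpha^{\mathbf{n}_p}(y))$ is small for all $p$, and expansiveness of $(g_p)$ forces $x=y$. Thus $L_\mathbf{v}\in\mathbb{A}_1(\alpha,\Gamma)$.

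I expect the main obstacle to be the irregular spacing of lattice points in an irrational tube. Unlike the rational case there is no single hyperbolic return map, and no thin tube is invariant under any nonzero translation along $L_\mathbf{v}$, so the jump vectors $\mathbf{w}_p$ genuinely vary; one is therefore forced both to prove syndeticity of the projected lattice and to run a non--autonomous shadowing argument rather than invoke Lemma \ref{Wen} directly. Verifying that all the maps $g_p$ are hyperbolic with one common splitting and one common set of constants — uniformly in $p$ — is precisely where the uniform Lyapunov convergence of the basic assumption is indispensable.
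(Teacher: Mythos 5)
Your proposal follows the same skeleton as the paper's proof: keep the splitting $E^s=\bigoplus_{j\in J_1}E_j$, $E^u=\bigoplus_{j\in J_2}E_j$ of (\ref{EsEu}); since no nonzero lattice point lies on $L_\mathbf{v}$, replace the single return map of Proposition \ref{case1} by a bi-infinite sequence of lattice points in a tube around $L_\mathbf{v}$ whose consecutive jumps $g_p=\alpha^{\mathbf{n}^{(p+1)}-\mathbf{n}^{(p)}}$ are uniformly hyperbolic with one common splitting and constant (this is exactly where the paper uses the uniform convergence of the basic assumption, via the conditions (\ref{N2N})--(\ref{condition2})); prove a nonautonomous topological-Anosov lemma for such a sequence (the paper's Lemma \ref{mainlemma}, established by a contraction argument on the Banach space of vector sequences, just as you anticipate); and finally transfer pseudo-orbits and expansiveness between the tube system and the selected subsequence using boundedness of the jumps and equicontinuity of the generators. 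In outline, and in the identification of the two essential difficulties (genuinely varying jump vectors, hence a nonautonomous shadowing argument; uniformity of the hyperbolicity constants), your plan coincides with the paper's.

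The one place you genuinely deviate is the tube thickness, and there your choice works against you. The paper fixes $t_0\in[\sqrt{k},2\sqrt{k})$; since every point of $\mathbb{R}^k$ is within $\sqrt{k}$ of a lattice point, the existence of a sequence $\{\mathbf{n}^{(p)}\}\subset L^{t_0}_\mathbf{v}\cap\mathbb{Z}^k$ with coordinate increments of sign $\mathrm{sgn}(v_i)$ and magnitude between $N$ and $2N$ is immediate, and none of your number-theoretic machinery (three-gap theorem, restriction to the minimal rational subspace $H$, equidistribution) is needed. Worse, your insistence on a \emph{thin} tube threatens the transfer step itself: the $\delta$-pseudo-orbit condition along $L^t_\mathbf{v}$ only constrains $x_\mathbf{n}$ against its nearest tube-lattice neighbours in the coordinate directions, and in a thin tube around a totally irrational line such neighbours typically fail to exist (already for $k=2$ and small $t$, the horizontal or vertical line through a tube lattice point usually meets no other tube lattice point), so the constraints degenerate, essentially arbitrary assignments qualify as $\delta$-pseudo-orbits, and no point can shadow them; correspondingly, your plan to ``chain the nearest-neighbour pseudo orbit inequalities along a bounded-length lattice path from $\mathbf{n}_p$ to $\mathbf{n}_{p+1}$'' breaks down because the intermediate lattice points of such a path leave the thin tube and carry no inequalities to chain. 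Since the definition of the shadowing property for $L_\mathbf{v}$ lets you pick $t$, the repair is simply to thicken to the scale $\sqrt{k}$ as the paper does; with that change your argument becomes the paper's. A smaller remark: your bounded-distortion caveat for jumps with some small coordinate disappears if you first reduce, as the paper does at the start of Propositions \ref{case1} and \ref{case2}, to the case where every $v_i\neq 0$, since then every coordinate of a long near-parallel jump exceeds $N$ in the correct sign and the basic assumption applies factor by factor.
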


To prove this proposition, we will use the strategy similar to that for Proposition \ref{case1}. However, there is no non-zero $\mathbf{n}=(n_1,\cdots,n_k)\in\mathbb{Z}^k\cap L_\mathbf{v}$ in this case. We will choose a sequence of diffeomorphisms of $M$ carefully in a thin tube around $L_\mathbf{v}$ such that the induced nonautonomous system is topologically Anosov on $\Gamma$,  and then  deduce $L_\mathbf{v}\in\mathbb{A}_1(\alpha, \Gamma)$.

Now, we introduce some basic notations and establish some basic properties for nonautonomous dynamical systems induced by a sequence of maps. Let $\{g_p\}_{p\in\mathbb{Z}}$ be a sequence of diffeomorphisms of $M$. For any $p\in\mathbb{Z}, m\ge 0$, define
\begin{equation}\label{nonauto}
g_p^m:=\left\{\begin{array}{ll}
g_{p+m-1}\circ \cdots \circ g_p \quad &\;\;m\ge 2\\
g_p\quad &\;\;m=1\\
id \quad &\;\;m=0.
\end{array}\right.
\end{equation}
Denote  by $\mathcal{G}$ the nonautonomous system generated by $\{g_p\}_{p\in\mathbb{Z}}$ (or say by $\{g_p^m:p\in\mathbb{Z}, m\ge 0\}$).
Let $\Lambda\subset M$ be invariant with respect to each $g_p, p\in\mathbb{Z}$. We call it is  $\mathcal{G}$-\emph{invariant}.

\begin{definition}
We call that $\Lambda\subset M$ is a \emph{hyperbolic set} of $\mathcal{G}$ if it is $\mathcal{G}$-invariant and there exist an invariant splitting $T_{\Lambda}M=E^{s}\bigoplus E^{u}$ and a constant $0<\lambda<1$ such that for $p\in\mathbb{Z}, m\ge 0$,
$$
\|Dg_p^m(x)v\|\leq \lambda^m\|v\|\;\; \mbox{for}\;\; v\in E^s(x)
$$
and
$$
\|Dg_p^m(x)v\|\geq \lambda^{-m}\|v\| \;\; \mbox{for}\;\; v\in E^u(x).
$$
We call the number $\lambda$ a \emph{hyperbolicity constant} of $\mathcal{G}$
on $\Lambda$.
\end{definition}

Let $\varepsilon,\delta>0$. A sequence of points $\xi=\{x_{p}\}_{p\in\mathbb{Z}}$ is said to be a
$\delta$\emph{-pseudo orbit} for $\mathcal{G}$ on $\Gamma$ if $\xi\subset \Lambda$ and
$$
\sup_{p\in\mathbb{Z}}d(g_p(x_p),x_{p+1})\leq \delta.
$$
We say that $x\in M$ $\varepsilon$-\emph{shadows}  such a pseudo orbit $\xi$ if
$$
\max\{\sup_{p\ge 0}d(g_0^p(x),x_p),\;\sup_{p< 0}d((g_p^{-p})^{-1}(x),x_p)\}\leq \varepsilon.
$$

 We say that $\mathcal{G}$ \emph{has the Lipschitz shadowing property on $\Lambda$} provided there exist $\delta_0, L>0$ such that any  $\delta$-pseudo orbit for $\mathcal{G}$ in $\Lambda$ $\delta\le \delta_0$ can be $L\delta$-shadowed by some point $x\in M$. For any $x,y\in \Lambda$, let
$$
d_{\mathcal{G}}(x,y):=\max\{\sup_{p\ge 0}d(g_0^{p}(x),g_0^{p}(y)),\;\sup_{p< 0}d((g_p^{-p})^{-1}(x),(g_p^{-p})^{-1}(y))\}
$$
  We say that $\mathcal{G}$ is \emph{expansive} on $\Lambda$ provided there is an \emph{expansive constant} $\rho_{\Lambda}>0$ such
that for any $x,y\in \Lambda$,
$$
d_{\mathcal{G}}(x,y)\leq \rho_{\Lambda} \Longrightarrow x=y.
$$
We say that $\mathcal{G}$ \emph{is topologically Anosov on $\Lambda$} provided $\mathcal{G}$ is expansive and has the shadowing property on $\Lambda$.

 \begin{lemma}\label{mainlemma}
Let $\{g_p\}_{p\in\mathbb{Z}}$ be a sequence of diffeomorphisms of $M$, $\mathcal{G}$ be the induced nonautonomous system and $\Lambda$ be $\mathcal{G}$-invariant. If $\Lambda\subset M$ is hyperbolic for $\mathcal{G}$, then $\mathcal{G}$ has the Lipschitz shadowing property and is expansive,  and hence is topologically Anosov on $\Lambda$.
\end{lemma}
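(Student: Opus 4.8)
The plan is to prove Lemma \ref{mainlemma} as the nonautonomous counterpart of Lemma \ref{Wen}, adapting the classical proof of Lipschitz shadowing and expansiveness on a hyperbolic set to the cocycle $\{Dg_p\}$ by the \emph{exponential dichotomy plus contraction} scheme (see \cite{Pilyugin}, \cite{Palmer}). First I would set up charts along a given $\delta$-pseudo orbit $\xi=\{x_p\}_{p\in\mathbb{Z}}\subset\Lambda$. Writing a candidate true orbit as $y_p=\exp_{x_p}(v_p)$, the condition $y_{p+1}=g_p(y_p)$ becomes $v_{p+1}=\tilde g_p(v_p)$, where $\tilde g_p:=\exp_{x_{p+1}}^{-1}\circ g_p\circ\exp_{x_p}$. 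Since $\tilde g_p(0)=\exp_{x_{p+1}}^{-1}(g_p(x_p))$ has norm $d(g_p(x_p),x_{p+1})\le\delta$, expanding $\tilde g_p(v)=c_p+A_p v+\phi_p(v)$ with $A_p=D\tilde g_p(0)$ reduces the shadowing problem to finding a bounded solution, in the Banach space of bounded sequences $v=(v_p)$ with $v_p\in T_{x_p}M$, of
$$\mathcal{L}v=c+\Phi(v),\qquad (\mathcal{L}v)_p:=v_{p+1}-A_p v_p,$$
where $\|c\|_\infty\le\delta$ and $\Phi$ collects the higher-order terms $\phi_p$.

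The core step is to show that $\mathcal{L}$ is invertible with $\|\mathcal{L}^{-1}\|\le K$ for a constant $K$ depending only on the hyperbolicity data $\lambda$ and the transversality of the splitting. Hyperbolicity of $\Lambda$ for $\mathcal{G}$ says precisely that the cocycle $\{Dg_p(x_p)\}$ admits the invariant splitting $E^s(x_p)\oplus E^u(x_p)$ with uniform rates $\lambda<1$; transporting the stable component forward and the unstable component backward through the cocycle and summing the resulting geometric series produces a discrete Green operator inverting $\mathcal{L}$ with the desired uniform bound. Here I would use continuity of the splitting on $\overline{\Lambda}$ (as in the paragraph establishing (\ref{angle}) for Lemma \ref{Wen}) to control $E^\sigma(g_p(x_p))$ by $E^\sigma(x_{p+1})$, so that $A_p$ is an $O(\delta)$-perturbation of a genuinely hyperbolic cocycle and the dichotomy persists for $\delta$ small.

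With $\mathcal{L}^{-1}$ in hand, Lipschitz shadowing follows by contraction: rewrite the equation as the fixed-point problem $v=\mathcal{L}^{-1}(c+\Phi(v))$. Since the $g_p$ are $C^1$ and $M$ is compact, the maps $\phi_p$ have a common modulus of continuity with $\phi_p(0)=0$, $D\phi_p(0)=0$, so $\Phi$ has arbitrarily small Lipschitz constant on a small ball; for $\delta$ below some $\delta_0$ the right-hand side maps the ball of radius $2K\delta$ into itself and contracts, yielding a unique fixed point $v^*$ with $\|v^*\|_\infty\le L\delta$, $L:=2K$. Then $x:=\exp_{x_0}(v_0^*)$ is a point that $L\delta$-shadows $\xi$. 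For expansiveness, if the $\mathcal{G}$-orbits of $x,y\in\Lambda$ satisfy $d_{\mathcal{G}}(x,y)\le\rho_\Lambda$ with $\rho_\Lambda$ smaller than the chart radius, the difference sequence solves the \emph{homogeneous} linearized equation $\mathcal{L}w=\Phi(w)$; since $\mathcal{L}$ is invertible, the only small bounded solution is $w\equiv0$, forcing $x=y$. Combining the two properties gives that $\mathcal{G}$ is topologically Anosov on $\Lambda$.

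I expect the invertibility of $\mathcal{L}$ with a \emph{pseudo-orbit--independent} bound to be the main obstacle. The difficulty is genuinely nonautonomous: the splitting is defined only along $\Lambda$, which need not be compact, so I must extract a uniform lower bound on $\angle(E^s,E^u)$ and uniform dichotomy constants from the single-rate hyperbolicity hypothesis, and must reconcile the fibres $E^\sigma(g_p(x_p))$, where the cocycle actually lands, with $E^\sigma(x_{p+1})$, where the next chart sits. Passing to the closure $\overline{\Lambda}$ and invoking continuity of the splitting, exactly as was done to obtain (\ref{angle}) in the autonomous case, is what I expect to make these uniformities available.
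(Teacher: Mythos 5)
Your proof is correct, and for the shadowing half it is essentially the paper's argument in different packaging: the paper works in the same Banach space $\mathfrak{X}$ of bounded tangent sequences along the pseudo orbit, splits the chart maps as $\Psi=A+\Phi$ with $A$ block-hyperbolic after projecting onto $E^s$ and $E^u$ (using the angle bound (\ref{angle}) on $\overline{\Lambda}$ and $D(\exp_x)(0)=\id$ --- exactly the uniformities you flag as the main obstacle), and then invokes a standard fixed-point lemma (Lemma 2.5 of \cite{Wen}) for a hyperbolic linear operator plus a perturbation with $\mathrm{Lip}\,\Phi\to 0$ as $\delta\to 0$; your explicit inversion of $\mathcal{L}=\mathrm{Id}-A$ by the discrete Green operator is precisely what that lemma encapsulates, so the two routes coincide in substance. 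Where you genuinely diverge is expansiveness: the paper proves it geometrically, running the graph transform to obtain local stable and unstable manifolds $W^s_r(x)$, $W^u_r(x)$ for $\mathcal{G}$ and concluding from $W^s_r(x)\cap W^u_r(x)=\{x\}$, whereas you reuse the operator machinery --- along the true orbit of $x$ the linear part is exactly hyperbolic (the splitting is invariant, so no fiber mismatch arises), hence the difference sequence solves the homogeneous equation $\mathcal{L}w=\Phi(w)$, and $\|w\|\le\|\mathcal{L}^{-1}\|\,\mathrm{Lip}(\Phi)\,\|w\|$ forces $w\equiv 0$. Your route is more economical, needing no invariant-manifold construction and yielding $\rho_\Lambda$ directly from $K$ and the modulus of $\Phi$, while the paper's route produces the local stable/unstable manifolds themselves, which carry extra structure. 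One shared caveat: both you and the paper need the Lipschitz constants of the remainders $\phi_p$ to be small \emph{uniformly in $p$}, which for an arbitrary sequence $\{g_p\}$ requires equicontinuity of the derivatives $Dg_p$ --- automatic in the paper's application, where by (\ref{N2N}) the maps $g_p=\alpha^{\mathbf{n}^{(p+1)}-\mathbf{n}^{(p)}}$ range over a finite set of diffeomorphisms; your phrase ``a common modulus of continuity'' should be read as this standing assumption, and your ``$O(\delta)$-perturbation'' of the cocycle is really an $o(1)$-perturbation obtained from uniform continuity of the splitting on the compact set $\overline{\Lambda}$, which is all that the persistence of the dichotomy requires.
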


\begin{proof}
Assume $\Lambda\subset M$ is a  hyperbolic set for $\mathcal{G}$ with the invariant splitting $T_{\Lambda}M=E^{s}\bigoplus E^{u}$ and a hyperbolicity constant $\lambda$. We will adapt the standard techniques in proving that a diffeomorphism is topologically Anosov on a hyperbolic set (Lemma \ref{Wen}) to the nonautonomous case. We only give the outline of the proof.

\emph{Part 1}. We show that $\mathcal{G}$ has the Lipschitz shadowing property on $\Lambda$.
We will show that for a $\delta$(sufficiently small)-pseudo orbit $\xi=\{x_p\}_{p\in \mathbb{Z}}$ of
$\mathcal{G}$ on $\Lambda$,  one can find a (unique) point $x\in M$ and some $L>0$ such that $x$ $L\delta$-shadowing $\xi$, i.e.,
\begin{equation}\label{shadow1}
\max\{\sup_{p\ge 0}d(g_0^p(x),x_p),\;\sup_{p< 0}d((g_p^{-p})^{-1}(x),x_p)\}\leq L\delta.
\end{equation}
Once this is done, we let
\begin{equation}\label{v_p}
v_p^*:=\left\{\begin{array}{ll}
\exp_{x_p}^{-1}(g_0^p(x)) \quad &\;\;p\ge 0\\
\exp_{x_p}^{-1}((g_p^{-p})^{-1}(x))\quad &\;\;p< 0,
\end{array}\right.
\end{equation}
where $\exp_{x}$ is the standard exponential mapping from
a small neighborhood of the zero vector in $T_{x}M$ to its image,
and hence the sequence of the vectors $v^*:=\{v_p^*:p\in \mathbb{Z}\}$ satisfying
\begin{equation}\label{mainequa}
v_{p+1}^*=\exp_{x_{p+1}}^{-1}\circ g_p\circ\exp_{x_p}(v_p^*),\;\;p\in \mathbb{Z}.
\end{equation}

For such pseudo orbit $\xi=\{x_p\}_{p\in \mathbb{Z}}$, define a Banach space
$$
\mathfrak{X}=\{v=\{v_p\}_{p\in \mathbb{Z}}:v_p\in T_{x_p}M, p\in \mathbb{Z}\},
$$
with the norm $\|v\|=\sup_{p\in \mathbb{Z}}\|v_p\|$.
Define an operator $\Psi$ on a small neighborhood $\mathfrak{B}$ of the zero section in $\mathfrak{X}$ by
\begin{equation}\label{betaThmA}
(\Psi(v))_{p+1}=\exp_{x_{p+1}}^{-1}\circ g_p\circ \exp_{x_p}(v_p),
\end{equation}
and a linear operator $A$ on $\mathfrak{B}$ by
\begin{equation}\label{AThmA}
(Av)_{p+1}=((A^s+A^u)v)_{p+1}=(A_p^s+A_p^u)v_p,
\end{equation}
where
$$
A_p^s=\Pi_{x_{p+1}}^s\circ
D(\exp_{x_{p+1}}^{-1}\circ g_p\circ \exp_{x_p})(0)\circ \Pi_{x_p}^s
$$
and
$$
A_p^u=\Pi_{x_{p+1}}^u\circ
D(\exp_{x_{p+1}}^{-1}\circ g_p\circ \exp_{x_p})(0)\circ \Pi_{x_p}^u,
$$
in which $\Pi^{s}_x:T_{x}M\to E^{s}_x$ is the projection onto
$E^{s}_x$ along $E^u_x$ and  $\Pi^{u}_x$ is
defined in a similar way. Let $\Phi=\Psi-A$.
By (\ref{betaThmA}) and (\ref{AThmA}),
(\ref{mainequa}) is equivalent to
\begin{equation}\label{mainequa1}
v^*=\Psi(v^*)=Av^*+\Phi(v^*).
\end{equation}

From (\ref{angle}) and the fact $D(\exp_{x})(0)=\id$, for any $\lambda'\in(\lambda, 1)$ we can take $\delta$ small enough such that
$$
\max_{p\in \mathbb{Z}}\max\{\|A_p^s|_{E^s_{x_p}}\|,\|(A_p^u|_{E^u_{x_{p+1}}})^{-1}\|\}<\lambda',
$$
i.e., $\max\{\|A^s|_{E^s}\|,\|(A^u|_{E^u})^{-1}\|\}<\lambda'$.
Since $\mathcal{G}$ is smooth,  $\Phi$ is a Lipschitz map and its Lipschitz constant Lip$\Phi$ tends to zero as $\delta$ tends to zero.
By these properties about $A$ and $\Phi$, we can apply a standard technical lemma (see Lemma 2.5 of \cite{Wen}, for example) for the operator $\Psi=A+\Phi$ to conclude that there exist $\delta_0, L>0$ such that for any
$\delta(<\delta_0)$-pseudo orbit $\{x_p\}_{p\in \mathbb{Z}}$, the operator
$\Psi$ is well-defined on the $L\delta$-neighborhood $\mathfrak{B}$ of the zero section in $\mathfrak{X}$ and is contracting, and therefore has a fixed point $v^*$ in $\mathfrak{B}$, clearly $\|v^*\|\le L\delta$. We get the desired property.

\emph{Part 2}. We prove that $\mathcal{G}$ is expansive on $\Lambda$. Since $\mathcal{G}$ is hyperbolic on $\Lambda$, using the standard graph  transform method we can establish the stable manifold theorem for $\mathcal{G}$ on $\Lambda$. Moreover, there exists $r>0$ and $\rho_\Lambda>0$ such that for $x, y\in \Lambda$ with $d(x,y)\le \rho_\Lambda$ then
the local stable manifold $W^s_r(x)$ (resp. the local unstable manifold $W^u_r(x)$) and local unstable manifold $W^u_r(y)$ (resp. the local stable manifold $W^s_r(y)$) of size $r$ intersects transversally, here for $x\in \Lambda$,
$$
W^s_r(x):=\{y\in M:\sup_{p\ge 0}d(g_0^{p}(x),g_0^{p}(y))\le r \}
$$
and
$$
W^u_r(x):=\{y\in M:\sup_{p\le 0}d((g_p^{-p})^{-1}(x),(g_p^{-p})^{-1}(y))\le r\}.
$$
 Therefore, for $x,y\in \Lambda$ satisfying $d_{\mathcal{G}}(x,y)\leq \rho_{\Lambda}$, we have that $y\in W^s_r(x)\cap W^u_r(x)$.
From the fact $W^s_r(x)\cap W^u_r(x)=\{x\}$ for sufficiently small $r$ we get $x=y$ immediately.

This completes the proof of the lemma.
\end{proof}

 \begin{proof}[Proof of Proposition \ref{case2}]
Let $\mathbf{v}=(v_1,\cdots,v_k)$ be an irrational nonzero vector which satisfies (\ref{Regular}). For the similar reason as at the beginning of the proof of Proposition \ref{case1},  we assume that each component $v_i$ in $\mathbf{v}$ is not $0$.

Now we take some quantities as in the proof of Proposition \ref{case1}. Precisely, by (\ref{Regular}), we write the set $\{1,\cdots,s\}$ into a disjoint union $J_{1}\cup J_{2}$, where $J_1$ and $J_2$ are as in (\ref{J1J2}).
Take $a>0$ small enough such that the inequalities in (\ref{bv12}) hold and let $b_{\mathbf{v}}=\min\{-b_{\mathbf{v},1}, b_{\mathbf{v},2}\}$ as that in (\ref{bv}). By the basic assumption, for the above $a>0$ take $N=N(a)>0$ such that when $|n|\ge N$, the inequalities in (\ref{Uniformity}) hold.

Note that the vector $\mathbf{v}=(v_1,\cdots,v_k)$ is irrational and satisfies the condition (\ref{Regular}). We can take a positive number $t_0$ in $[\sqrt{k}, 2\sqrt{k})$ as the thickness of $L_\mathbf{v}$, and then choose the above $a>0$ small enough and the corresponding $N=N(a, t_0)$ large enough such that there exists a sequence $\{\mathbf{n}^{(p)}=(n^{(p)}_1,\cdots,n^{(p)}_k)\}_{p\in\mathbb{Z}}\in L^{t_0}_\mathbf{v}\cap \mathbb{Z}^k$ with $\mathbf{n}^{(0)}=\textbf{0}$ and $\text{sgn}(n^{(p+1)}_i-n^{(p)}_i)=\text{sgn}(v_i), 1\le i\le k$, and  satisfying the following conditions
\begin{equation}\label{N2N}
N\le \sup_{p\in\mathbb{Z}}\max_{1\le i\le k}|n^{(p+1)}_i-n^{(p)}_i|\le 2N,
\end{equation}
\begin{equation}\label{condition1}
\lambda_1:=\sup_{p\in\mathbb{Z}} \max_{j\in J_1}\sum_{i=1}^k(n^{(p+1)}_i-n^{(p)}_i)(\lambda_{i,j}+a)<0
\end{equation}
and
\begin{equation}\label{condition2}
\lambda_2:=\inf_{p\in\mathbb{Z}} \min_{j\in J_2} \sum_{i=1}^k(n^{(p+1)}_i-n^{(p)}_i)(\lambda_{i,j}-a)>0.
\end{equation}
Let
$$
\lambda=\min\{e^{\lambda_1}, e^{-\lambda_2}\}.
$$
Then $0<\lambda<1$.

We \emph{claim} that the nonautonomous system $\mathcal{G}$ generated by the sequence $\{g_p=\alpha^{\mathbf{n}^{(p+1)}-\mathbf{n}^{(p)}}\}_{p\in\mathbb{Z}}$ of diffeomorphisms on $M$ is  hyperbolic on $\Gamma$.  For any $p\in\mathbb{Z}, m\ge 0$, define $g_p^m$ as in (\ref{nonauto}).
For each $x\in \Gamma$, denote $E^s(x)$ and $E^u(x)$ as in (\ref{EsEu}).
From the choice of $\mathcal{G}$ we conclude that for each $0\neq v\in E^s(x)$ and $m>0$,
\begin{eqnarray*}
\|Dg_p^m(x)v\|&=&\|D(g_{m+p-1}\circ \cdots \circ g_p)(x)v\|\\
&\le&\prod_{q=0}^{m-1}e^{b_{\mathbf{n}^{(p+q+1)}-\mathbf{n}^{(p+q)},1}}\|v\|\le(e^{\lambda_1})^m\|v\|\le\lambda^m\|v\|.
\end{eqnarray*}
Similarly, for each $0\neq u\in E^u(x)$ and $m>0$,
\begin{eqnarray*}
\|Dg_p^m(x)u\|&=&\|D(g_{m+p-1}\circ \cdots \circ g_p)(x)u\|\\
&\ge&\prod_{q=0}^{m-1}e^{b_{\mathbf{n}^{(p+q+1)}-\mathbf{n}^{(p+q)},2}}\|u\|\ge(e^{-\lambda_2})^m\|u\|\ge\lambda^{-m}\|u\|.
\end{eqnarray*}
Thus the claim holds.

By the above claim and Lemma \ref{mainlemma},  $\mathcal{G}$ has the Lipschitz shadowing property (with the corresponding constants $\delta^*_0, L^*>0$) and is expansive (with the expansiveness constant $\rho^*>0$) on $\Gamma$. By (\ref{N2N}), there is an upper bound, say $2N\sqrt{k}$, of the set $\{|\mathbf{n}^{(p+1)}-\mathbf{n}^{(p)}| : p\in\mathbb{Z}\}$, and note that the thickness $t_0\in[\sqrt{k}, 2\sqrt{k})$ of $L_\mathbf{v}$ is finite (in fact $t_0$ is quite small with respect to $N$). From this observation, we can use a strategy similar to that in the proof of Proposition \ref{case1} to show that there exist $\delta_0, L, L_2>0$ satisfying the following properties: let $\xi=\{x_{\mathbf{n}}: \mathbf{n}\in L^{t_0}_\mathbf{v}\cap \mathbb{Z}^k\}$ be a $\delta$-pseudo orbit of $L^{t_0}_\mathbf{v}$ for $\alpha$ in $\Gamma$ with $\delta\le \delta_0$, then its subsequence $\xi'=\{x_{\mathbf{n}^{(p)}}: p\in\mathbb{Z}\}$ must be a $L_2\delta(<\delta_0^*)$-pseudo orbit for $\mathcal{G}$, and hence can be $L^*L_2\delta$-shadowed by some point $x\in M$ for $\mathcal{G}$, and therefore, $\xi$  can be $L\delta$-shadowed by $x$ with respect to $L^{t_0}_\mathbf{v}$ for $\alpha$.  Thus $L_\mathbf{v}$ has the Lipschitz shadowing property in $\Gamma$ with the corresponding constants $\delta_0, L>0$. Furthermore, take $\rho>0$ small enough such that for any $x,y\in M$,
$$
d_{\alpha}^{L^{t_0}_\mathbf{v}}(x, y)\le \rho\Longrightarrow d_{\mathcal{G}}(x,y)\le \rho^*,
$$
and hence $x=y$ since $\rho^*$ is an expansiveness constant of $\mathcal{G}$. This means that
 $L_\mathbf{v}$ is expansive on $\Gamma$ and $\rho$ is an expansiveness constant.

This completes the proof of this proposition.
\end{proof}

\begin{proof}[Proof of Theorem A]
Let $\alpha:\mathbb{Z}^k\longrightarrow$ Diff$^{r}(M, M)$ be a $C^r$ $\mathbb{Z}^k$-action on $M$ which satisfies the basic  assumption. Let  $\mathbf{v}=(v_1,\cdots,v_k)\in\mathbb{R}^k$ be regular. By Proposition \ref{case1} and Proposition \ref{case2}, $L_\mathbf{v}\in\mathbb{A}_1(\alpha, \Gamma)$. Hence by (2) of Proposition \ref{BP3}, we have that for each $1\le i\le k$,
$$
\mathbb{A}_i(\alpha, \Gamma)=\{V\in \mathbb{G}_i : V \text{ contains a regular vector} \}.
$$
\end{proof}

A classical property for a diffeomorphism on a hyperbolic set is \emph{structural stability} (see Theorem 4.19 of \cite{Wen}, for example). Precisely, let $g:M\longrightarrow M$ be a diffeomorphism and $\Lambda\subset M$ a hyperbolic set of $g$. Then for any diffeomorphism $g'$ which is $C^1$ close to $g$, then there exists a homeomorphism $h$ from $\Lambda$ onto its image such that $g'\circ h=h\circ g$ and $h$ is $C^0$ close to $id$. There are two classical methods to prove this result, one of them is to apply the shadowing property and expansivity of $g$ on $\Lambda$. Hence by Theorem A, we have the following property concerning stability for $\alpha$.

\begin{proposition}\label{stability}
Let $\alpha:\mathbb{Z}^k\longrightarrow$ Diff$^{r}(M, M)$ be a $C^r$ $\mathbb{Z}^k$-action on $M$ which satisfies the basic assumption. Let $\mathbf{v}\in \mathbb{Z}^k$ is a rational regular vector, then $\alpha$ is structurally stable along $L_{\mathbf{v}}$ in the following sense:
for any smooth $\mathbb{Z}^k$-action $\alpha'$ which is $C^1$ close to $\alpha$, then there exists a homeomorphism $h$ from $\Lambda$ onto its image such that $(\alpha')^{\mathbf{n}}\circ h=h\circ \alpha^{\mathbf{n}}$ for any $\mathbf{n}\in L_{\mathbf{v}}\cap \mathbb{Z}^k\setminus \{\mathbf{0}\}$, and moreover, $h$ is $C^0$ close to $id$.
\end{proposition}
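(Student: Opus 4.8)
The plan is to reduce the assertion to the classical structural stability of a genuinely hyperbolic diffeomorphism and then to upgrade the resulting conjugacy from a power of $\alpha$ to the primitive generator of $L_{\mathbf{v}}\cap\mathbb{Z}^k$ by a uniqueness argument. Recall from the proof of Proposition \ref{case1} that, since $\mathbf{v}\in\mathbb{Z}^k$ is rational and regular, $L_{\mathbf{v}}$ meets $\mathbb{Z}^k$ in the cyclic group $\{l\mathbf{n}^\diamond:l\in\mathbb{Z}\}$ generated by the primitive vector $\mathbf{n}^\diamond$, and there is an integer $l^*$ with $\mathbf{n}^*=l^*\mathbf{n}^\diamond$ such that $g:=\alpha^{\mathbf{n}^*}$ is hyperbolic on $\Gamma$ with splitting $T_\Gamma M=E^s\oplus E^u$. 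Thus $\Gamma$ (the Oseledec set, playing the role of $\Lambda$ in the statement) is a hyperbolic set of the single diffeomorphism $g$, and this is where we work.

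First I would invoke the classical structural stability theorem for hyperbolic sets (Theorem 4.19 of \cite{Wen}): since $\alpha'$ is $C^1$-close to $\alpha$, the diffeomorphism $g':=(\alpha')^{\mathbf{n}^*}$ is $C^1$-close to $g$, so there is a homeomorphism $h$ from $\Gamma$ onto its image with $g'\circ h=h\circ g$ and $h$ is $C^0$-close to $\id$. The essential extra input I would extract from this theorem, which is exactly the route through shadowing and expansiveness mentioned before the proposition, is the \emph{uniqueness} of such an $h$: because $g'$ is expansive on the persistent hyperbolic set with an expansiveness constant $\rho'>0$ (close to that of $g$), any two intertwining maps that are $C^0$-closer than $\rho'/2$ to the inclusion must coincide, since for each $x$ the two $g'$-orbits both stay within $\rho'/2$ of $\{g^n(x)\}$, hence within $\rho'$ of each other, whence expansiveness forces them equal.

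The heart of the argument is to promote the conjugacy to $\alpha^{\mathbf{n}^\diamond}$. Set $h':=(\alpha')^{-\mathbf{n}^\diamond}\circ h\circ\alpha^{\mathbf{n}^\diamond}$. Using that $\alpha$ and $\alpha'$ are abelian (so $\alpha^{\mathbf{n}^\diamond}$ commutes with $\alpha^{\mathbf{n}^*}$, and $(\alpha')^{\mathbf{n}^\diamond}$ with $(\alpha')^{\mathbf{n}^*}$) together with $g'\circ h=h\circ g$, a direct computation gives $g'\circ h'=h'\circ g$; that is, $h'$ again conjugates $g$ to $g'$. Moreover $\alpha^{\mathbf{n}^\diamond}(\Gamma)=\Gamma$ since $\Gamma$ is $\alpha$-invariant, so $h'$ is defined on $\Gamma$; and because $h$ is $C^0$-close to $\id$ while $(\alpha')^{-\mathbf{n}^\diamond}$ is $C^0$-close to $\alpha^{-\mathbf{n}^\diamond}$ (both uniformly continuous on the compact $M$), one checks $\sup_x d(h'(x),x)$ is small. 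Hence, shrinking the neighborhood of $\alpha$ if necessary, $h'$ lies within the uniqueness threshold, so $h'=h$, which is precisely $h\circ\alpha^{\mathbf{n}^\diamond}=(\alpha')^{\mathbf{n}^\diamond}\circ h$.

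Finally, iterating this identity yields $h\circ\alpha^{l\mathbf{n}^\diamond}=(\alpha')^{l\mathbf{n}^\diamond}\circ h$ for every $l\in\mathbb{Z}$, and since $L_{\mathbf{v}}\cap\mathbb{Z}^k\setminus\{\mathbf{0}\}=\{l\mathbf{n}^\diamond:l\in\mathbb{Z},\,l\neq0\}$, we obtain $(\alpha')^{\mathbf{n}}\circ h=h\circ\alpha^{\mathbf{n}}$ for every $\mathbf{n}\in L_{\mathbf{v}}\cap\mathbb{Z}^k\setminus\{\mathbf{0}\}$, with $h$ a homeomorphism onto its image and $C^0$-close to $\id$. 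I expect the main obstacle to be this bootstrap step: one must apply the commutation identities correctly for the abelian actions and, more delicately, control the perturbation so that $h'$ stays inside the expansiveness threshold guaranteeing uniqueness. The decision to reduce to the hyperbolic power $\alpha^{\mathbf{n}^*}$, rather than attempting to make $\alpha^{\mathbf{n}^\diamond}$ itself hyperbolic, is what lets the classical theorem apply verbatim and supplies the uniqueness on which the whole argument turns.
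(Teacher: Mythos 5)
Your proof is correct, and its foundation is the same as the paper's: the paper ``proves'' Proposition \ref{stability} merely by citing the classical structural stability theorem for hyperbolic sets (Theorem 4.19 of \cite{Wen}), remarking that one proof of that theorem goes through shadowing and expansiveness, and writing ``Hence by Theorem A''; implicitly this is applied to the hyperbolic diffeomorphism $g=\alpha^{\mathbf{n}^*}$ constructed in the proof of Proposition \ref{case1}, exactly as you do. Where you go genuinely beyond the paper is the bootstrap step: the classical theorem only produces a conjugacy for the single power $\alpha^{\mathbf{n}^*}=\alpha^{l^*\mathbf{n}^\diamond}$, whereas the statement demands one homeomorphism $h$ intertwining $(\alpha')^{\mathbf{n}}$ with $\alpha^{\mathbf{n}}$ simultaneously for every $\mathbf{n}\in L_{\mathbf{v}}\cap\mathbb{Z}^k=\{l\mathbf{n}^\diamond:l\in\mathbb{Z}\}$, and the paper is silent on how to pass from the former to the latter. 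Your uniqueness-plus-commutativity argument --- $h':=(\alpha')^{-\mathbf{n}^\diamond}\circ h\circ\alpha^{\mathbf{n}^\diamond}$ again conjugates $g$ to $g'$, is $C^0$-close to the inclusion, hence equals $h$ by expansiveness of $g'$ --- is exactly the standard mechanism for upgrading stability of one hyperbolic element to stability of an abelian action, and it is in effect the content of the Katok--Nitica result (Theorem 8.1.18 of \cite{Katok1}) that the paper cites only as ``a similar stability result.'' Two small points you should make explicit to be fully rigorous: first, the set $\Lambda$ in the statement must be read as the Oseledec set $\Gamma$, which need not be compact, so before invoking Theorem 4.19 of \cite{Wen} one should extend the hyperbolic structure to $\overline{\Gamma}$, as the paper does near (\ref{angle}) via Theorem 4.3 of \cite{Wen}; second, in the uniqueness step one needs the $g'$-orbits of $h(x)$ and $h'(x)$ to lie in a set where the expansiveness constant $\rho'$ is valid --- this holds because $h(\Gamma)$ and $h'(\Gamma)$ are $g'$-invariant and contained in a small neighborhood of $\overline{\Gamma}$, hence in the maximal $g'$-invariant (persistent hyperbolic) set there, and because the moduli of continuity of $(\alpha')^{-\mathbf{n}^\diamond}$ are uniform over a $C^1$-neighborhood of $\alpha$. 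Neither point affects the validity of your argument; your write-up is in fact more complete than the paper's.
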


We can see Theorem 8.1.18 of \cite{Katok1} for a similar stability result for smooth actions of abelian groups.

\subsection{\texorpdfstring{Quasi-shadowing for $\alpha$ along 1-dimensional subspace containing singular vectors (Proof of Theorem B)}{Quasi-shadowing for α along 1-dimensional subspace containing singular vectors (Proof of Theorem B)}}

In this subsection, we will consider the shadowing property for $\alpha$ along 1-dimensional subspaces of $\mathbb{R}^k$ which contains singular vectors. Let $\mathbf{v}\in\mathbb{R}^k$ be a singular vector, recall that we say $v$ is a first-type singular vector if there exist at least two indexes $j,j'\in \{1,\cdots,s\}$ such that
$
\sum_{i=1}^kv_i\lambda_{i,j}=0\text{ and }\sum_{i=1}^kv_i\lambda_{i,j'}\neq 0.
$
We say $v$ is a \emph{second-type} singular vector if for all $j\in \{1,\cdots,s\}$,
$
\sum_{i=1}^kv_i\lambda_{i,j}=0.
$

\begin{example}
For the ${\mathbb{Z}}^2$-action $\alpha$ on the torus $\mathbb{T}^2$ in Example \ref{example3}, let $L_1$ be the line in $\mathbb{G}_1$ with slope 1. Then any nonzero vector in $L_1$ is a second-type singular vector, and any $\mathbf{v}\in \mathbb{R}^2\setminus L_1$ is regular.

 For the ${\mathbb{Z}}^2$-action $\alpha$ on the torus $\mathbb{T}^3$ in Example \ref{example3.5}, let $L_1$ be the line in $\mathbb{G}_1$ with slope 1 as above. Then any nonzero vector in $L_1$ is a second-type singular vector, any $\mathbf{v}\in \mathbb{R}^2\setminus L_1$ is a first-type singular  vector, and hence there is no regular vector.

  Moreover, for the ${\mathbb{Z}}^2$-action $\alpha$ on the torus $\mathbb{T}^4$ in Example \ref{example4}, let $L_i$ be the line in ${\mathbb{R}}^2$ with slope $-\log |\lambda_i|= \log |\mu_1|, i=1, 2$. Then any nonzero vector in $L_1\cup L_2$ is a first-type singular vector, and any $\mathbf{v}\in \mathbb{R}^2\setminus (L_1\cup L_2)$ is regular.
\end{example}

Let  $\mathbf{v}=(v_1,\cdots,v_k)\in\mathbb{R}^k$ be a  first-type singular vector of $\alpha$,
by (\ref{1singular}), we can write the set $\{1,\cdots,s\}$ into a disjoint union of the following three subsets
\begin{equation}\label{j1j2j3}
J_{1}=\{j:\sum_{i=1}^kv_i\lambda_{i,j}<0\}, J_{2}=\{j:\sum_{i=1}^kv_i\lambda_{i,j}>0\}\;\;\text{ and } J_{3}=\{j:\sum_{i=1}^kv_i\lambda_{i,j}=0\}.
\end{equation}
For $x\in \Gamma$, define
\begin{equation}\label{EsEuEc}
E^s(x)=\bigoplus_{j\in J_{1}}E_j(x),\;E^u(x)=\bigoplus_{j\in J_{2}}E_j(x)\;\text{ and }\;E^c(x)=\bigoplus_{j\in J_{3}}E_j(x).
\end{equation}
Then we get an invariant splitting  $T_\Gamma M=E^s\bigoplus E^c \bigoplus E^u$.

Since there is a center distribution $E^c$ in the above splitting, we can not expect that $\alpha$ has the shadowing property along $L_\mathbf{v}$. However, we can consider the so-called quasi-shadowing property instead.

\begin{definition}\label{QS}
 Let $\mathbf{v}=(v_1,\cdots,v_k)\in\mathbb{R}^k$ be a  first-type singular vector of $\alpha$,  $T_\Gamma M=E^s\bigoplus E^c \bigoplus E^u$ be the splitting defined as above.
We say that $L_\mathbf{v}$ \emph{has the  quasi-shadowing property on $\Gamma$ for $\alpha$} provided  there exists  $t>0$ satisfying the following property: for any $\varepsilon>0$ there exists $\delta>0$ such that every $\delta$-pseudo orbit $\xi=\{x_{\mathbf{n}} : \mathbf{n}\in  L_\mathbf{v}^t\cap\mathbb{Z}^k\}$ of $L_\mathbf{v}^t$ for $\alpha$ in $\Gamma$, there is a set of points $\{y_{\mathbf{n}} : \mathbf{n}\in  L_\mathbf{v}^t\cap\mathbb{Z}^k\}$ $\varepsilon$-tracing it in which $y_{\mathbf{n'}}, \mathbf{n}'\in  L_\mathbf{v}^t\cap\mathbb{Z}^k$, is obtained from $\alpha^{\mathbf{n'}-\mathbf{n}}(y_{\mathbf{n}})$ by a motion $\tau$
along the center direction (please see that in Lemma \ref{mainlemma2}, for the meaning of  the motion $\tau$).
\end{definition}

 We can also see other shadowing properties which are analogous to the quasi-shadowing property in different settings in  \cite{Carrasco}, \cite{Tikhomirov} and \cite{Bonatti}, etc.

\begin{remark}
Let $\alpha$ be a smooth $\mathbb{Z}^k$-action on $M$  as in Theorem A and B.

(1)  From Theorem A and Theorem B we can see that once a subspace $V$ of $\mathbb{R}^k$ contains a regular vector, then $V\in A(\alpha, \Gamma)$ although there may be some (even many) singular vectors lying in $V$. This reflects that by the commutativity of the generators of $\alpha$ the shadowing property of $\alpha$ along $V$ determines by weather $V$ contains a regular direction.

(2) A natural question is: how about the 1-dimensional subspace $V$ which contains a  second-type singular vector? In some particular cases, such as ``linear"-actions on torus, $V$ does not have shadowing property.  For example, in Example \ref{example3}, $L_1$ has no shadowing property, where $L_1$ is the line in $\mathbb{G}_1$ with slope 1. However, we do not know weather there is an example which has the shadowing property along a 1-dimensional subspace $V$ which contains a  second-type singular vector.
\end{remark}

We will apply the similar method as in Theorem A to prove Theorem B. We need to introduce the property of quasi-shadowing property for both autonomous and nonautonomous partially hyperbolic diffeomorphisms.

Let $\{g_p\}_{p\in\mathbb{Z}}$ be a sequence of diffeomorphisms of $M$ and  $\mathcal{G}$ the (usually nonautonomous) system generated by $\{g_p\}_{p\in\mathbb{Z}}$. In particular, when $g_p=g$ for any $p\in\mathbb{Z}$, then $\mathcal{G}$ is the autonomous system generated by $g$.

\begin{definition}
Let $\mathcal{G}$ be the system generated by $\{g_p\}_{p\in\mathbb{Z}}$. We call that $\Lambda\subset M$  is a \emph{partially hyperbolic set} of $\mathcal{G}$ if it is $\mathcal{G}$-invariant and there exist an invariant splitting $T_{\Lambda}M=E^{s}\bigoplus E^{c}\bigoplus E^{u}$ and constants $0<\lambda<\mu<1$ such that for $p\in\mathbb{Z}, m\ge 0$,
$$
\|Dg_p^m(x)v\|\leq \lambda^m\|v\|\;\; \mbox{for}\;\; v\in E^s(x),
$$
$$
\mu^m\|v\|\leq\|Dg_p^m(x)v\|\leq \mu^{-m}\|v\|\;\; \mbox{for}\;\; v\in E^c(x)
$$
and
$$
\|Dg_p^m(x)v\|\geq \lambda^{-m}\|v\| \;\; \mbox{for}\;\; v\in E^u(x).
$$
We call the numbers $\lambda$ and $\mu$ the \emph{partial hyperbolicity constants} of $\mathcal{G}$
on $\Lambda$.
\end{definition}

 By a similar discussion as in Theorem 4.3 of \cite{Wen}, for the partially hyperbolic set $\Lambda$ of $\mathcal{G}$, the corresponding decomposition $T_{\Lambda}M=E^s\bigoplus E^{c}\bigoplus E^{u}$ is continuous, and moreover, it can be continuously extended to the closure  of $\Lambda$ such that $\mathcal{G}_{\overline{\Lambda}}$ is also partially hyperbolic. By this observation, we have that there exists a positive number $r$
 \begin{equation}\label{angle1}
 \sup_{x\in \Lambda} \max\{\angle(E^i(x),E^j(x)):i,j=s,c,u,i\neq j\}\ge r.
 \end{equation}

For a sequence of points $\{x_p\}_{p\in \mathbb{Z}}\subset \Lambda$
and a sequence of vectors $\{u_p\in E^c_{x_p}\}_{p\in \mathbb{Z}}$
with $\|u_p\|$ is sufficiently small for any $p\in \mathbb{Z}$,
we define a family of smooth maps $\tau_{x_p}=\tau_{x_p}(\cdot, u_p)$
on a small neighborhood of $x_p$, as in \cite{Hu2} and \cite{Hu1},  by
\begin{equation}\label{tau}
\tau_{x_p}(y)=\exp_{x_p}(u_p+\exp_{x_p}^{-1}y).
\end{equation}

 \begin{lemma}\label{mainlemma2}
Let $\{g_p\}_{p\in\mathbb{Z}}$ be a sequence of diffeomorphisms of $M$, $\mathcal{G}$ be the induced  system and $\Lambda$ be $\mathcal{G}$-invariant. If $\Lambda\subset M$ is partially hyperbolic for $\mathcal{G}$, then $\mathcal{G}$ has the
quasi-shadowing property on $\Lambda$ in the following sense: for any $\varepsilon>0$ there exists $\delta>0$ such that for any $\delta$-pseudo
orbit $\{x_p\}_{p\in \mathbb{Z}}$ of $\mathcal{G}$ in $\Lambda$, there exist a sequence of points $\{y_p\}_{p\in \mathbb{Z}}$ and
a sequence of vectors $\{u_p\in E^c_{x_p}\}_{p\in \mathbb{Z}}$
such that
\begin{equation}\label{eqn1thmA}
d(x_p,y_p)<\varepsilon,
\end{equation}
where
\begin{equation}\label{eqn2thmA}
y_p=\tau_{x_p}(g_{p-1}(x_{p-1})).
\end{equation}

Moreover, $\{y_p\}_{p\in \mathbb{Z}}$ and $\{u_p\}_{p\in \mathbb{Z}}$ can be chosen
uniquely so as to satisfy
\begin{equation}\label{eqn3thmA}
y_p\in \exp_{x_p}(E^s_{x_p}+E^u_{x_p}).
\end{equation}
\end{lemma}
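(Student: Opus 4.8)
The plan is to adapt the Lyapunov--Perron contraction argument of Part~1 of Lemma~\ref{mainlemma} to the partially hyperbolic setting, the essential new feature being that the center direction is \emph{projected out} of the shadowing equation and absorbed into the motions $u_p$. First I would localize the dynamics: writing $G_{p-1}:=\exp_{x_p}^{-1}\circ g_{p-1}\circ \exp_{x_{p-1}}$ for the representation of $g_{p-1}$ in exponential charts and $v_p:=\exp_{x_p}^{-1}(y_p)$ for the unknown displacements, the quasi-orbit relation of Definition~\ref{QS} --- each $y_p$ being the $g_{p-1}$-image of the preceding quasi-orbit point pushed by a center motion $\tau_{x_p}$ --- reads, in these charts, as $v_p = u_p + G_{p-1}(v_{p-1})$ with $u_p\in E^c_{x_p}$. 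The pseudo-orbit hypothesis gives $\|G_{p-1}(0)\|=\|\exp_{x_p}^{-1}(g_{p-1}(x_{p-1}))\|\le C\delta$, so I would split $G_{p-1}=G_{p-1}(0)+A_{p-1}(\cdot)+\Phi_{p-1}(\cdot)$ into its value at $0$, its linear part $A_{p-1}=DG_{p-1}(0)$ (which nearly preserves the invariant splitting), and a remainder $\Phi_{p-1}$ whose Lipschitz constant tends to $0$ with $\delta$, exactly as in Lemma~\ref{mainlemma}.

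The key step is to impose the normalization $\Pi^c_{x_p}v_p=0$ demanded by (\ref{eqn3thmA}). Applying $\Pi^c_{x_p}$ to the relation forces $u_p=-\Pi^c_{x_p}\bigl(G_{p-1}(v_{p-1})\bigr)$, which simultaneously \emph{defines} the center motions and \emph{removes} the center component from the equation; applying $\Pi^s_{x_p}+\Pi^u_{x_p}$ then leaves the reduced recursion $v_p=(\Pi^s_{x_p}+\Pi^u_{x_p})G_{p-1}(v_{p-1})$ living entirely on the bundle $E^s\oplus E^u$. Working in the Banach space $\mathfrak{X}^{su}=\{v=\{v_p\}:v_p\in E^s_{x_p}\oplus E^u_{x_p}\}$ with the sup norm, this is a fixed-point equation $v=A^{su}v+\Phi^{su}(v)+c$ in which the linear operator $A^{su}$ is \emph{hyperbolic} (contraction rate $\le\lambda$ on $E^s$, expansion rate $\ge\lambda^{-1}$ on $E^u$), so $I-A^{su}$ is boundedly invertible on the sequence space, $\|\Phi^{su}\|_{\mathrm{Lip}}$ is small, and $\|c\|\le C\delta$. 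The same technical contraction lemma used before (Lemma~2.5 of \cite{Wen}) then produces, for $\delta$ small, a unique fixed point $v^*$ with $\|v^*\|\le L\delta$.

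From $v^*$ I recover $y_p=\exp_{x_p}(v^*_p)$ and $u_p=-\Pi^c_{x_p}G_{p-1}(v^*_{p-1})$. The estimate $\|v^*_p\|\le L\delta$ gives $d(x_p,y_p)\le C\|v^*_p\|<\varepsilon$ once $\delta$ is chosen small, verifying (\ref{eqn1thmA}), while $\|u_p\|\le C(\|v^*_{p-1}\|+\delta)\le C'\delta$ keeps every motion $\tau_{x_p}$ well defined. The uniqueness of $\{y_p\}$ and $\{u_p\}$ subject to (\ref{eqn3thmA}) is exactly the uniqueness of the fixed point $v^*$ in $\mathfrak{X}^{su}$.

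The main obstacle is precisely the presence of the center bundle. In the purely hyperbolic Lemma~\ref{mainlemma} the operator $I-A$ is invertible on the whole tangent bundle, which is what makes genuine shadowing possible; here the center rates lie in $[\mu,\mu^{-1}]$ with $1$ not excluded, so $I-A$ need not be invertible on $E^c$ and exact shadowing must fail. The device that rescues the argument is to never solve along $E^c$ at all: the center discrepancy is thrown entirely into the corrections $u_p$, and one only inverts $I-A^{su}$ on the hyperbolic complement. Making this rigorous requires the uniform transversality of the splitting from (\ref{angle1}) to bound the projections $\Pi^s,\Pi^c,\Pi^u$ uniformly along the pseudo orbit, together with the domination gap $\lambda<\mu$ to guarantee that, for $\delta$ small, the perturbed linear part $A^{su}$ genuinely preserves hyperbolicity on $E^s\oplus E^u$ despite the small leakage caused by the splitting at $g_{p-1}(x_{p-1})$ differing from that at $x_p$.
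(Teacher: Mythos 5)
Your proposal is correct and follows essentially the same route as the paper, which (adapting Lemma 3.1 of \cite{Hu1} to the nonautonomous case) sets up the chart equation $v_p=u_p+\exp_{x_p}^{-1}\circ g_{p-1}\circ\exp_{x_{p-1}}(v_{p-1})$, splits off the hyperbolic linear part $A=A^s+A^u$, and solves by a contraction whose inverted block operator $Pw=-u+(\id_{\mathfrak{X}^{us}}-A)v$ encodes exactly your two reduced equations $u_p=-\Pi^c_{x_p}G_{p-1}(v_{p-1})$ and $v_p=(\Pi^s_{x_p}+\Pi^u_{x_p})G_{p-1}(v_{p-1})$. The only difference is presentational: you eliminate the center motions $u$ first and contract on $\mathfrak{X}^{su}$ alone, whereas the paper runs a single contraction jointly in $w=u+v$ on $\mathfrak{B}_1(\varepsilon)$, and both rely on the same ingredients (the angle bound, the invertibility of $\id-A^{su}$ from hyperbolicity, and smallness of the Lipschitz remainder).
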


\begin{proof}
Assume $\Lambda\subset M$ is a partially hyperbolic set for $\mathcal{G}$ with the invariant splitting $T_{\Lambda}M=E^{s}\bigoplus E^{c}\bigoplus E^{u}$ and the partial hyperbolicity constants $\lambda$ and $\mu$. We will adapt the techniques in proving that a partially hyperbolic diffeomorphism has the quasi-shadowing property (Theorem A of  \cite{Hu1}) to our case. We only give the outline of the proof, for the detailed estimation we refer to the proof of Theorem A in \cite{Hu1}.

Recall that $\|\cdot\|$ is the norm on $TM$. We define the norm
$\|\cdot\|_1$ on $TM$ by $\|w\|_1=\|u\|+\|v\|$ if $w=u+v\in T_xM$
with $u\in E^c_x$ and $v\in E^u_x\oplus E^s_x$. For any sequence $\{x_k\}_{k\in \mathbb{Z}}$ in $\lambda$, Denote
$$
\mathfrak{X}=\{w=\{w_p\}_{p\in \mathbb{Z}}:w_p\in T_{x_p}M, p\in \mathbb{Z}\},
$$
$$
\mathfrak{X}^c=\{u=\{u_p\}_{p\in \mathbb{Z}}:u_p\in E_{x_p}^c, p\in \mathbb{Z}\}
$$
and
$$
\mathfrak{X}^{us}=\{v=\{v_p\}_{p\in \mathbb{Z}}:v_p\in E_{x_p}^u\oplus E_{x_p}^s, p\in \mathbb{Z}\}.
$$
For any $w=u+v\in \mathfrak{X}$, where $u\in \mathfrak{X}^c$ and $v\in \mathfrak{X}^{us}$, we also define
$$
\|w\|=\sup_{p\in \mathbb{Z}}\|w_p\|
\text{ and }
\|w\|_1=\|u\|+\|v\|.
$$
By triangle inequality and the fact that the angles between $E^c$ and
$E^u\oplus E^s$ are uniformly bounded away from zero, i.e., (\ref{angle1}), we get that
$\|\cdot\|_1$ is equivalent to $\|\cdot\|$.

For any $\varepsilon>0$, we denote
$$
\mathfrak{B}(\varepsilon)=\{w\in \mathfrak{X}: \|w\|\le \varepsilon\},\qquad
\mathfrak{B}^{us}(\varepsilon)=\{w\in \mathfrak{X}^{us}: \|w\|\le \varepsilon\},
$$
$$
\mathfrak{B}_1(\varepsilon)=\{w\in \mathfrak{X}: \|w\|_1\le \varepsilon\}.
$$

We denote
$\Pi^{s}_x:T_{x}M\to E^{s}_x$ be the projection onto
$E^{s}_x$ along $E^c_x\oplus E^u_x$. $\Pi^{c}_x$ and $\Pi^{u}_x$ are
defined in a similar way.

Let $\{x_p\}_{p\in \mathbb{Z}}$ be a $\delta$-pseudo orbit  of
$\mathcal{G}$. To find a sequence of points $\{y_p\}_{p\in \mathbb{Z}}$ and
a sequence of vectors $\{u_p\in E^c_{x_p}\}_{p\in \mathbb{Z}}$
satisfying (\ref{eqn1thmA}), (\ref{eqn2thmA}) and (\ref{eqn3thmA}), we shall
try to solve the equations
\begin{equation}\label{feqn2ThmA}
y_p=\tau_{x_p}(g_{p-1}(x_{p-1})), \quad  p\in \mathbb{Z},
\end{equation}
for unknown $\{y_p\}_{p\in \mathbb{Z}}$ and $\{u_p\in E^c_{x_p}\}_{p\in \mathbb{Z}}$,
where $\tau_{x}$ is defined in (\ref{tau}).
Put $v_p=\exp_{x_p}^{-1}y_p, p\in\mathbb{Z}$.
Then the equations (\ref{feqn2ThmA}) can be written as
$$
v_p=\exp_{x_p}^{-1}\circ \tau_{x_p}( g_{p-1}\circ \exp_{x_{p-1}}v_{p-1}),
\quad  p\in \mathbb{Z}.
$$
By (\ref{tau}), the equations are equivalent to
\begin{equation}\label{eqn4ThmA}
v_p=u_p+\exp_{x_p}^{-1}\circ g_{p-1}\circ \exp_{x_{p-1}}v_{p-1},\quad p\in \mathbb{Z}.
\end{equation}

For sufficiently small $a>0$ define an operator $\beta: \mathfrak{B}^{us}(a)\to \mathfrak{X}$
and a linear operator $A: \mathfrak{B}^{us}(a)\to \mathfrak{X}^{us}$ by
\begin{equation}\label{beta4ThmA}
(\beta(v))_p=\exp_{x_p}^{-1}\circ g_{p-1}\circ \exp_{x_{p-1}}v_{p-1}
\end{equation}
and
\begin{equation}\label{A4ThmA}
(Av)_p=((A^s+A^u)v)_p=(A_{p-1}^s+A_{p-1}^u)v_{p-1},
\end{equation}
where
\begin{equation}
\begin{split}
A_{p-1}^s=\Pi_{x_p}^s\circ
D(\exp_{x_p}^{-1}\circ g_{p-1}\circ \exp_{x_{p-1}})(0)\circ \Pi_{x_{p-1}}^s, \\
A_{p-1}^u=\Pi_{x_p}^u\circ
D(\exp_{x_p}^{-1}\circ g_{p-1}\circ \exp_{x_{p-1}})(0)\circ \Pi_{x_{p-1}}^u.
\end{split}
\end{equation}

Let $\eta=\beta-A$.  By (\ref{beta4ThmA}) and (\ref{A4ThmA}),
(\ref{eqn4ThmA}) is equivalent to
$$
v=u+Av+\eta(v),
$$
or
$$
v-u-Av=\eta(v).
$$

Define a linear operator $P$ from a neighborhood of $0\in
\mathfrak{X}$ to $\mathfrak{X}$ by
\begin{equation}\label{PThmA}
Pw=-u+(\id_{\mathfrak{X}^{us}}-A)v,
\end{equation}
and then define an operator $\Phi$ from a neighborhood of $0\in
\mathfrak{X}$ to $\mathfrak{X}$ by
$$
\Phi(w)=P^{-1}\eta(v)
$$
for $w=u+v$ in the neighborhood of $0\in \mathfrak{X}$, where
$u\in \mathfrak{X}^c$ and $v\in \mathfrak{X}^{us}$.

Hence, (\ref{eqn4ThmA}) is equivalent to
\begin{equation}\label{feqn4ThmA}
\Phi(w)=w,
\end{equation}
namely, $w$ is a fixed point of $\Phi$.

Adapting the estimation in Lemma 3.1 of \cite{Hu1} to our case, we get that for any small $\varepsilon>0$ there exists $\delta=\delta(\varepsilon)$ such that for any
$\delta$-pseudo orbit $\{x_k\}_{k\in \mathbb{Z}}$, the operator
$\Phi: \mathfrak{B}_1(\varepsilon)\to \mathfrak{B}_1(\varepsilon)$ defined as above
is a contracting map, and therefore has a fixed
point in $\mathfrak{B}_1(\varepsilon)$.  Hence, (\ref{eqn4ThmA}) has a
unique solution.

This completes the proof of the lemma.
\end{proof}

 \begin{proof}[Proof of Theorem B]
Let $\mathbf{v}=(v_1,\cdots,v_k)\in\mathbb{R}^k$ be a first-type singular vector, i.e., there exist at least two indexes $j,j'\in \{1,\cdots,s\}$ such that (\ref{1singular}) holds. As before, we assume that each component $v_i$ in $\mathbf{v}$ is not $0$. We give the proof in two cases. Since the proof is an adaption from shadowing for regular case to quasi-shadowing for the first-type singular case, we only give the outline of the proof and leave the details to the readers.

\emph{Case 1}. $\mathbf{v}$ is rational.

By (\ref{1singular}), we can write the set $\{1,\cdots,s\}$ into a disjoint union of the three subsets $J_{1}, J_{2}$ and $J_{3}$ as in (\ref{j1j2j3}) and let $T_\Gamma M=E^s\bigoplus E^c \bigoplus E^u$ be the corresponding splitting defined by (\ref{EsEuEc}).
Let
$$
b_{\mathbf{v},1}=\max_{j\in J_1}\sum_{i=1}^kv_i(\lambda_{i,j}+ a), \;\;b_{\mathbf{v},2}=\min_{j\in J_2}\sum_{i=1}^kv_i(\lambda_{i,j}-a)
$$
and
$$
b_{\mathbf{v},3}=\max_{j\in J_3}\max\{\sum_{i=1}^kv_i(\lambda_{i,j}+ a), |\sum_{i=1}^kv_i(\lambda_{i,j}- a)|\}.
$$
Take $a>0$ small enough such that
\begin{equation}\label{b1b2b3}
b_{\mathbf{v},1}<0, b_{\mathbf{v},2}>0 \text{ and } b_{\mathbf{v},3}<\max\{b_{\mathbf{v},2},-b_{\mathbf{v},1}\}.
\end{equation}
Observe that, the gaps between any two of the three numbers $b_{\mathbf{lv},1}, b_{\mathbf{lv},2}$ and $b_{\mathbf{lv},3}$ increases while as $l>0$ increases.

By the basic assumption, for the above $a>0$ take $N>0$ such that when $|n|\ge N$, the inequalities in (\ref{Uniformity}) hold.
We take a nonzero vector $\mathbf{n}^*=(n^*_1,\cdots,n^*_k)\in L_\mathbf{v}\cap\mathbb{Z}^k$ such that $|n^*_i|>N$ for each $i=1,\cdots,k$.

We deduce that for the above $\mathbf{n}^*$, the diffeomorphism $g:=\alpha^{\mathbf{n}^*}$ on $M$ is partially hyperbolic on $\Gamma$ with the invariant splitting
$T_\Gamma M=E^s\bigoplus E^c \bigoplus E^u$
and the partial hyperbolicity constants $\lambda=e^{-b_{\mathbf{n}^*}}$, in which $b_{\mathbf{n}^*}=\min\{-b_{\mathbf{n}^*,1}, b_{\mathbf{n}^*,2}\}$, and $\mu=e^{-b_{\mathbf{n}^*,3}}$.

By Lemma \ref{mainlemma2}, we have that the above $\alpha^{\mathbf{n}^*}$ has the quasi-shadowing property.
Since the generators $f_i, 1\le j\le k$, of $\alpha$ are equi-continuous, we get that the diffeomorphism $\alpha^{\mathbf{n}^\diamond}$, where $\mathbf{n}^\diamond=(n^\diamond_1,\cdots,n^\diamond_k)$ is one of the two elements in  $L_\mathbf{v}\cap \mathbb{Z}^k\setminus \{\mathbf{0}\}$ with the smallest norm as in Proposition \ref{case1}, and  hence $L_\mathbf{v}$  has the quasi-shadowing property on $\Gamma$.

\emph{Case 2}. $\mathbf{v}$ is irrational.

Now we take some quantities as in Case1. Precisely, by (\ref{1singular}), we write the set $\{1,\cdots,s\}$ into a disjoint union $J_{1}\cup J_{2}\cup J_{3}$, where $J_1$ and $J_2$ are as in (\ref{j1j2j3}).
Take $a>0$ small enough such that the inequalities in (\ref{b1b2b3}) hold. By the basic assumption, for the above $a>0$ take $N=N(a)>0$ such that when $|n|\ge N$, the inequalities in (\ref{Uniformity}) hold.

Note that the vector $\mathbf{v}=(v_1,\cdots,v_k)$ is irrational and satisfies the condition (\ref{1singular}). We can take a positive number $t_0$ in $[\sqrt{k}, 2\sqrt{k})$ as the thickness of $L_\mathbf{v}$, and then choose the above $a>0$ small enough and the corresponding $N=N(a, t_0)$ large enough such that there exists a sequence $\{\mathbf{n}^{(p)}=(n^{(p)}_1,\cdots,n^{(p)}_k)\}_{p\in\mathbb{Z}}\in L^{t_0}_\mathbf{v}\cap \mathbb{Z}^k$ satisfying (\ref{N2N}), (\ref{condition1}), (\ref{condition2}) and the following inequality
$$
\lambda_3\le\min\{-\lambda_1,\lambda_2\},
$$
in which $\lambda_3$ is defined as the following positive number
$$
\sup_{p\in\mathbb{Z}} \max_{j\in J_3}\{ |\sum_{i=1}^k(n^{(p+1)}_i-n^{(p)}_i)(\lambda_{i,j}-a)|,\sum_{i=1}^k(n^{(p+1)}_i-n^{(p)}_i)(\lambda_{i,j}-a)\}.
$$

We get that the nonautonomous system $\mathcal{G}$ generated by the sequence $\{g_p=\alpha^{\mathbf{n}^{(p+1)}-\mathbf{n}^{(p)}}\}_{p\in\mathbb{Z}}$ of diffeomorphisms on $M$ is  partially hyperbolic on $\Gamma$ with the invariant splitting (\ref{EsEuEc})
and the partial hyperbolicity constants $\lambda=e^{-\min\{-\lambda_1, \lambda_2\}}$ and $\mu=e^{-\lambda_3}$.

By Lemma \ref{mainlemma2}, we have that the above $\mathcal{G}$ has the quasi-shadowing property.
By the equicontinuity of the generators $f_i, 1\le j\le k$, of $\alpha$, we get that $L_\mathbf{v}$  has the quasi-shadowing property on $\Gamma$.

This completes the proof of this theorem.
\end{proof}

An important property for a partially hyperbolic diffeomorphism is \emph{quasi-stability} (see \cite{Hu2}, for example). Precisely, let $g:M\longrightarrow M$ be a partially hyperbolic diffeomorphism. Then for any homeomorphism $g'$ which is $C^0$ close to $g$, then there
 exist a continuous map $h$ and a set of locally defined homeomorphisms $\{l_x:x\in M\}$ on $M$ ($l_x$ is a \emph{motion} along the center direction as in \cite{Hu1}) such that
$$
h\circ g=l_{f(\cdot)}\circ f\circ h,
$$
in which $h$ is $C^0$ close to $id$. Once the center foliation exists and is dynamically coherent with the stable and unstable foliations, then we can get the quasi-stability via quasi-shadowing property (\cite{Hu3}).
Here, we give the following property concerning quasi-stability for $\alpha$.

\begin{proposition}\label{stability2}
Let $\alpha:\mathbb{Z}^k\longrightarrow$ Diff$^{r}(M, M)$ be a $C^r$ $\mathbb{Z}^k$-action on $M$ which satisfies the basic assumption and $\Gamma=M$. Let $\mathbf{v}\in \mathbb{Z}^k$ be a first-type rational singular vector, then $\alpha$ is quasi-stable along $L_{\mathbf{v}}$ in the following sense:
for any smooth $\mathbb{Z}^k$-action $\alpha'$ which is $C^0$ close to $g$, then there
 exist a continuous map $h$ and a set of locally defined homeomorphisms $\{l_x:x\in M\}$ on $M$ such that
$$
h\circ (\alpha')^{\mathbf{n}}=l_{\alpha^{\mathbf{n}}(\cdot)}\circ \alpha^{\mathbf{n}}\circ h,
$$
for any $\mathbf{n}\in L_{\mathbf{v}}\cap \mathbb{Z}^k\setminus \{\mathbf{0}\}$, and moreover, $h$ is $C^0$ close to $id$.
\end{proposition}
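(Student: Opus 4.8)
The plan is to reduce the statement to a single partially hyperbolic diffeomorphism and then run the quasi-stability-via-quasi-shadowing scheme. Since $\mathbf{v}$ is rational, let $\mathbf{n}^\diamond$ be the generator of $L_{\mathbf{v}}\cap\mathbb{Z}^k$ with smallest norm, so that every $\mathbf{n}\in L_{\mathbf{v}}\cap\mathbb{Z}^k\setminus\{\mathbf{0}\}$ equals $l\mathbf{n}^\diamond$ for some $l\in\mathbb{Z}$. Exactly as in Case 1 of the proof of Theorem B, the uniformity in the basic assumption shows that $g:=\alpha^{\mathbf{n}^\diamond}$ is partially hyperbolic on $\Gamma=M$ with respect to the common Lyapunov splitting $T_M M=E^s\oplus E^c\oplus E^u$ of (\ref{EsEuEc}) (passing, if necessary, to the power $\alpha^{\mathbf{n}^*}=(\alpha^{\mathbf{n}^\diamond})^{l^*}$ to obtain the hyperbolicity constants, and using that this splitting is preserved by every $\alpha^{\mathbf{n}}$). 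By the autonomous case of Lemma \ref{mainlemma2}, $g$ then has the quasi-shadowing property on $M$.

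Next I would construct $h$. Given a smooth $\mathbb{Z}^k$-action $\alpha'$ that is $C^0$ close to $\alpha$, put $g'=(\alpha')^{\mathbf{n}^\diamond}$, a diffeomorphism $C^0$ close to $g$. For each $x\in M$ the sequence $\{(g')^p(x)\}_{p\in\mathbb{Z}}$ is a $\delta$-pseudo orbit of $g$, with $\delta\to 0$ as $\alpha'\to\alpha$. Applying the quasi-shadowing property of $g$ yields a quasi-tracing sequence $\{y_p\}_{p\in\mathbb{Z}}$ together with center corrections $\{u_p\in E^c\}$ as in (\ref{eqn1thmA})--(\ref{eqn3thmA}); I then set $h(x)=y_0$ and let $l_x$ be the center motion (in the sense of $\tau$ from (\ref{tau})) recorded at the base point. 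By construction $h$ is $C^0$ close to $\id$, and the single-step relation $h\circ g'=l_{g(\cdot)}\circ g\circ h$ is precisely the reformulation of (\ref{eqn2thmA}) with $x_p=(g')^p(x)$.

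To pass from the generator to an arbitrary $\mathbf{n}=l\mathbf{n}^\diamond$, I would iterate: $\alpha^{\mathbf{n}}=g^{l}$ and $(\alpha')^{\mathbf{n}}=(g')^{l}$, and composing the single-step relation $l$ times, using that a composition of center motions along the (dynamically coherent) center foliation is again a center motion and that the generators of $\alpha'$ commute, produces a motion with $h\circ(\alpha')^{\mathbf{n}}=l_{\alpha^{\mathbf{n}}(\cdot)}\circ\alpha^{\mathbf{n}}\circ h$. Rewriting the composite motion again as a single family $\{l_x\}$ gives the asserted relation for every $\mathbf{n}\in L_{\mathbf{v}}\cap\mathbb{Z}^k\setminus\{\mathbf{0}\}$.

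The hard part will be the well-definedness, continuity, and essential uniqueness of $h$, together with the consistent organization of the center corrections into the motions $\{l_x\}$. Because expansivity holds only transverse to $E^c$, the quasi-tracing sequence is unique merely \emph{up to} a motion along the center, so the passage from the pointwise construction to a genuine continuous map $h$ and a coherent family $\{l_x\}$ is not automatic; it requires that $E^c$ be integrable and dynamically coherent with $E^s$ and $E^u$. Under the uniform partial hyperbolicity supplied by the basic assumption this coherence is available, and the remaining argument is an adaptation of the quasi-stability scheme of \cite{Hu3}; I would carry out the continuity and uniqueness estimates by the same contraction-mapping argument already used in Lemma \ref{mainlemma2}, now applied fiberwise over the perturbed orbits.
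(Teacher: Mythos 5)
You should first know that the paper contains no proof of Proposition \ref{stability2} at all: it is stated as an immediate consequence of the partial hyperbolicity of $\alpha^{\mathbf{n}}$ for $\mathbf{n}\in L_{\mathbf{v}}\cap\mathbb{Z}^k\setminus\{\mathbf{0}\}$ (established in Case 1 of the proof of Theorem B) together with the quasi-stability theory for partially hyperbolic diffeomorphisms cited from \cite{Hu2} and \cite{Hu3}. Your reduction to the generator $g=\alpha^{\mathbf{n}^\diamond}$, the construction of $h$ by quasi-shadowing the pseudo orbits $\{(g')^p(x)\}_{p\in\mathbb{Z}}$, the appeal to the normalization (\ref{eqn3thmA}) for uniqueness (and hence shift-equivariance, giving the one-step relation), and the iteration to general $\mathbf{n}=l\mathbf{n}^\diamond$ is exactly the route the paper intends; you have simply written out details the paper delegates to the references. (You also silently corrected two typos, and rightly so: the proposition's ``$C^0$ close to $g$'' should read ``$C^0$ close to $\alpha$'', and (\ref{eqn2thmA}) should read $y_p=\tau_{x_p}(g_{p-1}(y_{p-1}))$, as in Theorem A of \cite{Hu1}.)

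There is, however, one genuine flaw in your final paragraph: the claim that ``under the uniform partial hyperbolicity supplied by the basic assumption this coherence is available'' is false. Uniform partial hyperbolicity does not imply that $E^c$ is integrable, let alone dynamically coherent; there are partially hyperbolic diffeomorphisms with non-integrable center bundle, and nothing in the basic assumption rules this out for $E^c=\bigoplus_{j\in J_3}E_j$. Fortunately, coherence is also unnecessary for the statement you are proving: the motions $l_x$ demanded by the proposition are only locally defined homeomorphisms of the form $\tau_x(y)=\exp_x(u+\exp_x^{-1}y)$ with $u\in E^c_x$, as in (\ref{tau}) -- translations by center \emph{vectors} in exponential charts, not motions along center \emph{leaves}. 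With that reading, the well-definedness, continuity and essential uniqueness of $h$ follow from the uniform contraction argument of Lemma \ref{mainlemma2} together with its uniqueness clause (\ref{eqn3thmA}); this is precisely the coherence-free quasi-stability scheme of \cite{Hu2}, and the route through \cite{Hu3} is needed only if one insists the motions lie along an actual center foliation. You should therefore replace the coherence claim by this argument. A smaller point: when you compose the one-step relation $l$ times, a composition of maps of the form $\tau\circ g$ is not again of the form (\ref{tau}) at the terminal point, so the resulting family of motions genuinely depends on $\mathbf{n}$; the cleanest fix is to derive, for each $\mathbf{n}$, the relation $h\circ(\alpha')^{\mathbf{n}}=l^{(\mathbf{n})}_{\alpha^{\mathbf{n}}(\cdot)}\circ\alpha^{\mathbf{n}}\circ h$ directly from the unique normalized quasi-shadowing sequence (the same $h$ works for all $\mathbf{n}$ by uniqueness), accepting that the family of motions varies with $\mathbf{n}$ -- which is how the proposition's loosely worded conclusion must be read in any case.
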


\section{\texorpdfstring{A particular $\mathbb{R}^k$-action: suspension of a $\mathbb{Z}^k$-action}{A particular R\^{}k-action: suspension of a Z\^{}k-action}}

In previous sections, we investigated the shadowing property for subsystems of $\mathbb{Z}^k$-actions. Since a subspace $V$ of $\mathbb{R}^k$ may be ``invisible"
within $\mathbb{Z}^k$, we use the technique of thickening $V$ by a positive number $t$ to make $V^t$ is ``visible" within $\mathbb{Z}^k$. An alternative technique is to take the  suspension of a $\mathbb{Z}^k$-action, as in \cite{Kalinin1} for example.

Generally, the corresponding notions, such as  Lyapunov exponents, Lyapunov spectrum, Lyapunov hyperplanes and Weyl chambers, for a smooth $\mathbb{R}^k$ action are defined similar to that for smooth $\mathbb{Z}^k$ actions. We note that any $\mathbb{R}^k$ action has $k$ identically zero Lyapunov exponents corresponding to the
orbit directions. These Lyapunov exponents are called \emph{trivial} and the other
ones are called \emph{nontrivial}. When calling a Lyapunov exponent of
an $\mathbb{R}^k$ action we usually mean a nontrivial one.

For a smooth $\mathbb{Z}^k$-action $\alpha$ on $M$, there  is a natural $\mathbb{R}^k$ action on the suspension manifold $S$, which is a bundle
over $\mathbb{T}^k$ with fiber $M$. Namely, let $\mathbb{Z}^k$ act on $\mathbb{R}^k\times M$ by $\mathbf{n}(\mathbf{u}, x)=(\mathbf{u}-\mathbf{n}, \alpha^{\mathbf{n}}(x))$ and form the quotient space
$$
S = \mathbb{R}^k\times M/\mathbb{Z}^k
$$
Note that the action $\beta$ of $\mathbb{R}^k$ on $\mathbb{R}^k\times M$ by $\mathbf{n}(\mathbf{u}, x)=(\mathbf{n}+\mathbf{u}, x)$ commutes with the
$\mathbb{Z}^k$-action and therefore descends to $S$. This $\mathbb{R}^k$-action is called the \emph{suspension}
of the $\mathbb{Z}^k$-action and is denoted by  $\tilde{\alpha}$. There is a natural correspondence between the invariant
measures, nontrivial Lyapunov exponents, Lyapunov distributions, stable and
unstable manifolds, etc. for  $\alpha$ and  $\tilde{\alpha}$.

 Let $\mu$ be an ergodic probability measure for $\alpha$ and $\Gamma$ the Oseledec set.  We have the following property for $\tilde{\alpha}$:
for each $\mathbf{v}=(v_1,\cdots,v_k)\in \mathbb{R}^k$ and each
 $1\le j\le s(x)$,
\begin{equation}\label{Lya3}
\lim_{t\longrightarrow \pm \infty}\frac{1}{t}\log\|\frac{\partial\tilde{\alpha}^{t\mathbf{v}}(\mathbf{u},x)}{\partial x}v\|=\sum_{i=1}^kv_i\lambda_{i,j}(x)
\end{equation}
for any $(\mathbf{u},x)\in S, x\in \Gamma$ and $ 0\neq v\in E_j(x)$, where $\pi_M$ is the projection from $S$ to $M$.

Now we can define the shadowing property and expansiveness for $\tilde{\alpha}$ on $\Gamma$ along a subspace $V$ of $\mathbb{R}^k$  as follows. Put
$$
d_{\tilde{\alpha}}^V (x, y) = \sup\{d(\pi_M\tilde{\alpha}^{\mathbf{v}}(\mathbf{0},x),\pi_M\tilde{\alpha}^{\mathbf{v}}(\mathbf{0},y)) : \mathbf{v}\in  V \},\;x,y\in M.
$$

\begin{definition}
We say that a subspace $V$ of $\mathbb{R}^k$ is \emph{expansive} on $\Gamma$ for $\tilde{\alpha}$ if there is an \emph{expansive constant} $\rho_V>0$ such that for any $x,y\in \Gamma$, $d_{\tilde{\alpha}}^V(x, y)\le \rho_V$ implies that $x=y$.  In particular, when $V=\mathbb{R}^k$, we say that $\tilde{\alpha}$ is \emph{expansive} on $\Gamma$.
\end{definition}

To give a reasonable definition of pseudo orbit for the subsystems of $\tilde{\alpha}$, we focus on 1-dimensional subspace of $\mathbb{R}^k$. Let $V$ be a 1-dimensional subspace of $\mathbb{R}^k$, then the restriction of $\tilde{\alpha}$ to $V$ naturally induce a flow on $S$, we denote it by $\tilde{\alpha}_V$. Let $\delta,a>0$. We say that a sequences $\xi=\{(\mathbf{u}_p, x_p), \mathbf{v}_p\}_{p\in \mathbb{Z}}$ is a \emph{$(\delta,a)$-chain along $V$} for $\tilde{\alpha}$  on $\Gamma$  if $(\mathbf{u}_p, x_p)\in S$ with $x_p\in \Gamma$, $\mathbf{v}_p\in V$ with $|\mathbf{v}_p|>a$ and
$$
\sup_{p\in \mathbb{Z}}d(\pi_M\tilde{\alpha}^{\mathbf{v}_p}(\mathbf{u}_p, x_p),  x_{p+1}) <\delta.
$$
Let $\varepsilon>0$, a point $x\in M$ is called $\varepsilon$-\emph{shadows} the above pseudo orbit $\xi$ if
$$
\sup_{p\in \mathbb{Z}}d(x_p, \pi_M\tilde{\alpha}^{\sum_{i=0}^{p-1}\mathbf{v}_j}(\mathbf{0}, x))\le \varepsilon.
$$

\begin{definition}
Let $V$ be a 1-dimensional subspace of $\mathbb{R}^k$.
 We say that $\tilde{\alpha}$ \emph{has the shadowing property along $V$ on $\Gamma$} (or say, the flow $\tilde{\alpha}_V$ \emph{has the shadowing property
 on $\Gamma$}) provided for any $\varepsilon>0$ there exists $\delta>0$ such that for any $a>0$ and  every $(\delta,a)$-chain  $\xi=\{(\mathbf{u}_p, x_p), \mathbf{v}_p\}_{p\in \mathbb{Z}}$ along $V$ for $\tilde{\alpha}$  on $\Gamma$, there exists some point $x\in X$ $\varepsilon$-shadowing it. We will use the notion ``quasi-shadowing property" for $\tilde{\alpha}$, which is analogous to that in Lemma \ref{mainlemma2}.
\end{definition}

 \begin{proof}[Proof of Theorem C]
 By Theorem A and Theorem B, we get  Theorem C immediately.
\end{proof}

\section*{Acknowledgements}

The authors would like to thank the referees for the detailed review and very valuable suggestions which led to improvements of the paper.

\end{document}